\let\orgdescriptionlabel\descriptionlabel
\renewcommand*{\descriptionlabel}[1]{%
  \let\orglabel\label
  \let\label\@gobble
  \phantomsection
  \edef\@currentlabel{#1}%
  \let\label\orglabel
  \orgdescriptionlabel{#1}%
}
\renewenvironment{description}[0]
{\list{}{\labelwidth=0pt \leftmargin=0.7cm
    }}
{\endlist}
\numberwithin{equation}{section}
\theoremstyle{definition}
\newtheorem{definition}{Definition}[section]
\theoremstyle{plain}
\newtheorem{lemma}[definition]{Lemma}
\newtheorem{sublemma}[definition]{Sublemma}
\newtheorem{theorem}[definition]{Theorem}
\newtheorem*{theorem*}{Theorem}
\newenvironment{manualtheorem}[1]{%
  \manualtheoreminner
}{\endmanualtheoreminner}
\newtheorem{proposition}[definition]{Proposition}
\theoremstyle{plain}
\newtheorem{corollary}[definition]{Corollary}
\theoremstyle{remark}
\newtheorem{remark}[definition]{Remark}
\theoremstyle{definition}
\theoremstyle{plain}
\newcommand{\Leb}{\operatorname{Leb}}
\newcommand{\Lip}{\operatorname{Lip}}
\newcommand{\Int}{\operatorname{Int}}
\newcommand{\Card}{\operatorname{Card}}
\newcommand{\eps}{\varepsilon}
\newcommand{\quand}{\quad\text{ and } \quad}
\newcommand{\cB}{{\mathcal{B}}}
\newcommand{\cE}{{\mathcal{E}}}
\newcommand{\cF}{{\mathcal{F}}}
\newcommand{\cL}{{\mathcal{L}}}
\newcommand{\cM}{{\mathcal{M}}}
\newcommand{\cP}{{\mathcal{P}}}
\newcommand{\cS}{{\mathcal{S}}}
\newcommand{\cT}{{\mathcal{T}}}
\newcommand{\hZ}{{\widehat{Z}}}
\newcommand{\R}{{\mathbb R}}
\newcommand{\Z}{{\mathbb Z}}
\newcommand{\N}{{\mathbb N}}
\newcommand{\E}{{\mathbb E}}
\newcommand{\PP}{{\mathbb P}}
\begin{document}

\title[Natural measures and statistical properties of non-statistical maps]{Natural measures and statistical properties of \\ non-statistical maps with multiple neutral fixed points}

\author{Douglas Coates}
\address{Instituto de Matemática, Universidade Federal do Rio Janeiro, Brazil}
\email{coates@im.ufrj.br }
\urladdr{https://douglascoates.github.io}

\author{Ian Melbourne}
\address{Mathematics Institute, University of Warwick, Coventry CV4 7AL, United Kingdom}
\email{i.melbourne@warwick.ac.uk}
\urladdr{https://homepages.warwick.ac.uk/~maslaq/research}

\author{Amin Talebi}
\address{\parbox{\linewidth}{Department of Mathematical Sciences, Sharif University of Technology, Iran,\\
Institute for Research in Fundamental Sciences (IPM), Iran}}
\email{amin.talebi@sharif.edu}

 \thanks{Douglas Coates was partially supported by the São Paulo Research Foundation (FAPESP) grant 2022/16259-2 and the Rio de Janeiro Research Foundation (FAPERJ) grant E-26/200.027/2025.
 }
\thanks{Ian Melbourne was partially supported by the São Paulo Research Foundation (FAPESP) grant 2024/22093-5.
}
 \thanks{ Amin Talebi is partially supported by INSF grant no.\ 4001845. \\}
\date{\today}

\begin{abstract}
  We show that a large class of infinite measure preserving dynamical systems that do not admit physical measures nevertheless exhibit strong statistical properties. In particular, we give sufficient conditions for existence of a distinguished natural measure~\( \nu \) such that the pushforwards of any absolutely continuous probability measure converge to~\( \nu \). Moreover, 
  we obtain a distributional limit law for empirical measures.
  Both of these are new results for 
  intermittent maps with at least two neutral fixed points preserving an infinite $\sigma$-finite absolutely continuous measure.

  We also extend existing results on the characterisation of the set of almost sure limit points for empirical measures. This result is new when there are at least three neutral fixed points.
\end{abstract}

\maketitle

\section{Introduction}

Let $f:X\to X$ be a dynamical system and let $m$ be a reference probability measure with respect to which we wish to study the statistical behaviour of almost every point. A basic object encoding the long-term behaviour of an orbit is the sequence of \emph{empirical measures}
\begin{equation}\label{eq:en}
e_n(x)\coloneqq \frac{1}{n}\sum_{j=0}^{n-1}\delta_{f^j x}
= \frac{1}{n}\sum_{j=0}^{n-1} f^j_*\delta_x .
\end{equation}
Understanding the asymptotic behaviour of $e_n(x)$ for typical points is one of the central problems in ergodic theory.

If $m$ is invariant under $f$, then Birkhoff’s ergodic theorem guarantees that $e_n(x)$ converges for $m$-almost every $x$.\footnote{Here and throughout, $X$ is assumed to be a topological space, and convergence of sequences of measures is with respect to the weak-$*$ topology unless otherwise specified.} If, however, $m$ is not invariant, then very different scenarios may occur. One common situation is that the system admits finitely many invariant probability measures which describe the statistical behaviour of $m$-almost every point. In this case, the long-term dynamics is governed by \emph{physical measures}: an invariant probability measure $\omega_0$ is called physical if there exists $E\subset X$ with $m(E)>0$ such that $e_n(x)\to \omega_0$ for all $x\in E$. This framework encompasses a large class of dynamical systems studied in the literature (see \cite{Alves-Bonatti-Viana2000,Ander-Vas2018,Bowen75,Bowen-Ruelle-75,Dol2000,Gan-Li-Viana-Yang-PVE-2021,yang_2021}).

More recently, there has been significant interest in \emph{nonstatistical maps}\footnote{Nonstatistical behaviour is also referred to as historical behaviour in the literature }, namely systems for which $e_n(x)$ fails to converge for $m$-almost every $x$; see~\cite{AG2022,pablo23,BC25,MB06,CroYanZha2020,Kriki&Soma,talebi2025} and references therein. These systems lie at the opposite extreme from those described above and, in particular, admit no physical measures. Their statistical description therefore requires new approaches.

For nonstatistical maps there are three directions that we explore in this paper:
\begin{itemize}
\item[(A)] Determination of the set of a.s.\ limit points of $e_n$.
\item[(B)] Characterisation of the distributional limits of $e_n$.
\item[(C)] Weak-$*$ convergence of 
$\frac{1}{n}\sum_{j=0}^{n-1} f^j_* \lambda$ where $\lambda\ll m$.
\end{itemize}
We have not seen direction~(B) addressed before. Also, we obtain to our knowledge the first results for direction~(A) in situations where the set of a.s.\ limit points is neither a singleton nor an interval. With regard to direction~(C), a probability measure $\nu$ is called \emph{natural}~\cite{BlaBun2003,CatEnr2012,JarTol2005,Mis2005} if there exists an absolutely continuous probability measure $\lambda$ such that
\[
\lim_{n\to\infty}\frac{1}{n}\sum_{j=0}^{n-1}f^j_*\lambda=\nu .
\]
(By the dominated convergence theorem, physical measures are natural.)

In this article, we show that a large class of infinite measure preserving dynamical systems that are nonstatistical, and hence do not admit physical measures nevertheless exhibit strong statistical properties along the lines of directions~(A)--(C) above.

Our results apply in particular to \emph{Pomeau-Manneville maps}, which were introduced in the 1980s by~\cite{PomMan1980} as models for intermittent turbulent or chaotic bursts interspersed with laminar behaviour. Subsequently, they have provided prototypical examples of nonuniformly hyperbolic dynamics in smooth ergodic theory and are regarded as a test-bed for new theoretical approaches in the area. The most studied examples are the \emph{Liverani-Saussol-Vaienti maps}~\cite{LivSauVai1999}, which form a special case of the intermittent maps analysed in detail by Thaler~\cite{Tha1980,Tha1983}.

In this paper, we focus on the maps considered by Thaler~\cite{Tha1980} as well as more recent examples of Coates \emph{et al.}~\cite{CoaLuzMuh2023}. Specifically, we study maps that preserve an infinite $\sigma$-finite absolutely continuous measure and have several ``equally sticky'' neutral fixed points.

For the sake of clarity, we first introduce our results for Thaler maps with two neutral fixed points~\cite{Tha2002} before discussing more general situations. 
Let $f : [0,1] \to [0,1]$ be an interval map with the following properties for some $\alpha\in(0,1]$, $b_1$, $b_2 > 0$, $c\in(0,1)$:
\begin{enumerate}
  \item\label{itm:tha1}
  \( 0 , 1 \) are neutral fixed points: \( f ' (0 ) = f' (1) = 1 \);
  \item
  $f|_{(0,c)}$ and $f|_{(c,1)}$ are $C^2$ diffeomorphisms onto $(0,1)$ admitting $C^2$ extensions onto $[0,1]$;

  \item
  $f'(x) > 1$ for all $x\in (0,1)$ and
  \( f \) is convex (resp.\ concave) on a neighbourhood of \( 0 \) (resp.\ \( 1 \));

  \item\label{itm:tha4}
  $fx - x \sim b_1 x^{1 + 1/\alpha}$ as $x \to 0$ and $fx - x \sim  b_2 (1-x) ^{ 1 + 1/\alpha}$ as $x \to 1$.
\end{enumerate}
By~\cite[Eq.~(1.5)]{Tha1983}, $f$ is conservative and ergodic with a unique (up to scaling)  invariant $\sigma$-finite absolutely continuous measure~$\mu$. Moreover, by~\cite[p.~71]{Tha1983}, $\mu([0,1]) = \infty$. However, it was shown in \cite[Theorem~2 and discussion thereafter]{AarThaZwe2005} that for almost every $x\in[0,1]$, the sequence of empirical measures $e_n(x)$
diverges for almost every $x\in[0,1]$ and hence $f$ is non-statistical.
Moreover, the set of weak-$*$ limit points is almost surely equal to the set 
\[
  \cS \coloneqq \{ \nu_p \coloneqq p \delta_{0} + (1 - p )\delta_1: p\in [0,1] \}
\]
of all convex combinations of the Dirac masses at the neutral fixed points. 

In the absence of physical measures, 
one may ask whether convergence holds when the point mass $\delta_x$ in~\eqref{eq:en} is replaced by other types of probability measure $\lambda$ and 
whether there is a distinguished limit measure $\nu_{\bar p}\in \cS$.

Our first result (corresponding to direction~C above) shows that there exists a unique natural measure $\nu_{\bar p}\in\cS$. 
Moreover, $\nu_{\bar p}$ attracts all absolutely continuous probability measures $\lambda$ and it is not necessary to take Ces\'aro averages.
(See~\cite{Kel2004} for similar results in the simpler symmetric setting with $b_1=b_2$, where $\bar p=\frac12$. See also~\cite{Zwe2002} for results on nonexistence of natural measures when $f$ is badly behaved near the neutral fixed points.)

\begin{theorem}[Existence and uniqueness of natural measures]
  \label{thm:tha-mixing}
  There exists a $\bar p\in(0,1)$, with corresponding measure $\nu_{\bar{p}} \in \cS$, 
  such that 
  $$
  \lim_{n\to\infty} f_{*}^n \lambda =  \nu_{ \bar{p} }
  \quad\text{for every absolutely continuous probability measure $\lambda$.}
  $$
\end{theorem}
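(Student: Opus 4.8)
The plan is to reduce the study of $f^n_*\lambda$ near the two neutral fixed points to two independent one-sided problems via an inducing (first-return) scheme, and then to transfer a known convergence result for a single intermittent map with one neutral fixed point. Concretely, fix the partition point $c$ and consider the two branches $f_0 \colon (0,c)\to(0,1)$ and $f_1\colon(c,1)\to(0,1)$. The orbit of (the mass of) $\lambda$ alternates between spending long stretches near $0$ and near $1$; what determines the limit is the \emph{fraction} of mass that, at time $n$, is trapped near each fixed point. I would first show that the density of $f^n_*\lambda$ restricted to any compact subset of $(0,1)$ tends to $0$ (standard for infinite-measure intermittent maps: the invariant density is non-integrable near $0$ and $1$, and by a Hopf-type / uniform-integrability argument the pushforward mass escapes into the two cusps); this already forces every limit point of $f^n_*\lambda$ to lie in $\cS$. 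The remaining content is to show that the split $p_n(\lambda) \coloneqq (f^n_*\lambda)([0,c])$ converges, and to a value $\bar p$ that does not depend on $\lambda$.

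The key step is a tightness/regularization argument showing that after one step the class of admissible densities $\lambda$ can be taken in a ``good cone'' — e.g.\ densities that are monotone (decreasing near $0$, increasing near $1$) on a neighbourhood of each fixed point with controlled variation, as in Thaler's and Zweimüller's work on the transfer operator for such maps. Once $\lambda$ lies in this cone, I would use the exact/mixing property of $f$ with respect to $\mu$ (Thaler \cite{Tha2000}, Zweimüller) together with the renewal-theoretic description of return times to the ``central'' set $Y=[\delta,1-\delta]$: writing $\varphi$ for the first-return time to $Y$ and $F=f^\varphi$ for the induced (Gibbs–Markov, finite-measure-preserving) map, the mass distribution at time $n$ is governed by $\sum_k \mu_Y(\varphi>k)$-type tails, i.e.\ by the regularly varying (index $1-\alpha$ if $\alpha<1$, or slowly varying if $\alpha=1$) return-time tails at each side. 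The ratio of the two one-sided tail constants — which are explicit in terms of $b_1,b_2$ and the derivative cocycle — pins down $\bar p$, and crucially the limit is the same for every $\lambda$ in the cone because the induced map $F$ is mixing and the initial density is ``forgotten''.

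I would organize the argument as: (1) escape of mass from compacts, giving $\omega(\lambda)\subseteq\cS$; (2) a coupling/cone-contraction step reducing to a canonical family of initial densities; (3) a renewal-equation computation for the surviving mass in each cusp, using Thaler–Zweimüller asymptotics for the transfer operator $L^n$ of $f$ applied to functions supported near $0$ and near $1$ separately, to get $p_n(\lambda)\to\bar p$ with $\bar p = A_1/(A_1+A_2)$ for the appropriate tail constants $A_i$; (4) assembling: since the compact part vanishes and the two cusp masses converge to $\bar p$ and $1-\bar p$ while concentrating at the respective fixed points (because within each cusp the mass is pushed ever closer to the neutral point — again a consequence of the dynamics being a contraction \emph{towards} the fixed point in the $C^0$ sense on a one-sided neighbourhood), we get $f^n_*\lambda \to \bar p\,\delta_0 + (1-\bar p)\,\delta_1 = \nu_{\bar p}$.

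The main obstacle I expect is step (3): making the renewal/tail asymptotics uniform enough across the relevant class of initial densities to conclude genuine convergence of $p_n(\lambda)$ (not merely that its limit points lie in $[0,1]$), and handling the boundary case $\alpha=1$ where the tails are only slowly varying so the relevant Karamata/Erickson renewal theorems are more delicate. A secondary technical point is interfacing the two one-sided cusps through the central set without the bookkeeping of ``which fixed point the mass last visited'' causing the split to oscillate; this is where mixing of the induced map $F$ does the real work, and I would lean on \cite{Tha2000,MelTer2012}-style operator renewal estimates to control it.
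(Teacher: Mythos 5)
Your proposal follows essentially the same route as the paper: escape of mass from compact subsets of $(0,1)$ forces all limit points of $f_*^n\lambda$ into $\cS$, and the split $\bar p$ is then pinned down by a last-return renewal decomposition combined with uniform Gou\"ezel/Melbourne--Terhesiu asymptotics for $1_YL^n$ applied to densities induced on $Y$, together with the two one-sided tail constants, giving $\bar p=c_1/(c_1+c_2)$ exactly as in the paper's Proposition~\ref{prop:mixing-main} (the paper replaces your cone-contraction step by approximating $\rho$ from a dense class $K(X)$ and truncating to $\bigcup_{i\le r}f^{-i}Y$). One small correction to step (4): since $f'>1$ on $(0,1)$ the dynamics repels from the neutral fixed points rather than contracting towards them, so the concentration of the surviving cusp mass at $\xi_k$ comes not from any contraction but from the fact that points remaining in a cusp for a long time must lie in the preimage sets $g_k^nY_k$ accumulating at $\xi_k$ --- equivalently, it is already a consequence of your step (1), so that justification should be dropped.
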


\begin{remark} A formula for $\bar p$ is given in Section~\ref{sec:thaler}, namely $\bar p=c_1/(c_1+c_2)$ where $c_1,c_2$ are as in~\eqref{eq:tha-2}.
\end{remark}

An almost immediate consequence of Theorem~\ref{thm:tha-mixing}
is the following decay of correlations type result. 

\begin{corollary}\label{cor:tha-mixing}
  Suppose that $\varphi:[0,1]\to\R$ is bounded, measurable, and
  continuous at $0$ and $1$.
  Then
  $$
  \lim_{n\to\infty} \int \psi \cdot \varphi \circ f^n \, d\Leb = \int \psi \, d\Leb \int \varphi \, d \nu_{\bar{p}}\quad
  \text{for all $\psi \in L^1 ([0,1],\Leb)$. }
  $$
  Equivalently, 
  $
  \lim_{n\to\infty} \int \psi \cdot \varphi \circ f^n \, d \mu = \int \psi \, d \mu \int \varphi \, d \nu_{\bar{p}}$
  for all $\psi \in L^1 ([0,1], \mu )$.
\end{corollary}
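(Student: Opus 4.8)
The plan is to deduce the corollary directly from Theorem~\ref{thm:tha-mixing}, by unwinding the definition of weak-$*$ convergence and then upgrading it so that it applies to the (possibly discontinuous) test function $\varphi$. First I would reduce to the case where $\psi\,d\Leb$ is a probability measure: splitting $\psi=\psi^+-\psi^-$ and normalising (a summand with zero integral contributes $0$ to both sides), it suffices to prove the first identity when $\lambda\coloneqq\psi\,d\Leb$ is an absolutely continuous probability measure. Since $\varphi$ is bounded and measurable, $\varphi\circ f^n\in L^\infty(\Leb)$ and
$$
\int \psi\cdot\varphi\circ f^n\,d\Leb=\int\varphi\circ f^n\,d\lambda=\int\varphi\,d(f^n_*\lambda),
$$
and by Theorem~\ref{thm:tha-mixing} we have $f^n_*\lambda\to\nu_{\bar p}$ in the weak-$*$ topology.

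The only real point is that weak-$*$ convergence only yields $\int g\,d(f^n_*\lambda)\to\int g\,d\nu_{\bar p}$ for $g\in C([0,1])$, whereas $\varphi$ is merely continuous at $0$ and $1$. Here one uses that $\nu_{\bar p}=\bar p\,\delta_0+(1-\bar p)\,\delta_1$ is supported on $\{0,1\}$, where $\varphi$ \emph{is} continuous. Concretely: fix $\eps>0$ and pick $\delta\in(0,\tfrac12)$ with $|\varphi(x)-\varphi(0)|<\eps$ on $[0,2\delta]$ and $|\varphi(x)-\varphi(1)|<\eps$ on $[1-2\delta,1]$; let $M=\sup|\varphi|$ and build $g_\eps\in C([0,1])$ with $g_\eps\le\varphi$ everywhere, $g_\eps\equiv\varphi(0)-\eps$ on $[0,\delta]$, $g_\eps\equiv\varphi(1)-\eps$ on $[1-\delta,1]$, $g_\eps\equiv-M$ on $[2\delta,1-2\delta]$, and $g_\eps$ linear in between (on the transition intervals $\varphi$ dominates both endpoint values of $g_\eps$, so $g_\eps\le\varphi$ there as well). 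Then $\int g_\eps\,d\nu_{\bar p}=\bar p\varphi(0)+(1-\bar p)\varphi(1)-\eps=\int\varphi\,d\nu_{\bar p}-\eps$, so
$$
\liminf_{n\to\infty}\int\varphi\,d(f^n_*\lambda)\ge\liminf_{n\to\infty}\int g_\eps\,d(f^n_*\lambda)=\int g_\eps\,d\nu_{\bar p}=\int\varphi\,d\nu_{\bar p}-\eps.
$$
A symmetric upper approximant $G_\eps\ge\varphi$ gives $\limsup_n\int\varphi\,d(f^n_*\lambda)\le\int\varphi\,d\nu_{\bar p}+\eps$, and letting $\eps\to0$ proves $\int\psi\cdot\varphi\circ f^n\,d\Leb\to\int\psi\,d\Leb\int\varphi\,d\nu_{\bar p}$. (This step is just the standard portmanteau refinement: $\int\varphi\,d\mu_n\to\int\varphi\,d\mu$ whenever $\mu_n\to\mu$ weak-$*$ and $\mu$ assigns zero mass to the discontinuity set of the bounded measurable $\varphi$, applied with $\mu=\nu_{\bar p}$, whose discontinuity set is avoided by the hypothesis on $\varphi$.)

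Finally, for the $\mu$-version, write $d\mu=h\,d\Leb$ for the ($\sigma$-finite, a.e.\ positive) invariant density $h$. Then $\psi\in L^1(\mu)$ if and only if $\psi h\in L^1(\Leb)$, with $\int\psi\,d\mu=\int\psi h\,d\Leb$, so applying the case just proved to $\psi h$ in place of $\psi$ gives
$$
\int\psi\cdot\varphi\circ f^n\,d\mu=\int(\psi h)\cdot\varphi\circ f^n\,d\Leb\to\int\psi h\,d\Leb\int\varphi\,d\nu_{\bar p}=\int\psi\,d\mu\int\varphi\,d\nu_{\bar p},
$$
and since $\psi\mapsto\psi h$ is a bijection between $L^1(\mu)$ and $L^1(\Leb)$ the two formulations are equivalent. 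No serious obstacle arises once Theorem~\ref{thm:tha-mixing} is available; the only care needed is the approximation argument above, which is exactly where the hypothesis that $\varphi$ is continuous precisely at the two neutral fixed points (hence $\nu_{\bar p}$-a.e.\ continuous) is used.
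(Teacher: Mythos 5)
Your proof is correct and follows essentially the same route as the paper's: reduce to a probability density, apply Theorem~\ref{thm:tha-mixing} to get $f^n_*\lambda\to\nu_{\bar p}$, upgrade weak-$*$ convergence to the bounded measurable $\varphi$ using that its discontinuity set is $\nu_{\bar p}$-null, and pass between $\Leb$ and $\mu$ via the density $h$. The only differences are cosmetic: you prove the portmanteau refinement by hand with explicit continuous minorants/majorants where the paper cites \cite[Theorem~25.7]{Bil1995}, and you handle the normalisation via $\psi=\psi^+-\psi^-$ where the paper normalises $\psi$ directly and treats $\int\psi\,dm=0$ by approximation.
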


We now return to consideration of the sequence $e_n$ of empirical measures.
Since $e_n$ fails to converge pointwise, it is natural to consider alternative modes of convergence such as
distributional convergence (direction~B above).

To describe the distributional convergence of $e_n$, we recall the classical arcsine law for occupation times of~\cite{Lev1939} as adapted by~\cite{Lam1958} and then by~\cite{Tha2002} to the current context.
Define the sequence of occupation times
$$
S_n \coloneqq \sum_{ j = 0 }^{ n - 1 } 1_{ B} \circ f^j,
$$
where $B\subset [0,1)$ is a closed interval containing $0$.
Also, we define the $[0,1]$-valued random variable $Z_{\alpha, p}$ for $\alpha\in(0,1]$, $p\in[0,1]$, as follows:
When $\alpha=1$, we set $Z_{1,p}\equiv p$. When $\alpha\in(0,1)$, let $Z_{\alpha, p}$ be the random variable with continuous
density 
\begin{equation}
  \label{eq:arcsine-density}
  \frac{ \hat{ p } \sin \alpha \pi }{ \pi } \frac{ t^{ \alpha }(1 - t)^{ \alpha - 1 }+t^{ \alpha - 1  }(1 - t)^{ \alpha } }{ \hat{ p }^{ 2 }t^{ 2 \alpha  } + 2\hat{ p } t^{ \alpha }(1 - t)^{  \alpha  } \cos \alpha \pi + (1 - t)^{ 2  \alpha } },
\end{equation}
where $\hat p= p^{-1}-1$.
Thaler~\cite[Theorem, p.~1291]{Tha2002} proved a distributional limit law for occupation times:
\[
  \tfrac1n S_n \text{ converges strongly in distribution to }  Z_{\alpha,\bar p}.
\]
When $\alpha\in(0,1)$, this 
means that $\lambda(\frac1n S_n\in I)\to \PP(Z_{\alpha,\bar p}\in I)$ for all absolutely continuous probability measures $\lambda$ on $[0,1]$ and all open intervals $I\subset[0,1]$.\footnote{We use standard probabilistic conventions, so on the left-hand side $\lambda(\frac1n S_n\in I)=\lambda(x\in [0,1]:\frac1n S_n(x)\in I)$ and on the right-hand side
$\PP(Z_{\alpha,\bar p}\in I)$ is the integral of the density~\eqref{eq:arcsine-density} over $I$.}
(When $\alpha=1$, we exclude intervals $I$ with an endpoint at $\bar p$.)
Equivalently, \( \frac1n S_{n*} \lambda \to \omega_0 \) 
where $\omega_0$ is the distribution $\omega_0(I)=\PP(Z_{\alpha,\bar p}\in I)$ of $Z_{\alpha,\bar p}$.

To each $[0,1]$-valued random variable $Z$, we can associate the $\cS$-valued random variable $\nu_Z=Z\delta_0+(1-Z)\delta_1$.

\begin{theorem}[Distributional limit law for empirical measures]
  \label{thm:tha-dist-conv}
  \[
    e_n \text{ converges strongly in distribution to }  \nu_{Z_{\alpha,\bar p}} .
  \]
\end{theorem}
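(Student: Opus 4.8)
The plan is to reduce the distributional convergence of the measure-valued sequence $e_n$ to the already-quoted distributional convergence of the scalar occupation-time sequence $\frac1n S_n$ (Thaler's arcsine law), by showing that $e_n$ is asymptotically determined by the single statistic $\frac1n S_n$. Concretely, I would first fix the closed interval $B\subset[0,1)$ containing $0$ used in the definition of $S_n$, and set $Y_n\coloneqq\frac1n S_n=\frac1n\sum_{j=0}^{n-1}1_B\circ f^j$, so that $Y_{n*}\lambda\to\omega$ strongly in distribution, where $\omega$ is the law of $Z_{\alpha,\bar p}$. The key claim is that $e_n-\nu_{Y_n}\to 0$ in the appropriate sense: more precisely, for every continuous test function $\varphi\in C[0,1]$ one has $\int\varphi\,de_n - \bigl(Y_n\varphi(0)+(1-Y_n)\varphi(1)\bigr)\to 0$ in distribution (equivalently, in $\lambda$-measure for every absolutely continuous $\lambda$). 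Granting this, the map $Z\mapsto\nu_Z=Z\delta_0+(1-Z)\delta_1$ from $[0,1]$ into $\cS\subset\cP([0,1])$ is a homeomorphism onto its image (continuity in the weak-$*$ topology is immediate, and injectivity is clear), so the continuous mapping theorem upgrades $Y_{n*}\lambda\to\omega$ to $(\nu_{Y_n})_*\lambda\to(\nu_\cdot)_*\omega$, which is exactly the law of $\nu_{Z_{\alpha,\bar p}}$; and then the asymptotic-equivalence claim transfers this to $e_{n*}\lambda$. Since this holds for all absolutely continuous $\lambda$, this is precisely strong distributional convergence of $e_n$ to $\nu_{Z_{\alpha,\bar p}}$.

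The heart of the argument is therefore the asymptotic-equivalence claim, and here the mechanism is that mass escapes to the two neutral fixed points: for a test function $\varphi$ continuous at $0$ and $1$, the contribution of orbit segments near $0$ is close to $\varphi(0)$ times the fraction of time spent near $0$, the contribution near $1$ is close to $\varphi(1)$ times the fraction of time spent near $1$, and the remaining time (spent in a region bounded away from $\{0,1\}$) is asymptotically negligible. To make this precise I would fix $\eps>0$, choose small neighbourhoods $U_0\ni0$, $U_1\ni1$ on which $|\varphi-\varphi(0)|<\eps$ resp.\ $|\varphi-\varphi(1)|<\eps$, and write $\int\varphi\,de_n$ as a sum of three occupation averages over $U_0$, $U_1$, and the complement $K=[0,1]\setminus(U_0\cup U_1)$. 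One needs: (i) $\frac1n\sum_{j<n}1_K\circ f^j\to0$ in distribution — this follows because $K$ is a finite-measure set in the infinite-measure system and, under $\lambda$ absolutely continuous, $\frac1n\sum_{j<n}1_K\circ f^j\to0$ in measure by the Hopf ratio ergodic theorem combined with Thaler's arcsine asymptotics (or directly from Aaronson's theory: $\frac1n\sum 1_K\circ f^j\to0$ a.e.\ since $K$ has finite $\mu$-measure in a conservative ergodic infinite measure system — this is the standard fact that averages of integrable observables tend to zero); (ii) the occupation fraction of $U_0$ agrees asymptotically with $Y_n$ up to an error controlled by the occupation of $U_0\setminus B$ and $B\setminus U_0$, both of which are again finite-$\mu$-measure sets away from the neutral fixed points and hence have vanishing occupation fraction; similarly for $U_1$ versus $1-Y_n$. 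Combining, $\bigl|\int\varphi\,de_n-(Y_n\varphi(0)+(1-Y_n)\varphi(1))\bigr|\le 2\eps+o_{\PP}(1)$, and letting $\eps\to0$ gives the claim. For finitely many test functions $\varphi_1,\dots,\varphi_m$ spanning a norming set for weak-$*$ convergence this gives joint convergence, which is what is needed for convergence of $e_{n*}\lambda$ as measures on $\cP([0,1])$.

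I expect the main obstacle to be item (i)/(ii): making rigorous that time spent in any compact subset of $(0,1)$ is asymptotically a vanishing fraction of $n$, uniformly enough to pass to the distributional limit. The cleanest route is to invoke that $f$ is conservative, ergodic and infinite-measure-preserving with density bounded away from $0$ on compacts of $(0,1)$, so for $K\Subset(0,1)$ the set $K$ has finite $\mu$-measure; then for any $\sigma$-finite invariant measure the Hopf ergodic theorem gives $\frac{S_n(1_K)}{S_n(1_{B})}\to\frac{\mu(K)}{\mu(B)}$, a finite constant, while Thaler's result already tells us $\frac1n S_n(1_B)$ has a nondegenerate limit law supported on $[0,1]$ when $\alpha<1$ — wait, that only bounds the ratio, not $\frac1n S_n(1_K)$ itself. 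The correct and cleanest statement is simply Aaronson's: in a conservative ergodic infinite-measure-preserving system, $\frac1n\sum_{j<n}g\circ f^j\to0$ a.e.\ for every $g\in L^1(\mu)$; applying this with $g=1_K$ (note $1_K\in L^1(\mu)$ since $\mu(K)<\infty$) gives $\frac1n S_n(1_K)\to0$ a.e., hence in $\lambda$-measure for $\lambda\ll\mu$, which is exactly (i). For (ii) the same applies to $g=1_{U_0\triangle B}$, which lies in $L^1(\mu)$ because $U_0\triangle B$ is bounded away from at least one of the two neutral fixed points — here one should choose $U_0\subset B$ or $B\subset U_0$, or intersect, so that the symmetric difference is a compact subset of $(0,1)\cup$ (a neighbourhood of the *other* fixed point $1$), all of finite $\mu$-measure. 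Handling the bookkeeping so that every error set genuinely has finite $\mu$-measure is the delicate point, and the $\alpha=1$ case needs the minor separate remark (excluded intervals at $\bar p$) carried through, exactly as in the quoted occupation-time statement.
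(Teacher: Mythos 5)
Your proposal is correct, and while it starts from the same two ingredients as the paper --- the strong arcsine law for $\tfrac1n S_n$ and the Hopf/Aaronson ergodic theorem (the paper's Lemma~\ref{lem:aar}) to show that occupation of finite-$\mu$-measure sets is $o(n)$ a.e.\ --- the way you combine them is genuinely different. The paper (Lemma~\ref{lem:second}) transfers the limit from $\tfrac1n S_n$ to $e_n$ via the projection $\pi(\omega)=(\omega(B_\eps(\xi_k)))_k$, continuity of $\pi$ on $\cS$, and a subsequence/unique-limit-point argument resting on Corollary~\ref{cor:w*}; it first establishes $e_n\to_m\nu_{Z_{\alpha,\bar p}}$ and then upgrades to strong distributional convergence by checking the asymptotic invariance $d_W(e_n\circ f,e_n)\le 2/n$ and invoking Zweim\"uller's theorem. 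You instead prove a direct quantitative coupling $d_W\bigl(e_n,\nu_{\frac1n S_n}\bigr)\to0$ $m$-a.e.\ (via the $\eps$-neighbourhood decomposition and the finite-measure error sets $K$ and $B\triangle U_0$), and then conclude by the continuous mapping theorem applied to $Z\mapsto\nu_Z$ together with a converging-together (Slutsky) lemma. This buys you two things: you never need Zweim\"uller's general theorem, since the a.e.\ control of the error is automatically $\lambda$-a.e.\ for every $\lambda\ll m$ and the arcsine law is already quoted in strong form; and you identify the limit without the compactness/subsequence step. The costs are minor: your "finitely many test functions spanning a norming set" remark should be a countable dense family in the unit ball of $C[0,1]$ with a dominated-convergence argument for the metrizing series, and the bookkeeping you flag (choosing $U_0\subset B$, $U_1\subset[0,1]\setminus B$ so every symmetric difference has finite $\mu$-measure) must be done as you describe. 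The paper's route via $\pi$ also scales to $d\ge3$ fixed points with no change, whereas yours would use the vector occupation time $S_n=(S_n^1,\dots,S_n^d)$ --- a trivial adaptation.
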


Theorem~\ref{thm:tha-dist-conv}
describes how the $e_n(x)$ are asymptotically distributed on $\cS$, see Figure \ref{fig:D1}.

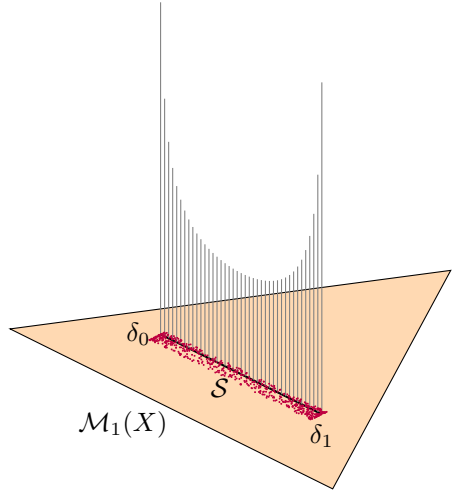
\begin{figure}
  \centering
  
  \tdplotsetmaincoords{60}{45} 

  \begin{tikzpicture}[tdplot_main_coords,scale=3]
    \coordinate (A) at (0,0,0);
    \coordinate (B) at (2,0,0);
    \coordinate (C) at (1,{sqrt(3)},0);
    
    \draw[thin, fill=orange!30] (A) -- (B) -- (C) -- cycle;

    \tdplotsetrotatedcoords{0}{0}{0}
    \begin{scope}[tdplot_rotated_coords]
      \coordinate (D) at (0.5, {sqrt(3)/4}, 0);
      \coordinate (E) at (1.5, {sqrt(3)/4}, 0);
      \draw[thick, black] (D) -- (E);
      
      \foreach \k in {1,...,41} {
        \pgfmathsetmacro{\xk}{0.5 + (\k-1) * (1 / 40)}
        \pgfmathsetmacro{\yk}{sqrt(3)/4}
        \pgfmathparse{1 / sqrt(15 * (\k / 42 * (1 - \k / 42)))}
        \edef\zk{\pgfmathresult}
        \coordinate (P\k) at (\xk, \yk, 0);
        \coordinate (Q\k) at (\xk, \yk, \zk);
        \draw[thin, gray] (P\k) -- (Q\k);
      }
      
      \pgfmathsetseed{3} 
      \foreach \j in {1,...,600} {
        \pgfmathsetmacro{\xrand}{rand} 
        \pgfmathsetmacro{\yrand}{rand * 0.05 - 0.02} 
        \pgfmathsetmacro{\xarcsin}{sin(deg(asin(\xrand)))} 
        \filldraw[purple] (1/2*\xarcsin + 1, \yrand + 0.433012, 0) circle (0.05pt);
      }
    \end{scope}

    \node at (D) [left] {\small{$\delta_0$}};
    \node at (E) [below] {\small{$\delta_1$}};
    \node at (0.9, 0.8/2, 0) [below] {\small{$\cS$}};
    \node at (0.8,-.1, 0) [below] {\small{$\cM_1(X)$}};
  \end{tikzpicture}
  \caption{\small{A schematic picture for the distributional limit law for empirical measures. The small dots around the set $\cS$ are empirical measures $e_n(x)$  of an ensemble of points in the phase space chosen randomly (w.r.t.\ an absolutely continuous probability measure) for some fixed large $n$. As $n$ increases, each individual dot oscillates between $\delta_0$ and $\delta_1$ (\cite{AarThaZwe2005}), but in such a way as to make together an asymptotic distribution on $\cS$ (Theorem \ref{thm:tha-dist-conv}). }} \label{fig:D1}
\end{figure}


  Let $\cM_1(M)$ denote the space of probability measures on a measure space $M$.
  The sequence $e_n:[0,1]\to\cM_1([0,1])$ induces the pushforward sequence
  $e_{n*}:\cM_1([0,1])\to\cM_1(\cM_1([0,1]))$.
   Since $(e_{n*}\lambda)(\cE)=\lambda(e_n\in\cE)$, we can view Theorem~\ref{thm:tha-dist-conv} equivalently as a statement about the limit of
$e_{n*}\lambda$ for all absolutely continuous measures $\lambda$:

\begin{corollary}
  Let $\omega\in\cM_1(\cM_1([0,1]))$ be the distribution of $\nu_{Z_{\alpha,\bar p}}$.  Then
\( \lim_{n\to\infty} e_{n*} \lambda = \omega \) for every absolutely continuous probability measure~$\lambda\in\cM_1([0,1])$.
\end{corollary}

\vspace{-.75ex}
Since $e_{n*}\lambda$ is a probability measure on the convex set $\cM_1([0,1])$, we can consider its expectation $\E (e_{n*}\lambda) =\int_{\cM_1([0,1])}\omega_0 \, de_{n*}\lambda \in \cM_1([0,1])$, which can be interpreted as a probability measure on $[0,1]$ satisfying $\E(e_{n*}\lambda) ( E ) = \int \omega_0(E) \, de_{n*}\lambda(\omega_0)$.

We have the following consequence of Theorem \ref{thm:dist-conv}:

\begin{corollary}\label{cor:tha-avg-bir}
Let $\lambda$ be an absolutely continuous probability measure on $[0,1]$.
Then

\noindent (a) 
  $\frac{1}{n} \sum_{ j = 0 }^{ n - 1 } f^j_*\lambda= \int e_n  \, d \lambda  = 
 \E(e_{n*}\lambda)$ for all $n\ge1$.

\noindent (b) The common limit of the sequences of measures in (a) is
given by
$\lim_{ n \to \infty } \E(e_{n*}\lambda)=  \nu_{ \bar{p} }$.
\end{corollary}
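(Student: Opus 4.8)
The plan is to treat part~(a) as a purely formal identity among three repackagings of the same average of pushforwards, and then to deduce part~(b) from the convergence results already established, via the elementary fact that Ces\'aro averages of a convergent sequence converge to the same limit.

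\emph{Part~(a).} Recall that $e_n(x)=\frac1n\sum_{j=0}^{n-1}\delta_{f^jx}=\frac1n\sum_{j=0}^{n-1}f^j_*\delta_x$. For $\varphi\in C([0,1])$,
\[
\int_{[0,1]}\Big(\int\varphi\,d\big(e_n(x)\big)\Big)d\lambda(x)=\frac1n\sum_{j=0}^{n-1}\int\varphi\circ f^j\,d\lambda=\frac1n\sum_{j=0}^{n-1}\int\varphi\,d(f^j_*\lambda),
\]
the interchange of the finite sum with the integral being trivial. The left-hand side is by definition $\int\varphi\,d\!\big(\int e_n\,d\lambda\big)$; and since $e_{n*}\lambda$ is the pushforward of $\lambda$ under $e_n$, the change-of-variables formula rewrites it as $\int_{\cM_1([0,1])}\big(\int\varphi\,d\omega\big)\,d(e_{n*}\lambda)(\omega)=\int\varphi\,d\,\E(e_{n*}\lambda)$. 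As $C([0,1])$ is measure-determining, the three measures $\frac1n\sum_{j=0}^{n-1}f^j_*\lambda$, $\int e_n\,d\lambda$ and $\E(e_{n*}\lambda)$ coincide.

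\emph{Part~(b).} By part~(a) it is enough to show $\frac1n\sum_{j=0}^{n-1}f^j_*\lambda\to\nu_{\bar p}$ in the weak-$*$ topology. Fix $\varphi\in C([0,1])$. Theorem~\ref{thm:tha-mixing} gives $\int\varphi\,d(f^j_*\lambda)\to\int\varphi\,d\nu_{\bar p}$ as $j\to\infty$, so the Ces\'aro averages $\frac1n\sum_{j=0}^{n-1}\int\varphi\,d(f^j_*\lambda)$ converge to the same limit $\int\varphi\,d\nu_{\bar p}$; that is, $\int\varphi\,d\,\E(e_{n*}\lambda)\to\int\varphi\,d\nu_{\bar p}$. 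Since $\varphi$ was arbitrary, $\E(e_{n*}\lambda)\to\nu_{\bar p}$.

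\emph{Alternative route and the obstacle.} One may instead derive (b) directly from the distributional limit law, in the spirit of the paragraph preceding the statement: by Theorem~\ref{thm:tha-dist-conv}, $e_{n*}\lambda\to\omega$ on the compact metrizable space $\cM_1([0,1])$, and the barycenter map $\Omega\mapsto\int\eta\,d\Omega(\eta)$ is weak-$*$ continuous (for fixed $\varphi\in C([0,1])$ the function $\eta\mapsto\int\varphi\,d\eta$ is bounded and continuous on $\cM_1([0,1])$, so convergence of $\Omega$ passes through it); hence $\E(e_{n*}\lambda)\to\E(\omega)=\E(\nu_{Z_{\alpha,\bar p}})=\nu_{\E(Z_{\alpha,\bar p})}$, using $\E(\nu_Z)=\nu_{\E(Z)}$. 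This needs $\E(Z_{\alpha,\bar p})=\bar p$, which is trivial for $\alpha=1$ and for $\alpha\in(0,1)$ follows by integrating Thaler's occupation-time limit $\frac1nS_n\to Z_{\alpha,\bar p}$ against $\lambda$ and comparing with $\frac1n\sum_{j=0}^{n-1}(f^j_*\lambda)(B)\to\nu_{\bar p}(B)=\bar p$, the latter being Corollary~\ref{cor:tha-mixing} applied to $\varphi=1_B$ (valid since $B$ is closed, contains $0$, and lies in $[0,1)$, so $1_B$ is continuous at $0$ and at $1$). Every step here is routine bookkeeping; the only points worth stating carefully are the change-of-variables identities of part~(a) and, on the alternative route, the weak-$*$ continuity of the barycenter map together with the identity $\E(Z_{\alpha,\bar p})=\bar p$, so I do not anticipate a genuine obstacle.
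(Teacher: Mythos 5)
Your proof is correct, and part~(a) is essentially identical to the paper's (the paper verifies the identities by evaluating on Borel sets rather than integrating continuous test functions, but these are the same formal change-of-variables manipulations). For part~(b), however, your primary argument takes a genuinely different route. The paper deduces (b) from the distributional limit law (Theorem~\ref{thm:tha-dist-conv}): since $e_{n*}\lambda\to\omega$ and the expectation $\E:\cM_1(\cM_1([0,1]))\to\cM_1([0,1])$ is weak-$*$ continuous, $\E(e_{n*}\lambda)\to\E\,\nu_{Z_{\alpha,\bar p}}=\nu_{\E Z_{\alpha,\bar p}}=\nu_{\bar p}$, the first-moment identity $\E Z_{\alpha,\bar p}=\bar p$ being obtained from the double Laplace formula for the multiray arcsine law. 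You instead invoke Theorem~\ref{thm:tha-mixing} ($f^j_*\lambda\to\nu_{\bar p}$) and apply Ces\`aro averaging; this is shorter and entirely elementary, but it rests on the stronger theorem: in the abstract setting the analogue of (b) is stated as a consequence of Theorem~\ref{thm:dist-conv} alone (hypotheses \ref{itm:H1}--\ref{itm:H2}), whereas the pushforward convergence requires \ref{itm:H3}--\ref{itm:H4} as well, so the paper's route yields the result under weaker hypotheses. For the Thaler maps at hand all hypotheses are verified, so your argument is perfectly valid here. Your ``alternative route'' is precisely the paper's proof, except that you derive $\E Z_{\alpha,\bar p}=\bar p$ by comparing the integrated occupation-time law with Corollary~\ref{cor:tha-mixing} applied to $1_B$ --- a correct and self-contained alternative to the Laplace-transform computation, though it again leans on the mixing theorem rather than on the arcsine law alone.
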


\subsubsection*{Beyond two neutral fixed points}
Our results generalise to any finite number of neutral fixed points $\xi_1,\dots,\xi_d$.
The proof of Theorem~\ref{thm:tha-mixing} requires no modifications.
The treatment of the arcsine law in~\cite{Tha2002,ThaZwe2006} assumes $d=2$ but this restriction is removed by Sera \& Yano~\cite{SerYan2019} using a different method. We use~\cite{SerYan2019} to generalise Theorem~\ref{thm:tha-dist-conv}.
The generalisations of Theorem~\ref{thm:tha-mixing} and~\ref{thm:tha-dist-conv} are stated in Section~\ref{sec:abstract} as Theorems~\ref{thm:mixing} and~\ref{thm:dist-conv} respectively.

The final main result in this paper is Theorem~\ref{thm:ae} which shows that for general $d\ge1$ and $\alpha\in(0,1)$, the set of limit points of the sequence of empirical measures $e_n$ is almost surely equal to the $(d-1)$-dimensional simplex
\[
  \cS \coloneqq \{ p_1\delta_{\xi_1}+\dots+ p_d\delta_{\xi_d}:p_1,\dots,p_d\ge0,\;p_1+\dots + p_d=1\}.
\]
Previous arguments in~\cite{AarThaZwe2005,CoaLuz2024} exploited connectedness of the set of limit points and were effective for $d\le 2$, $\alpha\in(0,1]$. Our method uses instead the full support of the arcsine law in~\cite{SerYan2019} combined with Theorem~\ref{thm:dist-conv}. However, this full support property fails for $\alpha=1$ which means that Theorem~\ref{thm:ae} remains open for $d\ge3$, $\alpha=1$.

The remainder of the paper is organised as follows.
In Section~\ref{sec:abstract}, we present an abstract setup which allows for any finite number of neutral fixed points, and we state our main results 
Theorem~\ref{thm:ae},~\ref{thm:dist-conv} and~\ref{thm:mixing}.
Also, we state and prove Corollaries~\ref{cor:avg-bir} and~\ref{cor:mixing} which generalise Corollaries~\ref{cor:tha-avg-bir} and~\ref{cor:tha-mixing} respectively.
Theorems~\ref{thm:ae},~\ref{thm:dist-conv} and~\ref{thm:mixing}
are proved in Section~\ref{sec:proofs}.
In Section~\ref{sec:examples}, we verify that our abstract setup includes the classes of intermittent maps in~\cite{CoaLuzMuh2023,Tha1980}.

\vspace{1em}
\paragraph{\textbf{Notation}}
We use ``big O'' and $\lesssim$ notation interchangeably, writing $a_n=O(b_n)$ or $a_n\lesssim b_n$ if there are constants $C>0$, $n_0\ge1$ such that $a_n\le Cb_n$ for all $n\ge n_0$. As usual, $a_n=o(b_n)$ means that $a_n/b_n\to0$ and $a_n\sim b_n$ means that $a_n/b_n\to1$.

\section{Abstract statement of results}
\label{sec:abstract}

\subsection{Maps with Gibbs-Markov first return maps}
\label{sec:gm}

In this section we recall some definitions and basic properties of maps with Gibbs-Markov first return maps needed to state our main assumptions.

Let $X$ be a compact metric space with Borel probability measure $m$ and
let $f:X\to X$ be a nonsingular measurable map, so $m(f^{-1}E)=0$ for every null set $E$.
We suppose that $f$ is ergodic, so for every measurable set $E$ with $f^{-1}E=E$ we have $m(E)=0$ or $1$. Also, we suppose that $f$ is
 conservative, which is equivalent to requiring that $f^nx\in E$ infinitely often for a.e.\ $x\in E$ for any positive measure set $E$.
(A standard reference for this material is~\cite[Ch 1]{Aar1997}.)

We say that the map $f$ together with a countable partition $\cP$ of $X$ into positive measure subsets is a \emph{Markov map} if the restriction of $f$ to each partition element is a measurable bijection onto a union of partition elements. 
For $\theta\in(0,1)$, we define the symbolic metric 
  $d_{\theta} (x,y) \coloneqq \theta^{s(x,y)}$ where 
  $s(x,y) \coloneqq \inf \{ n {\geq 0} : f^nx, f^ny \text{ lie in different elements of }\cP \}$.
It is assumed that $s(x,y)=\infty$ if and only if $x=y$ so that $d_\theta$ is a metric.

A Markov map $f$ is said to be \emph{topologically mixing} (w.r.t.\ the symbolic metric topology) if for every pair of open nonempty subsets
$U,V\subset X$, there 
exists an $N\ge1$ such that $f^n U \cap V \neq \emptyset$ for every $n \geq N$.
Equivalently, for every
$a,b \in \cP$ there exists an $N\ge1$ such that $f^n a \cap b \neq \emptyset$ for every $n \geq N$.

A standard approach in dynamical systems is \emph{inducing}, whereby one studies the complex dynamics of $f$ on $X$ by focusing on the ``nicer'' return dynamics to a well-chosen inducing domain $Y\subset X$.
We take 
$Y \subset X$ to be a union of partition elements and define the \emph{first return time} $\tau:Y\to\Z^+$ and \emph{first return map} $F: Y \to Y$ by setting 
\[
\tau (y) \coloneqq \inf \{ n \geq 1 : f^ny \in Y \}, \qquad
F(y) \coloneqq f^{ \tau (y) }y.
\]
(Conservativity ensures that $\tau$ is finite a.e.\ and hence that $F$ is well-defined a.e.)

Let $\cP_Y$ be the partition of $Y$ consisting of sets
$\big(\bigcap_{j=0}^{n-1} f^{-j}a_i\big)\cap f^{-n}Y$ where
$a_0,\dots,a_{n-1}\in\cP$ with $a_0\subset Y$ and $a_1,\dots,a_{n-1}\subset X\setminus Y$. Then $F$ is Markov with partition $\cP_Y$.

For the map $F$, we define the symbolic metric $d_\theta=\theta^s$ w.r.t.\ the partition $\cP_Y$. 
The map $F$ is \emph{Gibbs-Markov} if 
\begin{itemize}
  \item \textbf{finite images}: $\Card \{ F a : a \in \cP_{ Y } \} < \infty$;
  \item \textbf{bounded distortion}: there exists $\theta \in (0,1)$ and $C > 0$ such that the function $\log \frac{ d m } { dm \circ F }$ is $d_\theta$-Lipschitz on elements of $\cP_Y$.
\end{itemize}

Denote by $\cB_{\theta}(Y)$ the space of real-valued functions which are Lipschitz continuous on $Y$ with respect to the metric $d_{\theta}$. We recall the following classical result about the existence of invariant measures for maps with Gibbs-Markov first return maps.

\begin{lemma}
  \label{lem:gm}
  Suppose that $f : X \to X $ is a conservative ergodic Markov map with Gibbs-Markov first return map $F=f^\tau : Y \to Y $. Then $f$ preserves a unique (up to scaling) absolutely continuous $\sigma$-finite measure $\mu$, and $\mu$ is equivalent to $m$.

Moreover, the density $h=d\mu/dm$ is nonzero and bounded on $Y$ and
$h|_Y,\,(h|_Y)^{-1}\in\cB_\theta(Y)$. In particular, $\mu(Y)<\infty$.

Furthermore, $\mu ( X ) < \infty$ if and only if $\tau\in L^1(Y,m)$.
\end{lemma}

\begin{proof}
  Standard references include~\cite{AarDenUrb1993} and~\cite[Chapter~4]{Aar1997}.
\end{proof}

\subsection{Assumptions and notations}

Let $X$ be a compact metric space with Borel probability measure $m$.
Throughout, we suppose that $f:X\to X$ is a conservative, exact, Markov map
with Gibbs-Markov first return map $F=f^\tau:Y\to Y$, and
moreover that $f$ and $F$ are topologically mixing.
Let $\mu$ be the $f$-invariant absolutely continuous $\sigma$-finite measure in Lemma~\ref{lem:gm}.
We assume that $\mu(X)=\infty$.
Let $d\ge1$ be a positive integer.

\begin{description}
   \item[H1\label{itm:H1}]
   There exist fixed points $\xi_1,\dots,\xi_d \in X$ such that
   \[
     \mu\big(
     X\setminus \{B_{ \eps} (\xi_1) \cup \dots \cup  B_{\eps} ( \xi_d )\}
     \big)<\infty
     \quad\text{for all $\eps>0$.}
   \]
\end{description}
This assumption implies that neighbourhoods of the fixed points $\xi_1, \dots, \xi_d$ carry the infinite part of the mass. 

The next assumption ensures that the inducing set $Y$ \emph{dynamically separates} $\xi_1,\dots,\xi_d$ and that the excursion times from \( Y \) to neighbourhoods of the fixed points have certain tail distributions.

\begin{description}
  \item[H2\label{itm:H2}]
  There exist constants $\alpha\in(0,1]$, \( c_1,\dots,c_d>0 \)
  and a measurable partition \( X_1,\dots,X_d \) of \( X \setminus Y \)  
such that 
  \begin{enumerate}
    \item[(a)\label{itm:A2.a}]
    $\xi_k \in \Int X_k$, for $k = 1,\dots, d$;
    \item[(b)\label{itm:A2.b}]
    For \( k\neq \ell \), orbits cannot pass from $X_k$ to $X_{\ell}$ without first entering $Y$.
    Equivalently, 
    \[
      \{\tau=n\} = \bigcup_{k=1,\dots,d} \{ \tau^{ (k) } = n - 1 \}
      \quad\text{for $1\le n<\infty$}
    \]
    where $\tau^{(k)} (y) \coloneqq \Card \{ n \leq \tau (y) : f^n y \in X_k \} $, for $y\in Y$, $k = 1, \dots, d$;
    \item[(c)\label{itm:A2.c}]
    $\mu ( \tau^{(k)} > n ) \sim c_k n^{-\alpha}$ as $n\to\infty$ for $k=1,\dots,d$. 
  \end{enumerate}
\end{description}
Note that 
\begin{equation} \label{eq:tails}
  \mu ( \tau > n ) \sim c_{\tau} n^{-\alpha}
  \quad\text{where}\quad
  c_{\tau} \coloneqq c_1+\dots+c_d>0.
\end{equation}

\begin{remark}\label{rmk:H2}
  We expect that the results in this paper remain valid when the constants
  \( c_{ k } \) in~\ref{itm:H2}(c) are replaced by $c_k\ell(n)$ where $\ell$ is \emph{slowly varying}, i.e.\ $\lim_{ x \to \infty } \ell ( \lambda x ) / \ell ( x ) = 1$ for all $\lambda \in \R$. However, the proof of Theorem~\ref{thm:mixing} requires further calculations.
\end{remark}

Our third assumption is somewhat technical to state but is very mild.
Given $a_0,\dots,a_r\in\cP$, we define the $(r+1)$-cylinder
$[a_0,\dots,a_{r}]=\bigcap_{j=0}^{r}f^{-j}a_j$.
Let
$\cP_r$ denote the partition of $X$ into $(r+1)$-cylinders
and
let $\cP_r^*$ consist of $(r+1)$-cylinders $a=[a_0,\dots,a_r]\in\cP_r$ where
$a_0,\dots,a_{r-1}\not\subset Y$ and $a_r\subset Y$.
Given $\rho\in L^1(X,m)$ and $r\ge0$, define
\begin{equation} \label{eq:Q}
  Q_r^\rho:Y\to\R,  \qquad
  Q_r^\rho \coloneqq \sum_{a\in\cP_r^*} 1_{f^ra} \cdot \frac{\rho}{Jf^r}\circ f^r|_a^{-1}
\end{equation} 
where $J\! f^r \coloneqq \frac{ d m \circ f^{ r } }{ dm }$ denotes the Jacobian of $f^r$ with respect to $m$.

\begin{description}
  \item[H3\label{itm:H3}]
  There exists $\theta\in(0,1)$ and a dense subset $K(X)\subset L^1(X,m)$ such that
  $Q_r^\rho\in \cB_\theta(Y)$ for all $\rho\in K(X)$ and $r\ge0$.
\end{description}

Our final assumption is the following ``smooth tail'' estimate:

\begin{description}
  \item[H4\label{itm:H4}] If \( \alpha \in (0, 1 / 2] \), then \( \mu ( \tau = n ) = O ( n ^{ - ( 1+ \alpha  ) }) \). 
\end{description}

\subsection{Almost sure behaviour of the empirical measures}

We let $\cM_1(X)$ denote the space of Borel probability measures on $X$. Define the map
$e_n : X \to \cM_1 ( X )$
which maps each point $x\in X$ to its $n^{th}$ empirical measure $e_n(x)$ as given in~\eqref{eq:en}. 

Our first result concerns the almost sure behaviour of the empirical measures $e_n$. For $x \in X$, we let $\cL(x) \subset \cM_1(X)$ denote the set of weak-$*$ limit points of $e_n(x)$. Let $\cS_0$ be the simplex
\[
  \cS_0 \coloneqq \{ p \in [0,1]^d : p_1 + \dots + p_d  = 1 \}.
\]
For $p \in \cS_0$, let $\nu_{p} \coloneqq p_1 \delta_{ \xi_1 }  + \dots +  p_d \delta_{ \xi_d }$ be the corresponding convex combination of Dirac masses at the fixed points, and define
$$
\cS \coloneqq \{ \nu_{p} : p \in \cS_0 \},
$$
to be the set of all such convex combinations. 
With this notation in place we can state our first result.

\begin{manualtheorem}{A}\label{thm:ae}
  Suppose that \ref{itm:H1}--\ref{itm:H2} hold and that $\alpha \in (0,1)$. Then 
  $$
  \cL (x) = \cS
  \quad\text{for a.e.\ }x\in X.
  $$
  In particular, when $d\ge2$ there are no physical measures.
\end{manualtheorem}

\begin{remark}
  The inclusion $\cL (x) \subset \cS$ is elementary, see Lemma~\ref{lem:w*} below, so it is the reverse inclusion that is of interest.
  The case $d=1$ is trivial: for all $\alpha\in(0,1]$, we have $\cL(x)=\delta_{\xi_1}$ a.e.\ and hence
$\delta_{\xi_1}$ is a physical measure.  The case $d=2$ is also well-understood.
  Indeed, there are techniques~\cite{AarThaZwe2005,CoaLuz2024} for showing that $\delta_{\xi_k}\in\cL(x)$ a.e.\ for each $k$ and Theorem~\ref{thm:ae} then follows for $d=2$ by connectedness of $\cL(x)$. This approach includes the case $\alpha=1$.

  For $d\ge3$, Theorem~\ref{thm:ae} is completely new to the best of our knowledge.
  Our proof involves a different technique which works for all $d$ but only in the range $\alpha\in(0,1)$.
  We expect that Theorem~\ref{thm:ae} holds also for $d\ge3$, $\alpha=1$, but new ideas seem to be required.
\end{remark}

\subsection{Distributional convergence of the empirical measures}

Our second result concerns strong distributional convergence of the sequence $e_n$ of empirical measures.

\textbf{Notation:} 
Suppose that $Z_n$ is a sequence of measurable functions on $X$ taking values in some Borel space $M$ and that $Z$ is a random variable with distribution $\omega \in \cM_1 ( M)$ also taking values in $M$ (but not necessarily defined on $X$). 
Given $\lambda\in\cM_1(X)$,
we write $Z_n \to_\lambda Z$ if 
$\lim_{ n \to \infty }Z_{n*} \lambda = \omega$.

\begin{definition}
Let $\cM^{\mathrm{ac}}_1(X)$ denote the set of Borel probability measures on $X$ which are absolutely continuous with respect to $m$.
We say that
$Z_n$ \emph{converges strongly in distribution} to $Z$, and write
$Z_n\to_dZ$, if $Z_n\to_\lambda Z$ 
for all  $\lambda \in \cM^{\mathrm{ac}}_1 (X)$.
\end{definition}

Fix $\eps$ so that the neighbourhoods $B_{\eps}(\xi_k), \;k=1,\dots,d$ are disjoint.
As was the case for the Thaler maps in Theorem~\ref{thm:tha-dist-conv},
we first consider strong distributional convergence for occupation times
$S_n=(S_n^1,\dots,S_n^d):X\to[0,1]^d$ defined by
\begin{equation} \label{eq:Sn}
  S_n^k \coloneqq \sum_{ j = 0 }^{ n - 1 } 1_{ B_{\eps}(\xi_k) } \circ f^j, \quad \text{for } k = 1, \dots, d, \; n\ge1.
\end{equation}

Following~\cite{SerYan2019,Ser2020}, we consider a multiray generalisation of the classical arcsine law.
For \( \alpha \in (0,1) \), $p\in\cS_0$, let \( \zeta_{1},\dots, \zeta_{d} \) be independent \( [0,\infty) \)-valued random variables defined on a common probability space with one-sided \( \alpha \)-stable distribution characterised for $k=1,\dots,d$ by
\begin{equation}\label{eq:def-zeta}
  \E \exp( - t \zeta_k ) = \exp ( -t^{\alpha} p_k ), \quad t > 0 .
\end{equation}
Define the $\cS_0$-valued random variable
\begin{equation}
  \label{eq:def-Z}
  Z_{\alpha, p}=(Z_{\alpha,p}^{(1)},\dots,Z_{\alpha,p}^{(d)}) \coloneqq \frac{1}{\zeta_1+\dots+\zeta_d} ( \zeta_1, \dots, \zeta_d).
\end{equation}
When $\alpha=1$, we define $Z_{1, p}\equiv  p$.

\begin{remark}
  \label{rem:arcsine-density}
  In the special case that \( d = 2 \), one has that \( Z_{\alpha, p}^{(2)} = 1 - Z_{\alpha, p}^{(1)} \) and for $\alpha\in(0,1)$ the distribution of \( Z_{\alpha, p}^{(1)} \) admits the continuous density in~\eqref{eq:arcsine-density}
  (identifying $ p$ with $ p_1$).
\end{remark}

\begin{remark} The first moment of the multiray arcsine law 
is given by $\E Z_{\alpha,p}=p$.
This can be verified using the
double Laplace formula~\cite[Proposition~2.6]{SerYan2019},
\[
\int_0^\infty e^{-qt} \E\Big( \exp\Big\{-t\sum_{k=1}^d \lambda_k\zeta_k\Big\}\Big)\,dt
=\frac{\sum_{k=1}^d p_k(q+\lambda_k)^{\alpha-1}}
{\sum_{k=1}^d p_k(q+\lambda_k)^{\alpha}}, \quad
\lambda=(\lambda_1,\dots,\lambda_d)\in[0,\infty)^d,\; q>0,
\]
as derived in~\cite[Proposition~3.6]{Yan2017}.
Differentiating w.r.t.\ $\lambda_k$ and setting $\lambda=0$ yields $\E Z_{\alpha,p}^{(k)}=p_k$.
\end{remark}

We can now recall the arcsine law of~\cite{SerYan2019,Ser2020}.
Recall from~\eqref{eq:tails} that $c_\tau=\sum_{k=1}^d c_k$. 
Set 
\[
  \bar{p} = ( \bar p_1,\dots, \bar p_d)
  \coloneqq c_\tau^{-1}( c_1 , \dots,  c_d) \in\cS_0.
\]

\begin{theorem}\label{thm:Sera}
  Suppose that \ref{itm:H1}--\ref{itm:H2} hold. Then 
  $
  \frac1n S_n \to_d Z_{\alpha,\bar p}.
  $
\end{theorem}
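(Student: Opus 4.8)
The plan is to reduce Theorem~\ref{thm:Sera} to the abstract arcsine law of Sera and Yano~\cite{SerYan2019, Ser2020} by checking that the first return map $F = f^\tau : Y \to Y$, together with the decomposition of the return time into the components $\tau^{(1)}, \dots, \tau^{(d)}$, fits their framework. The natural object to study is not $S_n^k$ directly but the occupation time of the sets $X_k$ as seen along excursions from $Y$: by~\ref{itm:H2}(b), an excursion of length $n$ from $Y$ is spent entirely in a single $X_k$, so $\sum_{j=0}^{n-1} 1_{X_k}(f^j x)$ is, up to a bounded error coming from the time spent inside $Y$ itself, a sum over excursions weighted by those excursions which land in $X_k$. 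First I would set up the tower (Kakutani skyscraper) over $(Y, F)$ with height function $\tau$, transfer the problem to this tower, and identify the ``colour'' of each excursion (which $X_k$ it visits) as a function on $Y$ measurable with respect to the Gibbs-Markov partition $\cP_Y$.

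The key input from~\cite{SerYan2019} is a multiray arcsine law for occupation times of a conservative ergodic measure-preserving transformation with a ``$d$-ray'' structure at infinity, under a regular-variation hypothesis on the tails: $\mu(\tau^{(k)} > n) \sim c_k n^{-\alpha}$, which is exactly~\ref{itm:H2}(c). Their result is stated for a base map that is pointwise dual ergodic / has the right mixing properties; here the base $F$ is Gibbs-Markov and topologically mixing, so the required spectral gap / mixing of the transfer operator of $F$ is standard (this is where Lemma~\ref{lem:gm} and the bounded-distortion hypothesis enter). The steps, in order: (i) pass to the tower and rewrite $\frac1n S_n^k$ in terms of the number of the first $N$ excursions (where $N \approx S_n(Y)$, the number of visits to $Y$ up to time $n$) that are $k$-coloured and their lengths; (ii) show the discrepancy between $S_n^k$ and the corresponding ``excursion-counting'' functional is $o(n)$ uniformly, using that time inside $Y$ is negligible on the scale $n$ because $\mu(Y) < \infty$ while $\mu(X) = \infty$; (iii) invoke the Gibbs-Markov mixing of $F$ to verify the hypotheses of~\cite[main theorem]{SerYan2019} and conclude $\frac1n S_n \to_d Z_{\alpha, \bar p}$ with the stable-subordinator representation~\eqref{eq:def-zeta}--\eqref{eq:def-Z}; (iv) upgrade from convergence with respect to $\mu|_Y$ (or a reference measure on $Y$) to strong distributional convergence, i.e.\ convergence for every $\lambda \in \cM_1^{\mathrm{ac}}(X)$. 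For step (iv) one uses that strong distributional convergence is insensitive to the choice of absolutely continuous reference measure — a standard fact for conservative ergodic systems, going back to Aaronson, which lets one replace $\lambda$ by $m|_Y$ normalised; alternatively one cites that~\cite{SerYan2019,Ser2020} already phrase their conclusion as strong distributional convergence. The $\alpha = 1$ case is degenerate: $Z_{1,p} \equiv p$, and $\frac1n S_n^k \to \bar p_k$ follows from the ratio ergodic theorem together with $\mu(\tau^{(k)} > n) \sim c_k n^{-1}$ forcing $\sum_{j<\tau}1_{X_k}$ to have the right relative mean; this should be handled as a short separate remark.

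The main obstacle I expect is step (ii) together with making the coloured-excursion bookkeeping precise: one must control the fluctuations of the number of excursions $N$ as a function of $n$ (a renewal-type estimate) and show that the contribution of incomplete excursions and of the time spent in $Y$ does not affect the weak limit. Under $\alpha \in (0,1)$ the number of excursions up to time $n$ is itself of order $n^\alpha$-ish relative to the ergodic sums, and $S_n(Y) \asymp$ a regularly varying sequence, so some care with the Darling–Kac / Aaronson machinery is needed — but all of this is exactly the setting in which~\cite{SerYan2019} operate, so the cleanest route is to quote their theorem wholesale and spend the proof only on verifying that $(Y,F,\cP_Y)$ with the colouring $\{\tau^{(k)}\}$ is an admissible input, i.e.\ that~\ref{itm:H1}, \ref{itm:H2}(a)--(c) translate verbatim into their hypotheses. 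If their theorem is stated for a fixed reference measure rather than for strong distributional convergence, the remaining work is the reference-measure-change argument, which is routine for Gibbs-Markov towers.
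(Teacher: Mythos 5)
Your proposal is correct and matches the paper's proof in essence: the paper likewise reduces the theorem to the abstract multiray arcsine law of Sera (and Sera--Yano) by checking that \ref{itm:H2}(b) gives their dynamical-separation assumption, \ref{itm:H2}(c) gives the tail/regular-variation assumption, and topological mixing of the Gibbs--Markov map $F$ gives the required (continued-fraction) mixing condition, with strong distributional convergence already built into their conclusion. The tower/excursion bookkeeping you sketch is internal to their proof and not needed once the hypotheses are verified, exactly as you anticipate in your final paragraph.
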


\begin{proof}
  It suffices to check that \ref{itm:H1}--\ref{itm:H2} imply \cite[Assumptions 2.1, 2.2 and 2.6]{Ser2020} as the result then follows from \cite[Corollary 4.2 and Theorem 3.3]{Ser2020}. 
  Let \( F:Y\to Y \) be the Gibbs-Markov first return map in  \ref{itm:H2}. Notice that \ref{itm:H2}(b) yields \cite[Assumption 2.1]{Ser2020}, and then \ref{itm:H2}(c) together with \cite[Lemma 2.4]{Ser2020} gives \cite[Assumption 2.2]{Ser2020}.
  Finally, as \( F \) is topologically mixing it is exponentially continued fraction mixing (see for example \cite[Section~4]{Aar1997}) which immediately implies \cite[Assumption 2.6]{Ser2020}.
\end{proof}

To each $\cS_0$-valued random variable $Z$, we can associate the $\cS$-valued random variable $\nu_Z=Z_1\delta_{\xi_1}+\dots+Z_d\delta_{\xi_d}$.
Notice that
$\E\nu_Z=\sum_{k=1}^d \E Z_k\delta_{\xi_k}
=\sum_{k=1}^d (\E Z)_k\delta_{\xi_k}
=\nu_{\E Z}$.
In particular, $\E \nu_{Z_{\alpha,p}}=\nu_p$.

\begin{manualtheorem}{B}\label{thm:dist-conv}
  Suppose that \ref{itm:H1}--\ref{itm:H2} hold. Then $ e_n \to_d \nu_{Z_{\alpha,\bar p}}$. 
  Equivalently,
  \(
  \lim_{ n \to \infty } e_{n*} \lambda = \omega 
  \)
  for all \( \lambda \in \cM_1^{ \mathrm{ac} } ( X )\)
  where $\omega\in\cM_1(\cS)$ is the distribution of $\nu_{Z_{\alpha,\bar p}}$.
\end{manualtheorem}

As in the introduction, we obtain the following consequence:
\begin{corollary}\label{cor:avg-bir}
Let $\lambda\in \cM_1^{ \mathrm{ac} } ( X )$.
Then

\noindent (a) 
  $\frac{1}{n} \sum_{ j = 0 }^{ n - 1 } f^j_*\lambda= \int e_n  \, d \lambda  = 
 \E(e_{n*}\lambda)$ for all $n\ge1$.

\noindent (b) The common limit of the sequences of measures in (a) is
given by
$\lim_{ n \to \infty } \E(e_{n*}\lambda)=  \nu_{ \bar{p} }$.
\end{corollary}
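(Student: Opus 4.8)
The plan is to settle part (a) by unwinding the definitions of $\int e_n\,d\lambda$ and $\E(e_{n*}\lambda)$, and then to deduce part (b) from Theorem~\ref{thm:dist-conv} by passing to expectations (first moments) in the distributional convergence.

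For part (a), recall that $X$ is a compact metric space, so $\cM_1(X)$ is compact and metrizable in the weak-$*$ topology; it therefore suffices to test measures against continuous functions $\varphi:X\to\R$. Fix such a $\varphi$ and write $\Phi_\varphi:\cM_1(X)\to\R$ for the map $\Phi_\varphi(\nu)=\int\varphi\,d\nu$, which is continuous and bounded with $\|\Phi_\varphi\|_\infty\le\|\varphi\|_\infty$. Since $e_n(x)=\frac1n\sum_{j=0}^{n-1}\delta_{f^jx}$, interchanging the order of integration gives
\[
\int_X \Phi_\varphi(e_n(x))\,d\lambda(x)
=\frac1n\sum_{j=0}^{n-1}\int_X \varphi\circ f^j\,d\lambda
=\int\varphi\,d\Big(\tfrac1n\sum_{j=0}^{n-1}f^j_*\lambda\Big),
\]
which identifies $\int e_n\,d\lambda$ with $\tfrac1n\sum_{j=0}^{n-1}f^j_*\lambda$. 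On the other hand, the change-of-variables formula for the pushforward $e_{n*}\lambda$ gives $\int_{\cM_1(X)}\Phi_\varphi\,d(e_{n*}\lambda)=\int_X \Phi_\varphi(e_n(x))\,d\lambda(x)$, and by the definition of the expectation of $e_{n*}\lambda$ as a measure on the convex set $\cM_1(X)$ the left-hand side equals $\int\varphi\,d\E(e_{n*}\lambda)$. As $\varphi$ was arbitrary, the three measures coincide, proving (a).

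For part (b), Theorem~\ref{thm:dist-conv} gives $e_{n*}\lambda\to\omega$ in $\cM_1(\cM_1(X))$, where $\omega$ is the law of $\nu_{Z_{\alpha,\bar p}}$. Since $\Phi_\varphi$ is bounded and continuous on $\cM_1(X)$, weak-$*$ convergence yields
\[
\int\varphi\,d\E(e_{n*}\lambda)=\int_{\cM_1(X)}\Phi_\varphi\,d(e_{n*}\lambda)
\;\longrightarrow\;\int_{\cM_1(X)}\Phi_\varphi\,d\omega
=\E\Big(\int\varphi\,d\nu_{Z_{\alpha,\bar p}}\Big)
=\int\varphi\,d\,\E\big(\nu_{Z_{\alpha,\bar p}}\big).
\]
By the identity recorded just before Theorem~\ref{thm:dist-conv}, $\E\nu_{Z_{\alpha,p}}=\nu_{\E Z_{\alpha,p}}=\nu_p$, so the right-hand side equals $\int\varphi\,d\nu_{\bar p}$. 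Letting $\varphi$ range over $C(X)$ shows $\E(e_{n*}\lambda)\to\nu_{\bar p}$, which by (a) is then the common limit of all three sequences. (When $\alpha=1$ the variable $Z_{1,\bar p}\equiv\bar p$ is deterministic, so $\omega=\delta_{\nu_{\bar p}}$ and the convergence is immediate.)

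There is no genuine obstacle here: the substance is entirely contained in Theorem~\ref{thm:dist-conv}. The only points needing a little care are the measure-theoretic bookkeeping — justifying the interchange of integration and the change-of-variables formula, and matching $\E(e_{n*}\lambda)$ with the barycentre of $e_{n*}\lambda$ — together with the elementary remark that distributional convergence upgrades automatically to convergence of first moments, because everything takes values in the compact space $\cM_1(X)$ and so no uniform integrability issue arises. (Alternatively, part (b) can be obtained without the arcsine law: if Theorem~\ref{thm:mixing} is assumed available, then $f^n_*\lambda\to\nu_{\bar p}$, and (b) follows from (a) since the Cesàro averages of a convergent sequence of measures converge to the same limit.)
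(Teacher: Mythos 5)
Your proof is correct and follows essentially the same route as the paper: part (a) is the same bookkeeping (you test against continuous functions where the paper tests against sets), and part (b) is the paper's appeal to the continuity of the barycentre map $\E:\cM_1(\cM_1(X))\to\cM_1(X)$, which you simply unfold explicitly via the functionals $\Phi_\varphi$ before invoking $\E\nu_{Z_{\alpha,\bar p}}=\nu_{\bar p}$.
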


\begin{proof}
Let $i:\cM_1(X)\to \cM_1(X)$ be the identity map.
Then
\[
\E(e_{n*}\lambda)=\int_{\cM_1(X)}\omega_0\,d (e_{n*}\lambda)(\omega_0)
=\int_{\cM_1(X)}i(\omega_0)\,d (e_{n*}\lambda)(\omega_0)
=\int_X i\circ e_n(x)\,d \lambda(x)
=\int_X e_n\,d \lambda.
\]

Next,
\[
\bigg(\int_X \delta_{f^jx}\,d\lambda(x)\bigg)(E)
=
\int_X \delta_x(E)\,df_*^j\lambda(x)
=
\int_X 1_E \, df_*^j\lambda=(f_*^j\lambda)(E),
\]
so
  $\int_X \delta_{f^jx} \,d\lambda(x)=f^j_*\lambda$.
Hence
\[
    \int_X e_n  \, d \lambda= \frac{1}{n} \sum_{ j = 0 }^{ n - 1 } \int_X \delta_{f^jx} \,d\lambda(x).
  =\frac{1}{n} \sum_{ j = 0 }^{ n - 1 } f^j_*\lambda.
\]

\noindent(b)
  Since $\E$ acts continuously from $\cM_1(\cM_1(X))$ to $\cM_1(X)$, it follows from 
Theorem~\ref{thm:dist-conv} that
  $\lim_{ n \to \infty } \E(e_{n*}\lambda)= \E \nu_{Z_{\alpha,\bar p}} = \nu_{ \bar{p} }.$
\end{proof}

\subsection{Existence and uniqueness of natural measures}

Our third result concerns weak-$*$ convergence of pushforwards $f^n_*\lambda$
of absolutely continuous probability measures $\lambda\in\cM_1^{\mathrm ac}(X)$.

\begin{manualtheorem}{C}\label{thm:mixing}
  Suppose that \ref{itm:H1}--\ref{itm:H4} hold with $\alpha\in(0,1]$.
  Then 
  $$
  \lim_{ n \to \infty} f_*^n \lambda = \nu_{\bar{p}}
  \quad\text{for all }\lambda\in \cM_1^{\mathrm ac}(X).
  $$
\end{manualtheorem}

As in the introduction, we obtain the following result on decay of correlations:

\begin{corollary}\label{cor:mixing}
  Suppose $\varphi:X\to\R$ is bounded, measurable, and continuous at each $\xi_1,\dots, \xi_d$.
  Then
  $$
  \lim_{n\to\infty}	\int \psi \cdot \varphi \circ f^n \, d m = \int \psi \, d m \int \varphi \, d \nu_{\bar{p}}
  \quad\text{for all } \psi \in L^1 (X, m  ). 
  $$
  Equivalently,
  $
  \lim_{n \to \infty } \int  \psi \cdot \varphi \circ f^n \, d \mu = \int  \psi \, d \mu \int \varphi \, d \nu_{ \bar{p} }$
  for all  $\psi \in L^1 (X, \mu  ). 
  $
\end{corollary}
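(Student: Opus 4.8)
The plan is to deduce Corollary~\ref{cor:mixing} from Theorem~\ref{thm:mixing} by a portmanteau-type argument, exploiting that the limiting measure $\nu_{\bar p}$ is supported on the finite set $\{\xi_1,\dots,\xi_d\}$, which is contained in the set of continuity points of $\varphi$.

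First I would reduce to the case $\psi\ge0$, $\int\psi\,dm=1$. Both sides of the asserted identity are linear and positively homogeneous in $\psi$, and when $\int\psi\,dm=0$ (so $\psi=0$ $m$-a.e.) both sides vanish; writing $\psi=\psi^+-\psi^-$ and normalising thus reduces to the stated case. For such $\psi$, put $\lambda\coloneqq\psi\,dm\in\cM_1^{\mathrm{ac}}(X)$; since $\varphi$ is bounded, $\psi\cdot\varphi\circ f^n\in L^1(m)$, and a change of variables gives $\int\psi\cdot\varphi\circ f^n\,dm=\int\varphi\circ f^n\,d\lambda=\int\varphi\,d(f_*^n\lambda)$. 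By Theorem~\ref{thm:mixing}, $f_*^n\lambda\to\nu_{\bar p}$ in the weak-$*$ topology, so it remains to show $\int\varphi\,d(f_*^n\lambda)\to\int\varphi\,d\nu_{\bar p}=\sum_{k=1}^d\bar p_k\varphi(\xi_k)$.

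This is the one substantive step, and the point is to accommodate the fact that $\varphi$ is merely continuous at the $\xi_k$ rather than globally continuous. Fix $\eps>0$. Using continuity of $\varphi$ at each $\xi_k$, choose $\delta>0$ so small that the balls $B_\delta(\xi_k)$, $k=1,\dots,d$, are pairwise disjoint and $|\varphi-\varphi(\xi_k)|<\eps$ on $B_\delta(\xi_k)$; then pick continuous functions $g_k\colon X\to[0,1]$ with $\supp g_k\subset B_\delta(\xi_k)$ and $g_k(\xi_k)=1$, and set $g_0\coloneqq1-\sum_{k=1}^d g_k\in[0,1]$. Decomposing $\varphi=\varphi g_0+\sum_k\varphi g_k$ and integrating against $f_*^n\lambda$, I would use that $g_0,g_k$ are bounded continuous — so by weak-$*$ convergence $\int g_0\,d(f_*^n\lambda)\to\int g_0\,d\nu_{\bar p}=0$ and $\int g_k\,d(f_*^n\lambda)\to\int g_k\,d\nu_{\bar p}=\bar p_k$ — together with the elementary bounds $|\varphi g_0|\le\|\varphi\|_\infty g_0$ and $|\varphi g_k-\varphi(\xi_k)g_k|\le\eps g_k$, to obtain $\limsup_{n\to\infty}\bigl|\int\varphi\,d(f_*^n\lambda)-\sum_k\bar p_k\varphi(\xi_k)\bigr|\le d\eps$. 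Since $\eps>0$ is arbitrary, the desired limit follows; combined with the previous paragraph this proves the first assertion of the Corollary.

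Finally, for the equivalent statement in terms of $\mu$, I would use the ($m$-a.e.\ positive) density $h=d\mu/dm$ provided by Lemma~\ref{lem:gm}: the map $\psi\mapsto\psi h$ is a bijection between $L^1(\mu)$ and $L^1(m)$ with $\int\psi\,d\mu=\int\psi h\,dm$, and $\int\psi\cdot\varphi\circ f^n\,d\mu=\int(\psi h)\cdot\varphi\circ f^n\,dm$; so applying the $m$-version to $\psi h$ (and conversely the $\mu$-version to $\psi/h$) gives the two formulations of the Corollary. The only step requiring care is the weak-$*$-to-integral passage for the discontinuous test function $\varphi$, but the finite support of $\nu_{\bar p}$ makes the cutoff argument routine; everything else is linearity and a change of variables.
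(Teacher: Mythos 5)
Your proof is correct and follows essentially the same route as the paper: reduce to $\psi\ge 0$ with $\int\psi\,dm=1$, apply Theorem~\ref{thm:mixing} to $\lambda=\psi\,dm$, upgrade weak-$*$ convergence to convergence of $\int\varphi\,d(f_*^n\lambda)$ using that $\nu_{\bar p}$ is carried by the continuity points of $\varphi$, and pass between $m$ and $\mu$ via the density $h$. The only cosmetic difference is that where the paper invokes \cite[Theorem~25.7]{Bil1995} for the portmanteau step, you prove the needed special case directly with the bump functions $g_k$, and you handle signed $\psi$ via $\psi=\psi^+-\psi^-$ rather than the paper's normalisation-plus-approximation.
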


\begin{proof}
  We begin by proving that the first limit holds.

  By Theorem~\ref{thm:mixing}, $f_*^n\lambda\to\nu_{\bar p}$ weak-$^*$, so by definition  
  $\int\varphi\,df_*^n\lambda\to\int\varphi\,d\nu_{\bar p}$ for all continuous $\varphi:X\to\R$.
  By~\cite[Theorem~25.7]{Bil1995},
  $\int\varphi\,df_*^n\lambda\to\int\varphi\,d\nu_{\bar p}$ for 
  all bounded measurable functions
  ${\varphi:X\to\R}$ that are continuous except on a set of $\nu_{\bar p}$-measure zero, which is precisely the class of observables in the statement of the corollary.

  Fix such a $\varphi$.
  Let $\psi\in L^1(X,m)$ with $\int\psi\,dm\neq0$ and write $\psi=\rho\int\psi\,dm$ where $\rho\in L^1(X,m)$ and $\int\rho\,dm=1$.
  Let $d\lambda=\rho\,dm$. 
  Then
  $$
  (\textstyle{\int} \psi \, dm )^{-1}
  \int \psi \cdot \varphi \circ f^n \, dm = 
  \int \rho \cdot \varphi \circ f^n \, dm = 
  \int \varphi \circ f^n \, d\lambda = 
  \int \varphi \, df_*^n\lambda \to \int\varphi\,d\nu_{\bar p}.
  $$
  The case $\int\psi\,dm=0$ is dealt with by approximating, concluding the proof of the first limit.

  If $\psi\in L^1(X,\mu)$, then $\psi h\in L^1(X,m)$ and the second limit follows from the first. Similarly, the first limit follows from the second.
\end{proof}

\section{Proofs}
\label{sec:proofs}

In this section, we prove the three main theorems in this paper.
It turns out to be convenient to prove them in the order B, A, C.

\subsection{Ergodic theorem}

In this subsection, we recall the ergodic theorem for infinite measure systems and derive a consequence for the systems studied in this paper.

\begin{lemma}
  \label{lem:aar}
  Assume that \ref{itm:H1} holds.
  If $\varphi \in L^1 (X, \mu )$, then
  $\lim_{n\to\infty} \frac{1}{n} \sum_{ j =0 }^{ n - 1 } \varphi \circ f^j  = 0$ almost everywhere.
\end{lemma}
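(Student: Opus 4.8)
The plan is to deduce the statement from the infinite-measure (Hopf) ratio ergodic theorem, the one extra input being that Birkhoff sums of an $L^1(\mu)$ observable normalised by $n$ must vanish — this is where the hypothesis $\mu(X)=\infty$ actually does the work.

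Concretely, I would first use \ref{itm:H1} to fix $\eps>0$ small enough that $A\coloneqq X\setminus\big(B_{\eps}(\xi_1)\cup\dots\cup B_{\eps}(\xi_d)\big)$ has $\mu(A)<\infty$; note $\mu(A)>0$, since otherwise $\mu$ would be carried by the finite set $\{\xi_1,\dots,\xi_d\}$, contradicting $\mu(X)=\infty$. (One could equally take $A=Y$ from Lemma~\ref{lem:gm}.) Since $f$ is conservative and ergodic and $0<\mu(A)<\infty$, Hopf's ratio ergodic theorem (see e.g.\ \cite[Ch.~2]{Aar1997}) gives, for every $\varphi\in L^1(\mu)$,
\[
\frac{\sum_{j=0}^{n-1}\varphi\circ f^{j}}{\sum_{j=0}^{n-1}1_A\circ f^{j}}\ \longrightarrow\ \frac{\int\varphi\,d\mu}{\mu(A)}\qquad\text{a.e.},
\]
the denominator being positive for all large $n$ a.e., again by conservativity and ergodicity. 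Hence it suffices to show that $\tfrac1n\sum_{j=0}^{n-1}1_A\circ f^{j}\to 0$ a.e.

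For that step, let $f_A:A\to A$ be the first return map with return time $\tau_A$; it preserves the finite measure $\mu|_A$ and is ergodic. By Kac's formula and ergodicity, $\int_A\tau_A\,d\mu=\mu(X)=\infty$, so $\tau_A\notin L^1(\mu|_A)$. Applying Birkhoff's theorem on the finite measure space $(A,\mu|_A)$ to the truncations $\min(\tau_A,M)$ and letting $M\to\infty$ shows that the $k$-th return time $\sum_{i=0}^{k-1}\tau_A\circ f_A^{i}$ grows faster than linearly in $k$ a.e.\ on $A$; since $\sum_{j=0}^{n-1}1_A\circ f^{j}$ is the number of visits to $A$ before time $n$, this forces $\tfrac1n\sum_{j=0}^{n-1}1_A\circ f^{j}\to 0$ a.e.\ on $A$. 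Finally, $\sum_{j=0}^{n-1}1_A\circ f^{j+1}$ differs from $\sum_{j=0}^{n-1}1_A\circ f^{j}$ by the bounded term $1_A\circ f^{n}-1_A$, so $\limsup_n\tfrac1n\sum_{j=0}^{n-1}1_A\circ f^{j}$ is $f$-invariant, hence a.e.\ constant by ergodicity, hence $\equiv 0$ on $X$. Alternatively, this vanishing may simply be quoted from Aaronson's theory of infinite ergodic averages \cite[Ch.~2]{Aar1997}.

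I expect no serious obstacle: the argument is standard, and its only genuine content is the last vanishing statement, which encodes the input $\mu(X)=\infty$ via the non-integrability of return times. The one point requiring a little care is the passage from "a.e.\ on $A$" to "a.e.\ on $X$", handled above by the $f$-invariance of $\limsup_n\tfrac1n\sum_{j=0}^{n-1}1_A\circ f^{j}$.
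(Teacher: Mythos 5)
Your proof is correct and follows exactly the route the paper takes: the paper simply cites the Hopf ratio ergodic theorem (via \cite[Exercise 2.2.1]{Aar1997}), and your argument is the standard unpacking of that citation — ratio theorem against a finite-measure reference set, vanishing of $\tfrac1n\sum_{j<n}1_A\circ f^j$ from non-integrability of the return time via Kac, and the $f$-invariance of the $\limsup$ to pass from $A$ to $X$.
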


\begin{proof}
  This is an easy consequence of the Hopf ratio ergodic theorem, see for example~\cite[Exercise 2.2.1]{Aar1997}.
\end{proof}

\begin{lemma} \label{lem:w*}
  Assume that \ref{itm:H1} holds.
  Then $\cL (x) \subset \cS$ for almost every $x\in X$.
\end{lemma}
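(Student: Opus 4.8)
The plan is to combine Lemma~\ref{lem:aar} with the finiteness hypothesis~\ref{itm:H1}: the point is that continuous functions vanishing near the fixed points $\xi_1,\dots,\xi_d$ are $\mu$-integrable, so their Birkhoff averages vanish almost everywhere, which forces every accumulation point of $e_n(x)$ to put no mass away from $\{\xi_1,\dots,\xi_d\}$.

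Concretely, I would first fix $\eps_0>0$ such that $\mu\big(X\setminus\bigcup_{k=1}^d B_\eps(\xi_k)\big)<\infty$ for all $\eps\in(0,\eps_0)$, as provided by~\ref{itm:H1}. For each rational $\eps\in(0,\eps_0/2)$, the sets $\bigcup_k\overline{B_\eps(\xi_k)}$ and $X\setminus\bigcup_k B_{2\eps}(\xi_k)$ are disjoint and closed, so Urysohn's lemma yields a continuous function $\chi_\eps:X\to[0,1]$ with $\chi_\eps\equiv1$ on $X\setminus\bigcup_k B_{2\eps}(\xi_k)$ and $\chi_\eps\equiv0$ on $\bigcup_k\overline{B_\eps(\xi_k)}$. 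Then $\{\chi_\eps\neq0\}\subset X\setminus\bigcup_k B_\eps(\xi_k)$, so $\int|\chi_\eps|\,d\mu\le\mu\big(X\setminus\bigcup_k B_\eps(\xi_k)\big)<\infty$, i.e.\ $\chi_\eps\in L^1(\mu)$. By Lemma~\ref{lem:aar}, there is a full measure set $N_\eps\subset X$ on which $\int\chi_\eps\,de_n(x)=\frac1n\sum_{j=0}^{n-1}\chi_\eps\circ f^j\to0$. Put $N\coloneqq\bigcap_{\eps\in\Q\cap(0,\eps_0/2)}N_\eps$, a countable intersection of full measure sets, hence of full measure.

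Now fix $x\in N$. Since $X$ is compact, $\cM_1(X)$ is weak-$*$ compact (and metrizable), so $\cL(x)$ is non-empty and every $\nu\in\cL(x)$ is a probability measure. Take $\nu\in\cL(x)$, say $e_{n_i}(x)\to\nu$. For each rational $\eps\in(0,\eps_0/2)$, boundedness and continuity of $\chi_\eps$ give $\int\chi_\eps\,d\nu=\lim_i\int\chi_\eps\,de_{n_i}(x)=0$, and since $\chi_\eps\ge 1_{X\setminus\bigcup_k B_{2\eps}(\xi_k)}$ this forces $\nu\big(X\setminus\bigcup_k B_{2\eps}(\xi_k)\big)=0$. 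Letting $\eps\downarrow0$ along the rationals and using continuity of $\nu$ from below, $\nu\big(X\setminus\{\xi_1,\dots,\xi_d\}\big)=0$. As $\nu$ is a probability measure, $\nu=\sum_{k=1}^d p_k\delta_{\xi_k}$ with $p_k\coloneqq\nu(\{\xi_k\})\ge0$ and $\sum_k p_k=1$, so $p\in\cS_0$ and $\nu=\nu_p\in\cS$. Hence $\cL(x)\subset\cS$ for every $x\in N$, which proves the lemma.

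This argument is elementary, as remarked in the paper; the only essential use of hypothesis~\ref{itm:H1} is to guarantee $\chi_\eps\in L^1(\mu)$ so that Lemma~\ref{lem:aar} applies, and the only mild technical points are the existence of weak-$*$ accumulation points (Banach--Alaoglu/Prokhorov on the compact metric space $X$) and the passage to the limit $\eps\to0$. I do not anticipate any genuine obstacle.
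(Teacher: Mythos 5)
Your proof is correct and follows essentially the same route as the paper's: a continuous cutoff function vanishing near the fixed points is $\mu$-integrable by~\ref{itm:H1}, so Lemma~\ref{lem:aar} kills its Birkhoff averages, forcing every weak-$*$ limit point of $e_n(x)$ to concentrate on $\{\xi_1,\dots,\xi_d\}$. Your explicit countable intersection over rational $\eps$ and the final identification of the limit as an element of $\cS$ are details the paper leaves implicit, but the argument is the same.
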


\begin{proof}
  We provide the details for completeness (cf.~\cite[p.~963]{AarThaZwe2005}).

  For $\eps>0$, define  $X^{(\eps)} \coloneqq X\setminus \{B_{ \eps} (\xi_1) \cup \dots \cup  B_{\eps} ( \xi_d )\}$.
  By \ref{itm:H1}, $\mu(X^{(\eps)})<\infty$ for all ${\eps>0}$.
  Choose $\psi:X\to[0,1]$ continuous and supported in $X^{(\eps/2)}$ such that $\psi|X^{(\eps)}\equiv1$.
  In particular, $\int_X\psi\,d\mu<\infty$ so, by Lemma~\ref{lem:aar}, there exists $X'\subset X$ with $\mu(X\setminus X')=0$ such that $\lim_{n\to\infty}\frac{1}{n}\sum_{j=0}^{n-1}\psi(f^jx)=0$
  for all $x\in X'$.

  Suppose that $x\in X'$ and $\omega_0\in \cL(x)$, and choose a subsequence $n_i$ such that $e_{n_i}(x)\to \omega_0$. Then $\frac{1}{n_i}\sum_{j=0}^{n_i-1}\psi(f^jx)\to\int_X\psi\,d\omega_0$,
  yielding
  $\omega_0(X^{(\eps)})\le\int_X\psi\,d\omega_0=0$.
  Hence, elements of $\cL(x)$ are supported in
  $B_\eps(x_1)\cup\dots\cup B_\eps(x_d)$ for all $x\in X'$.
  Since $\eps>0$ can be chosen arbitrarily small, the result follows.
\end{proof}

\begin{corollary}\label{cor:w*}
  Assume that \ref{itm:H1} holds.
  Suppose that $e_{n_k}\to_m Z$ for some $\cM_1(X)$-valued random variable $Z$ and some subsequence $n_k$. Then $Z$ takes values in $\cS$.
\end{corollary}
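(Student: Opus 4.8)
The plan is to transfer the almost-sure information of Lemma~\ref{lem:w*} to the distributional limit by pushing everything through a single weak-$*$ continuous functional on $\cM_1(X)$.

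First I would fix a small $\eps>0$ and, exactly as in the proof of Lemma~\ref{lem:w*}, write $X^{(\eps)}=X\setminus\{B_{\eps}(\xi_1)\cup\dots\cup B_{\eps}(\xi_d)\}$ and choose a continuous $\psi:X\to[0,1]$ with $\psi|X^{(\eps)}\equiv1$ and $\supp\psi\subset X^{(\eps/2)}$; by \ref{itm:H1} this $\psi$ lies in $L^1(\mu)$. The map $\Phi:\cM_1(X)\to\R$, $\Phi(\nu)\coloneqq\int_X\psi\,d\nu$, is weak-$*$ continuous, and $\Phi(e_n(x))=\frac1n\sum_{j=0}^{n-1}\psi(f^jx)$, which tends to $0$ almost everywhere by Lemma~\ref{lem:aar}. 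Since almost sure convergence implies convergence in distribution, $(\Phi\circ e_n)_*m\to\delta_0$ along the full sequence, hence along $n_k$.

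On the other hand, $e_{n_k}\to_m Z$ means $(e_{n_k})_*m\to\omega$ where $\omega$ is the law of $Z$, so by the continuous mapping theorem $(\Phi\circ e_{n_k})_*m=\Phi_*\big((e_{n_k})_*m\big)\to\Phi_*\omega$. Uniqueness of weak-$*$ limits forces $\Phi_*\omega=\delta_0$, i.e.\ $\int_X\psi\,dZ=0$ almost surely; since $\psi\ge 1_{X^{(\eps)}}$ this gives $Z(X^{(\eps)})=0$ almost surely. Letting $\eps$ decrease to $0$ along a sequence and intersecting the countably many resulting full-measure events, almost surely $Z\big(X\setminus\{\xi_1,\dots,\xi_d\}\big)=0$, and hence $Z=\sum_{k=1}^d Z(\{\xi_k\})\,\delta_{\xi_k}\in\cS$, as required.

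The only mildly delicate point is that Lemma~\ref{lem:aar} naturally gives $\mu$-almost everywhere convergence whereas the argument above needs convergence in $m$-probability; this is not an issue because $m$ and $\mu$ share the same null sets (the density $h=d\mu/dm$ is positive $m$-a.e.), precisely as is already used implicitly in the proof of Lemma~\ref{lem:w*}. I do not foresee any other obstacle.
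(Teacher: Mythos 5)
Your argument is correct and follows essentially the same route as the paper: both transfer the almost-sure vanishing of Birkhoff averages of observables supported away from the fixed points (Lemma~\ref{lem:aar}) to the distributional limit via a weak-$*$ continuous test functional. The only cosmetic difference is that the paper packages the a.e.\ input as Lemma~\ref{lem:w*} and tests against continuous functions on $\cM_1(X)$ supported off $\cS$, whereas you test against the linear functionals $\nu\mapsto\int\psi\,d\nu$ induced by continuous $\psi$ on $X$ and then let $\eps\downarrow0$; your closing remark about $m$ and $\mu$ being equivalent is the same point the paper uses implicitly.
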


\begin{proof}
  Let $\psi:\cM_1(X)\to\R$ be continuous and supported in $\cM_1(X)\setminus \cS$.  By the dominated convergence theorem,
  $$
  \int_{X} \psi(e_{n_k} (x) )\,dm (x) 
  \to \E \, \psi(Z) 
  .
  $$
  But $\psi(e_{n_k}) \to 0$ a.e.\ by Lemma~\ref{lem:w*}. Applying the dominated convergence theorem once more, $\E\, \psi(Z)=0$, and the result follows.
\end{proof}

\begin{remark} \label{rem:w*}
  Corollary~\ref{cor:w*} shows that the only possible distributional limit points of the sequence $e_n$ are random variables of the form $\nu_Z$  where $Z$ is an $\cS_0$-valued random variable.
\end{remark}

We recall also the following consequence of exactness.

\begin{lemma} \label{lem:exact}
Assume that~\ref{itm:H1} holds. Let $\lambda\in \cM_1^{\mathrm{ac}}(X)$. 
Suppose that $f_*^{n_k}\lambda\to \omega_0$ as $k \to \infty$ for 
some $\omega_0\in \cM_1(X)$
and some subsequence $n_k$.
Then $\omega_0\in\cS$.
\end{lemma}

\begin{proof}
We show that $\omega_0(E)=0$ for all $E\subset X$ with $\mu(E)<\infty$. 
The result then follows from~\ref{itm:H1}.
Our argument follows~\cite[Appendix~A]{BonLen2021}.

Write $\rho=d\lambda/dm$ and recall that $h=d\mu/dm$.
Then 
\[
f_*^n\lambda(E)=\int_X 1_E\circ f^n\,d\lambda=
\int_X 1_E\circ f^n\cdot \rho\,dm=
\int_X 1_E\circ f^n\cdot \tilde\rho\,d\mu,
\]
where $\tilde \rho=\rho h^{-1}$ satisfies $\int_X\tilde\rho\,d\mu=\int_X\rho\,dm=1$.
Let $A\subset X$ with $\mu(A)\in(0,\infty)$. Then
$g\coloneqq \tilde\rho-\mu(A)^{-1}1_A\in L^1(X,\mu)$ with $\int_X g\,d\mu=0$.
Hence $\lim_{n\to\infty}\int_X 1_E\circ f^n \cdot g\,d\mu=0$ by Lin's Theorem~\cite{Lin1971}.
Writing
\[
\int_X 1_E\circ f^n\cdot \tilde\rho\,d\mu=
\int_X 1_E\circ f^n\cdot g\,d\mu+
\int_X 1_E\circ f^n\cdot \mu(A)^{-1}1_A\,d\mu,
\]
we deduce that
\[
\limsup_{k\to\infty} 
f_*^{n_k}\lambda(E)
\le \limsup_{k\to\infty}\Big|\int_X 1_E\circ f^{n_k}\,d\mu\Big|\, \mu(A)^{-1}
= \mu(E) \mu(A)^{-1}
\]
The result follows since $\mu(E)<\infty$ and $\mu(A)\in(0,\infty)$ is arbitrarily large.
\end{proof}

\subsection{Proof of Theorem \ref{thm:dist-conv}}

Fix $\eps>0$ such that $B_\eps(\xi_k)$ are disjoint for $k=1,\dots,d$ and
define $S_n:X\to[0,1]^d$ as in~\eqref{eq:Sn}.

\begin{lemma}
  \label{lem:second}
  Let $Z$ be a random variable with values in $\cS_0$.
  Then 
  $e_n\to_m \nu_Z$ if and only if $n^{-1}S_n\to_m Z$.
\end{lemma}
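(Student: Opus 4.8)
The plan is to show that the pushforward of $e_n$ and the pushforward of $n^{-1}S_n$ carry the same asymptotic information by exploiting the fact that, along any subsequence realizing a distributional limit, the limiting random measure is supported on $\cS$ (Corollary~\ref{cor:w*}). First I would set up the natural ``projection'' map: define $\pi:\cM_1(X)\to[0,1]^d$ by $\pi(\omega)=(\omega(B_\eps(\xi_1)),\dots,\omega(B_\eps(\xi_d)))$. This is measurable but not continuous on all of $\cM_1(X)$; however, it is continuous at every $\omega\in\cS$ since the boundary $\partial B_\eps(\xi_k)$ has zero $\nu_p$-mass for $\nu_p\in\cS$ (the $\xi_k$ lie in the interiors). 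Note also that $\pi\circ e_n = n^{-1}S_n$ by the definitions in~\eqref{eq:en} and~\eqref{eq:Sn}, and that $\pi$ restricted to $\cS$ is a homeomorphism onto $\cS_0$ with inverse $p\mapsto\nu_p$; in particular $\pi(\nu_Z)=Z$.

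The key steps are then as follows. For the forward direction, suppose $e_n\to_m\nu_Z$. Since $\nu_Z$ takes values in $\cS$ where $\pi$ is continuous, the continuous mapping theorem (in the form valid when the limit concentrates on the set of continuity points of $\pi$, e.g.\ \cite[Theorem~25.7]{Bil1995} combined with a standard portmanteau argument, or \cite[Theorem~2.7]{Bil1999}) gives $\pi\circ e_n\to_m\pi(\nu_Z)$, i.e.\ $n^{-1}S_n\to_m Z$. For the converse, suppose $n^{-1}S_n\to_m Z$. The sequence $e_{n*}m$ of probability measures on the compact metrizable space $\cM_1(X)$ is tight, so every subsequence has a further subsequence along which $e_{n_k}\to_m W$ for some $\cM_1(X)$-valued random variable $W$; by Corollary~\ref{cor:w*}, $W$ takes values in $\cS$, hence $W=\nu_{\pi(W)}$. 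Applying the forward direction to this subsequence yields $n_k^{-1}S_{n_k}\to_m\pi(W)$, but by hypothesis $n_k^{-1}S_{n_k}\to_m Z$, so $\pi(W)\stackrel{d}{=}Z$, whence $W\stackrel{d}{=}\nu_{\pi(W)}\stackrel{d}{=}\nu_Z$. Since every subsequential distributional limit of $e_n$ equals the distribution of $\nu_Z$, and the ambient space is compact (so the full sequence of distributions converges iff every convergent subsequence has the same limit), we conclude $e_n\to_m\nu_Z$.

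The main obstacle is the failure of continuity of $\pi$ on all of $\cM_1(X)$, which is why one cannot apply the continuous mapping theorem naively; the resolution is precisely the a.e.\ (or in-distribution-supported) concentration on $\cS$ provided by Lemma~\ref{lem:w*}/Corollary~\ref{cor:w*}, making $\pi$ effectively continuous where it matters. A secondary technical point is checking that $\pi$ is genuinely a homeomorphism $\cS\to\cS_0$ — this needs that the $\nu_{p}$ are distinguished by their masses on the disjoint balls $B_\eps(\xi_k)$, which is immediate since $\nu_p(B_\eps(\xi_k))=p_k$ once the balls are disjoint and each contains exactly one $\xi_k$. Everything else is a routine tightness-plus-subsequence argument on a compact space.
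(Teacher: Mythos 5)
Your argument is correct and follows essentially the same route as the paper: the same projection $\pi(\omega)=(\omega(B_\eps(\xi_1)),\dots,\omega(B_\eps(\xi_d)))$, its continuity at points of $\cS$ together with the continuous mapping theorem for the forward direction, and Corollary~\ref{cor:w*} plus a subsequence-uniqueness argument on the compact space $\cM_1(X)$ for the converse. The only difference is that you spell out the tightness/compactness step that the paper summarizes as ``a standard probabilistic argument.''
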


\begin{proof}
  Define
  \[
    \pi : M_1(X)\to [0,1]^d, \qquad
    \pi(\omega_0) \coloneqq
    (\omega_0(B_{\eps}(\xi_1)),\dots, \omega_0(B_{\eps}(\xi_d)).
  \]
  Note that $\pi$ restricts to the natural identification $\nu_Z\mapsto Z$  between $\cS$ and $\cS_0$. Also, $\pi$ is continuous at elements in $\cS$ and satisfies $\pi ( e_n ) = \frac1n S_n$.

  In particular, if $e_{n_k}\to_m \nu_Z$ for some subsequence $n_k$, then it is an immediate consequence of the continuity of $\pi$ at $\nu_Z$ and the continuous mapping theorem that
  $\frac1n S_{n_k}=\pi (e_{n_k}) \to_m \pi(\nu_Z)=Z$
  completing the proof in one direction.

  The converse follows by a standard probabilistic argument. Suppose that $\frac1n S_n\to_m Z$. Let $\hZ$ be a distributional limit point for $e_n$, so $e_{n_k}\to_m \hZ$ for some subsequence $n_k$. By Remark~\ref{rem:w*}, $\hZ=\nu_A$ for some random variable $A$ with values in $\cS_0$. By what we just proved, $\frac1n S_{n_k}\to_m A$. Hence, $A=Z$, so $\hZ=\nu_Z$ is the unique distributional limit point for $e_n$. This means that $e_n\to_m \nu_Z$.
\end{proof}

\begin{proof}[Proof of Theorem~\ref{thm:dist-conv}]
  By Theorem~\ref{thm:Sera}, $n^{-1}S_n\to_m Z_{\alpha,\bar p}$.
  Hence by Lemma~\ref{lem:second}, 
  $e_n \to_m \nu_{Z_{\alpha,\bar p}}$.

  Let \(d_W\) denote the \emph{Wasserstein distance} on \(\cM_1(X)\),
  \[
    d_W ( \omega_0, \omega_0' ) \coloneqq
    \sup_{ \varphi \in \Lip_1 } \Big| \int \varphi \, d\omega_0 -
    \int \varphi \, d\omega_0' \Big|,
  \]
  where $\Lip_1 = \{ \varphi : X \to \R : \Lip \varphi+\|\varphi\|_\infty \le1 \}$  and $\Lip \varphi$ denotes the smallest Lipschitz constant of \(\varphi:X\to\R\). Recall that \(d_W\) induces the weak-$*$ topology on \(\cM_1 ( X)\).
  Also, 
  $ d_W ( e_n \circ f, e_n )\le \frac2n$, so
  the functions $e_n : X \to \cM_1(X)$ satisfy the ``asymptotic invariance'' condition
  $ d_W ( e_n \circ f, e_n )\to_m0$.
  Hence, we may apply \cite[Theorem~1]{Zwe2007} to deduce from
  $e_n\to_m \nu_{Z_{\alpha,\bar p}}$ that
  $e_n\to_d \nu_{Z_{\alpha,\bar p}}$.
\end{proof}

\subsection{Proof of Theorem~\ref{thm:ae}}

We first show that the limiting random variable \( \nu_{Z_{\alpha,\bar p}} \) in Theorem \ref{thm:dist-conv} has full support in \( \cS \) when \( \alpha \in (0,1) \).
\begin{lemma}\label{lem:sup-hat-omega}
  Let \( \alpha \in (0,1) \), $p\in\cS_0$ with $p_i>0$ for all $i$. Then
  \(
  \PP(\nu_{Z_{\alpha, p}}\in B_{ \eps }( \nu ) )>0\)
  for all  \(\eps > 0\), \(\nu \in \cS.  \)
\end{lemma}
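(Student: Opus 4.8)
The plan is to show that the distribution of $Z_{\alpha,p}$ has full support in $\cS_0$; the statement for $\nu_{Z_{\alpha,p}}$ in $\cS$ then follows immediately, since the map $q\mapsto\nu_q$ is a homeomorphism from $\cS_0$ onto $\cS$ (with the weak-$*$ topology), so it sends open sets to open sets. Thus it suffices to prove that $\PP(Z_{\alpha,p}\in U)>0$ for every nonempty open $U\subset\cS_0$.

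First I would reduce to a statement about the one-sided $\alpha$-stable random variables $\zeta_1,\dots,\zeta_d$ in~\eqref{eq:def-zeta}. Recall $Z_{\alpha,p}=(\zeta_1+\dots+\zeta_d)^{-1}(\zeta_1,\dots,\zeta_d)$, i.e.\ $Z_{\alpha,p}$ is the radial projection of the vector $(\zeta_1,\dots,\zeta_d)$ onto $\cS_0$. Since each $\zeta_k$ is a one-sided $\alpha$-stable random variable with $\alpha\in(0,1)$ and $p_k>0$, its law is a nondegenerate probability measure on $(0,\infty)$ whose support is all of $[0,\infty)$ (the density is strictly positive on $(0,\infty)$ — this is classical, e.g.\ via the series or integral representation of stable densities, and also follows from the scaling relation $\zeta_k\stackrel{d}{=}p_k^{1/\alpha}\zeta_k'$ with $\zeta_k'$ standard). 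By independence, the law of $(\zeta_1,\dots,\zeta_d)$ has support equal to $[0,\infty)^d$, hence charges every nonempty open subset of $(0,\infty)^d$ with positive probability.

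Next I would observe that the radial projection $r:(0,\infty)^d\to\cS_0$, $r(x)=(x_1+\dots+x_d)^{-1}x$, is continuous and open (it is even a submersion, being the restriction to the open cone of a smooth retraction). Therefore, given a nonempty open $U\subset\cS_0$, the preimage $r^{-1}(U)$ is a nonempty open subset of $(0,\infty)^d$, and by the previous paragraph $\PP\big((\zeta_1,\dots,\zeta_d)\in r^{-1}(U)\big)>0$, which is exactly $\PP(Z_{\alpha,p}\in U)>0$. Applying this with $U=\nu_\cdot^{-1}(B_\eps(\nu)\cap\cS)$ for $\nu\in\cS$, $\eps>0$, gives the claim.

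The only genuinely nontrivial input is the strict positivity of the one-sided $\alpha$-stable density on $(0,\infty)$; everything else is elementary topology and the independence of the $\zeta_k$. I would cite a standard reference for the stable density (e.g.\ Zolotarev, or Sato's book on infinitely divisible distributions) rather than reprove it. A minor point to handle with care is the weak-$*$ topology on $\cS$: one should note that on the finite-dimensional set $\cS$ the weak-$*$ topology coincides with the topology induced by the affine parametrisation $\cS_0\to\cS$, so that "$\nu_{Z_{\alpha,p}}\in B_\eps(\nu)$" pulls back to a genuinely open condition on $Z_{\alpha,p}\in\cS_0$; this is where the disjointness of the supports $\{\xi_1\},\dots,\{\xi_d\}$ is used.
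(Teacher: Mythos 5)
Your proposal is correct and follows essentially the same route as the paper: both arguments rest on the full support of the one-sided $\alpha$-stable laws $\zeta_k$ on $[0,\infty)$, use independence to get positive mass on open boxes in $[0,\infty)^d$, and observe that the preimage of $B_\eps(q)$ under the normalisation map contains a nonempty open set (the paper exhibits an explicit open rectangle where you invoke continuity/surjectivity of the radial projection). The only cosmetic difference is the reference for positivity of the stable density (the paper cites Nolan where you suggest Zolotarev or Sato), which is immaterial.
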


\begin{proof}
  Let \( \zeta_1,\dots,\zeta_d \) be the independent,  \( \alpha \)-stable random variables that appear in the definition \eqref{eq:def-Z} of \( Z_{\alpha, p} \).
  Recall~\cite[Lemma 1.1, Proposition 3.2]{Nol2020} that as the \( \zeta_k \) are non-negative \( \alpha \)-stable random variables with Laplace transforms given by \eqref{eq:def-zeta}, their distributions are fully supported and continuous on the half line \( [0, \infty) \).

  For every \( q \in \cS_0\), \(\eps > 0 \) the set 
  \[
    U \coloneqq \left\{ x \in [0,\infty)^d : 
      \frac{x_k}{x_1+\dots + x_d} \in (q_k -\eps, q_k + \eps ), \text{ for all } k=1,\dots, d\right\}
  \]
  contains a nonempty open rectangle \( \prod_{k=1}^d (a_k, b_k) \). By definition of \( Z_{\alpha, p} \) and the independence of the \( \zeta_k \), 
  \begin{align*}
    \PP ( Z_{\alpha, p} \in B_{\eps} (q) )
    &= \PP ( (\zeta_1, \dots, \zeta_d) \in U ) \\
    &\geq \PP ( \zeta_k \in (a_k,b_k) : k = 1,\dots, d )
      = \prod_{k=1}^d \PP ( \zeta_k \in (a_k,b_k) ) > 0.
  \end{align*}
  It follows that $\PP(\nu_{Z_{\alpha,p}}\in B_\eps(\nu_q))>0$.
\end{proof}

\begin{proof}[Proof of Theorem~\ref{thm:ae}]
  By Lemma~\ref{lem:w*}, \( \cL (x) \subset \cS \) for almost every \( x \). To prove the converse, let \( \nu \in \cS \) and consider the function
  \[
    \varphi (x) \coloneqq \liminf_{ n\to\infty } d ( e_n (x), \nu ),
  \]
  where \( d \) is any metric metrising the weak-\( * \) topology. 
  We will show that \( \varphi = 0  \) almost everywhere and so \( \nu \in \cL (x) \) for almost every \( x \).
The result then follows since \( \nu \in \cS \) is arbitrary.

  Notice that \( \varphi \) is invariant for \( f \) and so, by ergodicity, must be almost everywhere equal to some constant \( c \geq 0 \).
  Suppose for contradiction that \( c > 0 \), and let \( 0 < \eps < c \). It then follows that  \( m \{ x : e_n (x) \in B_{\eps} ( \nu ) \text{ for infinitely many $n$} \} = 0 \). 

  By Theorem \ref{thm:dist-conv}, $e_n\to_m \nu_{Z_{\alpha,\bar p}}$ with
$\bar p_i>0$ for all $i$.
  By the Portmanteau lemma together with Lemma~\ref{lem:sup-hat-omega},
  \[
    \liminf_{ n \to \infty }  m ( e_n \in B_{\eps} ( \nu ) ) 
    \geq \PP(\nu_{Z_{\alpha,\bar p}}\in B_{\eps} ( \nu ))>0.
  \]
  Hence
  \begin{align*}
    m \{ x : e_n (x) \in B_{\eps} ( \nu ) \text{ for infinitely many $n$} \} 
    &= m \Big( \bigcap_{ n = 1 } \bigcup_{ \ell \geq n } \{ e_\ell \in B_{\eps} ( \nu ) \} \Big) \\
    &= \lim_{n\to\infty} m \Big( \bigcup_{ \ell \geq n } \{ e_\ell \in B_{\eps} ( \nu ) \} \Big) \\
    &\geq \liminf_{ n \to \infty }  m ( e_n \in B_{\eps} ( \nu ) ) 
      > 0,
  \end{align*}
  where we have used the fact that the sequence of sets 
  $A_n = \bigcup_{\ell \geq n}^{ \infty} \{ e_\ell \in B_{\eps} ( \nu ) \}$ is decreasing: $A_{n+ 1} \subset A_n$.
  This contradicts our assumption that \( c > 0 \) and so \( \varphi ( x ) = 0 \) almost everywhere. 
\end{proof}

\subsection{Proof of Theorem \ref{thm:mixing}}
\label{sec:mixing}

Let $f$ satisfy~\ref{itm:H1}--\ref{itm:H4}.

We let \( L : L^1 (X, \mu ) \to L^1 (X, \mu ) \) be the \emph{transfer operator} for \( f \) defined by the relation
\[
  \int_X Lv \cdot w  \, d\mu  = \int_X  v  \cdot w\circ f
  \, d\mu, \quad \text{for all } v \in L^1 (X, \mu ), \; w \in L^{\infty}(X).
\]
Define for $n\ge0$,
\[
  T_n:L^1(Y,\mu|_Y)\to L^1(Y,\mu|_Y), \qquad
  T_nv=1_Y L^n(1_Yv).
\]
We recall some results from \cite{Gou2011,MelTer2012} which describe the asymptotic behaviour of \( T_n \) acting on the space $\cB_\theta(Y)$ in Section~\ref{sec:gm}.

\begin{theorem}
  \label{thm:mixing-infinite}
  Let \( c_{\tau}\in(0,\infty) \) be as in~\eqref{eq:tails}.
  Then, for every \( v \in \cB_{\theta}(Y) \),
  \begin{alignat*}{2}
    c_{\tau} \log  n\,  T_n v  & \to \int_Y v \, d\mu  , &
    & \alpha=1 
    \\
    c_{\tau} n^{1 - \alpha}  T_n v  & \to \tfrac{1}{\pi} \sin  \pi \alpha \int_Y v \, d \mu, & \qquad
    & \alpha\in(0,1)
  \end{alignat*}
  uniformly on $Y$ as $n\to\infty$.
\end{theorem}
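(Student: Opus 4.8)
The plan is to deduce this from the abstract renewal-type theorems for the transfer operator of infinite measure preserving systems in \cite{Gou2011,MelTer2012}, by setting up the first-return dynamics so that the hypotheses of those theorems are met. First I would recall that, since $F=f^\tau:Y\to Y$ is a topologically mixing Gibbs-Markov map (as assumed throughout Section~\ref{sec:abstract}) preserving $\mu|_Y$ with $\mu(Y)<\infty$ (Lemma~\ref{lem:gm}), its transfer operator $R$ on $\cB_\theta(Y)$ has a spectral gap: $1$ is a simple isolated eigenvalue with eigenprojection $v\mapsto \frac{1}{\mu(Y)}\int_Y v\,d\mu\cdot 1_Y$ and the rest of the spectrum lies in a disk of radius $<1$. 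Then I would introduce the twisted transfer operators $R(s)v = R(1_{\{\tau=s\}}\cdot)$... more precisely the generating-function operator $\widehat R(z)v = \sum_{s\ge1} z^s R(1_{\{\tau=s\}}v)$ for $|z|\le1$, which is the standard object linking $R$ to the sequence $T_n$ via the renewal equation $\sum_{n\ge0} z^n T_n = (I-\widehat R(z))^{-1}$ acting on $\cB_\theta(Y)$.

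Next I would verify the analytic input these theorems require. The key quantity is the behaviour of $I-\widehat R(z)$ near $z=1$, which is governed by the tail $\mu(\tau>n)$. By~\eqref{eq:tails} we have $\mu(\tau>n)\sim c_\tau n^{-\alpha}$, so the standard Karamata/Abelian estimates give $\mu(Y)-\widehat R(z)$-type expansions: for $\alpha\in(0,1)$ one gets a leading singularity of order $(1-z)^\alpha$ with the correct constant involving $\Gamma(1-\alpha)$ (equivalently $\frac{\pi\alpha}{\sin\pi\alpha}$ after using the reflection formula), and for $\alpha=1$ a leading term of order $(1-z)\log\frac1{1-z}$. The uniform (in the $\cB_\theta(Y)$-operator-norm) version of these expansions — needed to apply the operator renewal theorem rather than its scalar counterpart — follows from the spectral gap of $R$ together with the regularity assumption~\ref{itm:H3}, which ensures that $1_{\{\tau=s\}}$-type indicator weights interact well with $\cB_\theta(Y)$, and from the smooth-tail hypothesis~\ref{itm:H4} in the range $\alpha\in(0,\tfrac12]$, which supplies the extra control on $\mu(\tau=n)=O(n^{-(1+\alpha)})$ that \cite{Gou2011} requires to push the expansion far enough. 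With these verified, \cite[Theorem~1.1 (or its variants)]{Gou2011} — or the formulation in \cite{MelTer2012} — yields exactly the stated asymptotics
\[
c_\tau n^{1-\alpha} T_n v \to \tfrac{1}{\pi}\sin\pi\alpha\int_Y v\,d\mu \quad(\alpha\in(0,1)),\qquad
c_\tau\log n\, T_n v\to \int_Y v\,d\mu\quad(\alpha=1),
\]
with convergence in $\cB_\theta(Y)$-norm, hence uniformly on $Y$ since $\cB_\theta(Y)$ embeds continuously in $C(Y)$ (indeed in $L^\infty(Y)$ with the sup norm bounded by the $d_\theta$-Lipschitz norm).

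The main obstacle I expect is bookkeeping the hypotheses of \cite{Gou2011,MelTer2012} against the present setup: namely (i) confirming that our Gibbs-Markov $F$ with its transfer operator on $\cB_\theta(Y)$ is an instance of the abstract class those papers treat (spectral gap plus the ``continuity'' of $z\mapsto\widehat R(z)$), (ii) tracking that the constant $c_\tau$ and the trigonometric factor come out exactly as stated rather than up to an unidentified constant — this is a matter of carefully applying Karamata's theorem to $\sum_{s>n}\mu(\tau=s)\sim c_\tau n^{-\alpha}$ and the identity $\Gamma(\alpha)\Gamma(1-\alpha)=\pi/\sin\pi\alpha$ — and (iii) checking that for $\alpha\in(0,\tfrac12]$ hypothesis~\ref{itm:H4} is precisely the strengthening needed so that the relevant higher-order error terms in the renewal expansion are summable. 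None of these steps is deep, but assembling them correctly is where the care lies; the substantive analytic work has already been done in \cite{Gou2011,MelTer2012} and we are merely invoking it.
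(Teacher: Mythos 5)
Your proposal is correct and follows essentially the same route as the paper: the paper's proof is precisely a reduction to the operator renewal theorems of \cite{MelTer2012} (for $\alpha\in(\tfrac12,1]$) and \cite{Gou2011} (for $\alpha\in(0,\tfrac12]$, where \ref{itm:H4} supplies the stronger bound on $\|R_n\|$), with topological mixing of $F$ ruling out peripheral eigenvalues other than $1$. One small correction: the regularity of the weighted operators $v\mapsto R(1_{\{\tau=n\}}v)$ on $\cB_\theta(Y)$ comes from the Gibbs--Markov structure itself ($\tau$ is constant on elements of $\cP_Y$), not from \ref{itm:H3}, which plays no role in this theorem.
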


\begin{proof}
  By assumption, $F=f^\tau:Y\to Y$ is a Gibbs-Markov map and
  $\mu ( \tau > n ) \sim c_\tau n^{-\alpha}$.
  Also, the underlying conservative ergodic map $f:X\to X$ is topologically mixing.

  In the range $\alpha\in(\frac12,1]$ we can apply~\cite[Theorem 2.1, Proposition~11.4]{MelTer2012}.

  In the range $\alpha\in(0,\frac12]$, we use~\cite[Theorem 1.4]{Gou2011}.
  The hypotheses in~\cite{Gou2011} are stated slightly differently than in~\cite{MelTer2012}. In the notation of these papers, the essential differences are as follows: (i) There is a stronger assumption on $\|R_n\|$ which holds since
  $\mu(\tau=n)=O(n^{-(\alpha+1)})$; (ii) There is the requirement that $R(1)$ has no eigenvalues on the unit circle besides $1$ which holds since $F$ is topologically mixing (see for example~\cite[Theorem~1.6]{AarDen2001}).
\end{proof}

Define
\[
  Y_{ 0 } \coloneqq Y; \quand
  Y_r \coloneqq f^{-r}Y\setminus E_{r-1},
  \quad \text{for } r \ge 1.
\]
where $E_r\coloneqq \bigcup_{i=0}^{r}f^{-i}Y$.

Recall that $h=\frac{d\mu}{dm}$ denotes the density of the invariant measure $\mu$. 
Let $\lambda \in \cM^{\mathrm{ac}}_1 (X)$ with density $\rho = \frac{ d \lambda }{ dm }$.
Let $K(X)$ be the dense subset of $L^1(X,m)$ in~\ref{itm:H3}.

\begin{proposition} \label{prop:K}
  If \( \rho \in K(X) \),  then $L^r(\rho h^{-1}1_{Y_r}) \in\cB_\theta(Y)$ for all $r\ge0$.
\end{proposition}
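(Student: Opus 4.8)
The plan is to rewrite $L^r(\rho h^{-1}1_{Y_r})$ in terms of the function $Q_r^\rho$ from~\eqref{eq:Q} and then quote Assumption~\ref{itm:H3} directly. First I would pass from the transfer operator $L$ of $f$ with respect to $\mu$ to the transfer operator $P$ of $f$ with respect to $m$. A one-line change of measure (using $h=d\mu/dm$ together with the defining relations of $L$ and $P$) gives the standard conjugacy $Lv=h^{-1}P(hv)$, hence $L^rv=h^{-1}P^r(hv)$ for every $r\ge0$ and every $v\in L^1(X,\mu)$. Since $\int|\rho|h^{-1}1_{Y_r}\,d\mu=\int|\rho|1_{Y_r}\,dm\le\|\rho\|_{L^1(m)}<\infty$, the function $v=\rho h^{-1}1_{Y_r}$ lies in $L^1(X,\mu)$, so I may apply the conjugacy to get
\[
  L^r(\rho h^{-1}1_{Y_r})=h^{-1}P^r(\rho 1_{Y_r}).
\]

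Next I would expand $P^r$ using the elementary Markov transfer formula $P^rw=\sum_{a\in\cP_r}1_{f^ra}\cdot\bigl(\tfrac{w}{Jf^r}\circ f^r|_a^{-1}\bigr)$, which holds because $f^r$ is injective on each cylinder $a\in\cP_r$. The combinatorial heart of the matter is the identity $Y_r=\bigsqcup_{a\in\cP_r^*}a$ (up to a null set): a point $x$ lies in $Y_r$ iff $f^ix\notin Y$ for $i<r$ and $f^rx\in Y$, and — since $Y$ is a union of $\cP$-elements, so each $\cP$-element is either contained in $Y$ or disjoint from it — this holds iff the $\cP_r$-cylinder $[a_0,\dots,a_r]$ of $x$ satisfies $a_0,\dots,a_{r-1}\not\subset Y$ and $a_r\subset Y$, i.e. iff $[a_0,\dots,a_r]\in\cP_r^*$. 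Hence $1_{Y_r}$ is identically $1$ on each $a\in\cP_r^*$ and identically $0$ on each $a\in\cP_r\setminus\cP_r^*$, so only the $\cP_r^*$-terms survive in the expansion of $P^r(\rho 1_{Y_r})$; moreover $f^ra\subset a_r\subset Y$ for such $a$, so the surviving sum is supported in $Y$ and there coincides with $Q_r^\rho$. Thus $L^r(\rho h^{-1}1_{Y_r})=h^{-1}1_Y\cdot Q_r^\rho$ (extended by $0$ off $Y$).

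To finish, I would combine Lemma~\ref{lem:gm}, which gives $h^{-1}1_Y\in\cB_\theta(Y)$, with Assumption~\ref{itm:H3}, which gives $Q_r^\rho\in\cB_\theta(Y)$ for $\rho\in K(X)$. Since $\cB_\theta(Y)$ is closed under pointwise products — a $d_\theta$-Lipschitz function on $Y$ is automatically bounded because $d_\theta\le1$, and a product of bounded $d_\theta$-Lipschitz functions is again $d_\theta$-Lipschitz — the product $h^{-1}1_Y\cdot Q_r^\rho$ belongs to $\cB_\theta(Y)$, which is exactly the claim.

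I do not expect a serious obstacle here; the argument is essentially bookkeeping. The two points that need a little care are the identification $Y_r=\bigsqcup_{a\in\cP_r^*}a$ (and the consequent exact matching of $P^r(\rho 1_{Y_r})|_Y$ with $Q_r^\rho$ from~\eqref{eq:Q}), and checking that the conjugacy $L^rv=h^{-1}P^r(hv)$ is legitimately applied to a genuine $L^1(\mu)$ function.
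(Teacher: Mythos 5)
Your proposal is correct and follows essentially the same route as the paper: conjugate $L$ to the transfer operator for $m$, expand $M^r(\rho 1_{Y_r})$ over cylinders to identify it with $Q_r^\rho$ on $Y$, then invoke Assumption~\ref{itm:H3} together with $h^{-1}1_Y\in\cB_\theta(Y)$ from Lemma~\ref{lem:gm}. The only difference is that you spell out the bookkeeping (the identification $Y_r=\bigsqcup_{a\in\cP_r^*}a$ and the closure of $\cB_\theta(Y)$ under products) that the paper leaves implicit.
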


\begin{proof}
  Note first that $L^r(\rho h^{-1}1_{Y_r})$ is supported in $Y$ by definition of $Y_r$.
  Let $M:L^1(X)\to L^1(X)$ denote the transfer operator for the reference measure $m$, so $L=h^{-1}Mh$. As usual, it follows from the definition $\int M^rv\cdot w\,dm=\int v\cdot w\circ f^r\,dm$ and change of variables that
  \[
    M^rv=\sum_{a\in\cP_r}1_{f^ra} \cdot \frac{v}{J^rf}\circ f^r|_a^{-1}.
  \]
  Hence, recalling~\eqref{eq:Q},
  \begin{align*}
    L^r(\rho h^{-1}1_{Y_r}) = h^{-1}M^r(\rho 1_{Y_r})
    & = h^{-1} \sum_{a\in\cP_r}1_{f^ra} \cdot 
      1_{Y_r}\circ f^r|_a^{-1} \cdot
      \frac{\rho}{Jf^r}\circ f^r|_a^{-1}
    \\ & = h^{-1} \sum_{a\in\cP_r^*}1_{f^ra} \cdot 
         \frac{\rho}{Jf^r}\circ f^r|_a^{-1}
         =h^{-1}Q_r^\rho.
  \end{align*}
  By~\ref{itm:H3},
  $Q_r^\rho\in \cB_\theta(Y)$, while $(h|_Y)^{-1}\in\cB_\theta(Y)$ by Lemma~\ref{lem:gm}.  Hence $h^{-1}Q_r^\rho\in \cB_\theta(Y)$.
\end{proof}

For \( r \ge 0 \), define 
\(
\rho_r\coloneqq \rho h^{-1} 1_{E_r}. 
\)
Notice that
\begin{equation}\label{eq:rho-r}
  \rho_{ r } = \sum_{ j = 0 }^{ r } \rho h^{ -1 } 1_{ Y_{ j } }.
\end{equation}

\begin{corollary}
  \label{cor:mixing-infinite}
  For all \( \rho \in K(X) \) and $r\ge0$,
  \begin{alignat*}{2}
    c_{\tau} \log  n\,  L^n \rho_r  & \to \int_X \rho_r \, d\mu  , &
    & \alpha=1 
    \\
    c_{\tau} n^{1 - \alpha}  L^n \rho_r  & \to \tfrac{1}{\pi} \sin  \pi \alpha \int_X \rho _r \, d \mu, & \qquad
    & \alpha\in(0,1)
  \end{alignat*}
  uniformly on $Y$ as $n\to\infty$.
\end{corollary}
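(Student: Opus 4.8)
The plan is to deduce the corollary from Theorem~\ref{thm:mixing-infinite} by decomposing $\rho_r$ along the level sets $Y_0,\dots,Y_r$ via~\eqref{eq:rho-r} and using the semigroup identity $L^n=L^{n-j}\circ L^j$ to push each piece of mass into $Y$ after finitely many steps, where Theorem~\ref{thm:mixing-infinite} takes over.

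For $0\le j\le r$, I would set $v_j\coloneqq L^j(\rho h^{-1}1_{Y_j})$. By the definition of $Y_j$ the function $v_j$ is supported in $Y$, and by Proposition~\ref{prop:K} (applicable since $\rho\in K(X)$) we have $v_j\in\cB_\theta(Y)$. As $v_j=1_Y v_j$, for every $n\ge r$,
\[
1_Y L^n(\rho h^{-1}1_{Y_j})=1_Y L^{n-j}(1_Y v_j)=T_{n-j}v_j,
\]
directly from the definition of $T_{n-j}$. Summing over $j$ and invoking~\eqref{eq:rho-r} gives the finite-sum identity $1_Y L^n\rho_r=\sum_{j=0}^r T_{n-j}v_j$, valid for $n\ge r$.

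It then suffices to treat each summand. Applying Theorem~\ref{thm:mixing-infinite} to $v_j\in\cB_\theta(Y)$, in the case $\alpha\in(0,1)$ one gets $c_\tau(n-j)^{1-\alpha}T_{n-j}v_j\to\tfrac1\pi\sin\pi\alpha\int_Y v_j\,d\mu$ uniformly on $Y$; since $r$, and hence $j$, is fixed, $n^{1-\alpha}/(n-j)^{1-\alpha}\to1$, so the renormalisation may be replaced by $n^{1-\alpha}$, and summing over $0\le j\le r$ yields $c_\tau n^{1-\alpha}\,1_Y L^n\rho_r\to\tfrac1\pi\sin\pi\alpha\sum_{j=0}^r\int_Y v_j\,d\mu$ uniformly on $Y$. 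The case $\alpha=1$ is identical, using $\log n/\log(n-j)\to1$. Finally I would identify the constant: since $L$ preserves $\mu$-integrals and $v_j$ is supported in $Y$, $\int_Y v_j\,d\mu=\int_X L^j(\rho h^{-1}1_{Y_j})\,d\mu=\int_X\rho h^{-1}1_{Y_j}\,d\mu$, so $\sum_{j=0}^r\int_Y v_j\,d\mu=\int_X\rho_r\,d\mu$ by~\eqref{eq:rho-r}. I do not expect a real obstacle here; the only points needing a little care are the passage from the $(n-j)$-indexed renormalisation in Theorem~\ref{thm:mixing-infinite} to the $n$-indexed one (harmless because $r$ is fixed) and the bookkeeping of supports that makes $1_Y L^{n-j}(1_Y v_j)=T_{n-j}v_j$.
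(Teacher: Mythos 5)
Your proposal is correct and follows essentially the same route as the paper: decompose $\rho_r$ via~\eqref{eq:rho-r}, write $1_YL^n\rho_r=\sum_{j=0}^r T_{n-j}L^j(\rho h^{-1}1_{Y_j})$, apply Theorem~\ref{thm:mixing-infinite} to each summand (with Proposition~\ref{prop:K} supplying the required regularity), absorb the harmless $(n/(n-j))^{1-\alpha}$ or $\log n/\log(n-j)$ factors, and identify the limiting constant through $\int_Y L^j(\rho h^{-1}1_{Y_j})\,d\mu=\int_X\rho h^{-1}1_{Y_j}\,d\mu$. No gaps.
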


\begin{proof}
  We give the details in the case $\alpha=1$. The case $\alpha\in(0,1)$ is similar.

  For $n\ge r\ge0$, equation \eqref{eq:rho-r} gives 
  $$
  1_YL^n \rho_r
  =\sum_{j=0}^r 1_YL^{n}(\rho h^{-1}1_{Y_j})
  =\sum_{j=0}^r T_{n-j}L^j(\rho h^{-1}1_{Y_j}).
  $$
  Using again~\eqref{eq:rho-r} and 
the fact that \( \int_{ X }  \rho h^{ -1 } 1 _{  Y_{ j } } \,d \mu  = \int_{ Y }  L^{ j } ( \rho h^{ -1 } 1 _{  Y_{ j } } ) \,d \mu \), we obtain that on \( Y \)
  \[
    \begin{split}
      &c_\tau \log n\, L^n \rho_r-\int_X \rho_r\,d\mu \\
      &\quad
        = \sum_{j=0}^r \frac{\log n}{\log(n-j)}
        \Big\{c_\tau \log(n-j)T_{n-j}L^j(\rho h^{-1}1_{Y_j})
        -\int_Y L^j(\rho h^{-1}1_{Y_j})\,d\mu\Big\} \\
      & \quad\quad
        +\sum_{j=0}^r \Big\{\frac{\log n}{\log(n-j)}-1\Big\}\int_X \rho h^{-1}1_{Y_j}\,d\mu.
    \end{split}
  \]
  The result follows by Theorem~\ref{thm:mixing-infinite} and 
  Proposition~\ref{prop:K}.
\end{proof}

Now, let $X_1,\dots,X_d$ be the partition of $X \setminus Y$ in \ref{itm:H2}. 

\begin{lemma}\label{lem:decomposition}
  For all $\rho\in K(X)$, $n>r\ge0$, $k = 1,\dots, d$, 
  $$
  \int_X 1_{X_k} \, d f_{*}^n ( \lambda |_{E_r}) 
  = \sum_{ j = 1 }^n\int_{ \{ \tau^{(k)} \ge j  \}}  1_Y L^{ n - j } 
  \rho_r  \, d\mu.
  $$
\end{lemma}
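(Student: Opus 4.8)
The plan is to unwind the left-hand side by partitioning the orbit of a point in $\bigcup_{i=0}^r f^{-i}Y$ according to when it \emph{last} visits $Y$ before time $n$, and to use the fact that, between that last visit to $Y$ and time $n$, the orbit is trapped in a single excursion into $X_k$. First I would rewrite $f_*^n(\lambda|_{\bigcup_{i=0}^r f^{-i}Y})$ in transfer-operator language. Since $\rho_r = \rho h^{-1}1_{\bigcup_{i=0}^r f^{-i}Y}$ is the $\mu$-density of $\lambda|_{\bigcup_{i=0}^r f^{-i}Y}$, we have $\int_X 1_{X_k}\,d f_*^n(\lambda|_{\bigcup_{i=0}^r f^{-i}Y}) = \int_X 1_{X_k}\circ f^n \cdot \rho_r\,d\mu = \int_X 1_{X_k}\cdot L^n\rho_r\,d\mu$. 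So the task reduces to identifying $\int_{X_k} L^n\rho_r\,d\mu$ with the stated sum.

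Next I would decompose $L^n\rho_r$ on $X_k$ according to the last return time to $Y$. For $1\le j\le n$, the portion of mass that is in $X_k$ at time $n$ having last been in $Y$ exactly $j$ steps earlier (i.e.\ at time $n-j$, and in $X_k$ at all of times $n-j+1,\dots,n$) is $1_{X_k}\cdot L^{j-1}\big(1_{X_k\cap f^{-1}X_k\cap\dots}\,\cdot\, (1_YL^{n-j}\rho_r)\big)$; by~\ref{itm:H2}(b) an excursion that enters $X_k$ stays in $X_k$ until it next hits $Y$, so the relevant indicator collapses and, integrating against $d\mu$ and using $f$-invariance of $\mu$ (i.e.\ $\int 1_{X_k}\circ f^{j-1}\cdot w\,d\mu = \int 1_{X_k}\cdot L^{j-1}w\,d\mu$ read backwards), the time-$n$ contribution becomes $\int_{\{y\in Y:\,f y,\dots,f^{j-1}y\in X_k\}} 1_YL^{n-j}\rho_r\,d\mu$. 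The condition $f y,\dots,f^{j-1}y\in X_k$ says precisely that the excursion from $Y$ starting at $y$ spends at least $j-1$ of its steps (actually its first $j-1$ steps) in $X_k$ before returning; by the definition $\tau^{(k)}(y)=\Card\{n\le\tau(y):f^ny\in X_k\}$ and~\ref{itm:H2}(b), this set is exactly $\{\tau^{(k)}\ge j-1\}$. Re-indexing $j\mapsto j$ carefully (matching the shift by one between "excursion step count" and "return time", which is why~\ref{itm:H2}(b) reads $\{\tau=n\}=\bigcup_k\{\tau^{(k)}=n-1\}$) yields $\int_{\{\tau^{(k)}\ge j\}} 1_YL^{n-j}\rho_r\,d\mu$, and summing over $j=1,\dots,n$ gives the claim. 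The hypothesis $n>r$ guarantees that $\rho_r$, being supported in $\bigcup_{i=0}^r f^{-i}Y$, has been pushed far enough that every point of its support has made at least one return to $Y$ by time $n$, so there is no leftover term from $j>n$ and the sum is exact rather than an inequality.

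The main obstacle I anticipate is purely bookkeeping: getting the index shift exactly right (the "$-1$" in~\ref{itm:H2}(b) versus the "$\ge j$" in the statement) and justifying the collapse of the nested indicators $1_{X_k}\cap f^{-1}(X_k)\cap\dots$ to a single condition on $y\in Y$ via~\ref{itm:H2}(b) and the Markov property of $f$. Once one has set up the "last return to $Y$" decomposition $1_{X_k}\cdot L^n\rho_r = \sum_{j=1}^n 1_{X_k}\cdot L^{j-1}\big(1_{X_k}\cdot L(1_{X_k}\cdots)(1_YL^{n-j}\rho_r)\big)$ as a partition of unity over disjoint events (last visit to $Y$ at time $n-j$, for $j=1,\dots,n$, which is exhaustive because $n>r$), the rest is a routine application of duality for $L$ together with the invariance of $\mu$, and the identification of the constrained domain of integration over $Y$ with $\{\tau^{(k)}\ge j\}$.
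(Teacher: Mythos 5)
Your plan is correct and is essentially the paper's proof: both decompose the event $\{f^n x\in X_k\}$ over $x$ in $\bigcup_{i=0}^r f^{-i}Y$ according to the last visit to $Y$ before time $n$ (exhaustive since $n>r$), use \ref{itm:H2}(b) to see the orbit sits in $X_k$ throughout the final stretch, and convert each term by duality for $L$ into an integral of $1_YL^{n-j}\rho_r$ over a sublevel set of $\tau^{(k)}$. The only blemish is the intermediate bookkeeping you yourself flag: being in $Y$ at time $n-j$ and in $X_k$ at times $n-j+1,\dots,n$ means $fy,\dots,f^{j}y\in X_k$ (not $f^{j-1}y$), which is exactly $\{\tau^{(k)}\ge j\}$ with no reindexing needed, matching the stated formula.
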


\begin{proof}
  Suppose that $x\in E_r$ and that $f^n x \in X_k$ for some \( k = 1 ,\dots, d \). Then $x$ must have made its last return to $Y$ at some time $n - j$ for some $1 \leq j \leq n$. Moreover, by \ref{itm:H2}(b), $f^mx\in X_k$ for $n-j+1\le m\le n$.
  Hence,
  \begin{align*}
    \{ x \in E_r: f^n x \in X_k \} &= 
                                     \bigcup_{ j = 1 }^n \{x\in E_r: f^{n - j }x \in Y \text{ and } f^m x\in X_k,\, n-j+1\le m\le n \} \\
                                   &= \bigcup_{ j = 1 }^n \{x\in E_r: f^{n - j }x \in Y \text{ and } \tau^{(k)}(f^{ n - j}x) \ge  j   \}.
  \end{align*}
  As this is a disjoint union, 
  \begin{align*}
    \int_X 1_{ X_k }\, d f_*^n (\lambda|_{E_r})
    &= \int_X 1_{ X_k }  \circ f^n \cdot 1_{E_r}\cdot \rho \cdot h ^{-1} \, d\mu \\
    &=  \sum_{ j = 1 }^n \int_X (1_{ \{ \tau^{(k)} \ge j  \}}1_Y) \circ f^{ n - j } \cdot 1_{E_r} \cdot \rho \cdot h ^{-1} \, d\mu \\
    &= \sum_{ j = 1 }^n \int_{ \{ \tau^{(k)} > j - 1 \} } 1_Y L^{ n - j }  \rho_r  \, d\mu.
  \end{align*}
\end{proof}

\begin{proposition}\label{prop:mixing-main}
  For all $\rho\in K(X)$, $r\ge0$, $k=1,\dots, d$, 
  \[
    \lim_{ n \to \infty} \int_X 1_{ X_k } df_{*}^n (\lambda|_{E_r}) = \bar p_k\lambda\left(\bigcup_{i=0}^r f^{-i}Y\right).
  \]
\end{proposition}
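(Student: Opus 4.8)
The plan is to combine Lemma~\ref{lem:decomposition} with the uniform asymptotics for $L^m\rho_r$ on $Y$ supplied by Corollary~\ref{cor:mixing-infinite}, reducing the proposition to the asymptotic evaluation of a convolution-type sum. Write $E_r\coloneqq\bigcup_{i=0}^r f^{-i}Y$, so $\rho_r=\rho h^{-1}1_{E_r}$ and $\int_X\rho_r\,d\mu=\int_{E_r}\rho\,dm=\lambda(E_r)$. Put $\phi(m)\coloneqq c_\tau\log m$, $\kappa_\alpha\coloneqq1$ and $C\coloneqq\lambda(E_r)$ when $\alpha=1$; put $\phi(m)\coloneqq c_\tau m^{1-\alpha}$, $\kappa_\alpha\coloneqq\pi/\sin(\pi\alpha)$ and $C\coloneqq\tfrac1\pi\sin(\pi\alpha)\,\lambda(E_r)$ when $\alpha\in(0,1)$, so that $C\kappa_\alpha=\lambda(E_r)$ in both cases. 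By Proposition~\ref{prop:K}, $1_YL^m\rho_r\in\cB_\theta(Y)$ for $m\ge r$, and Corollary~\ref{cor:mixing-infinite} then gives, for $m>r$,
\[
  1_YL^m\rho_r=\frac{C+\eta_m}{\phi(m)}\ \text{ on }Y,\qquad \|\eta_m\|_{L^\infty(Y)}\to0\ \text{ as }m\to\infty.
\]
Lemma~\ref{lem:decomposition}, after substituting $m=n-j$, reads
\[
  \int_X 1_{X_k}\,df_*^n(\lambda|_{E_r})=\sum_{m=0}^{n-1}\int_{\{\tau^{(k)}\ge n-m\}}1_YL^m\rho_r\,d\mu .
\]

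Fix an integer $N>\max\{r,1\}$ and split at $m=N$. For the finitely many terms with $0\le m<N$: here $\{\tau^{(k)}\ge n-m\}\subset Y$ and, by conservativity of $f$, $\tau^{(k)}\le\tau<\infty$ $\mu$-a.e., so $\mu(\tau^{(k)}\ge n-m)\to0$ as $n\to\infty$; since $1_YL^m\rho_r\in L^1(\mu)$ (because $\int_X L^m\rho_r\,d\mu=\int_X\rho_r\,d\mu=\lambda(E_r)<\infty$), absolute continuity of the integral forces $\int_{\{\tau^{(k)}\ge n-m\}}1_YL^m\rho_r\,d\mu\to0$, hence $\sum_{m=0}^{N-1}(\cdots)\to0$. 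For $m\ge N>r$, inserting the expansion of $1_YL^m\rho_r$ yields
\[
  \sum_{m=N}^{n-1}\int_{\{\tau^{(k)}\ge n-m\}}1_YL^m\rho_r\,d\mu=C\,\Sigma_n+E_n,\qquad
  \Sigma_n\coloneqq\sum_{m=N}^{n-1}\frac{\mu(\tau^{(k)}\ge n-m)}{\phi(m)},
\]
with $|E_n|\le\sum_{m=N}^{n-1}\|\eta_m\|_{L^\infty(Y)}\,\mu(\tau^{(k)}\ge n-m)/\phi(m)$.

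It remains to evaluate $\Sigma_n$. I claim $\lim_{n\to\infty}\Sigma_n=\tfrac{c_k}{c_\tau}\kappa_\alpha$ for every fixed $N$, which then gives $C\Sigma_n\to\tfrac{c_k}{c_\tau}(C\kappa_\alpha)=\tfrac{c_k}{c_\tau}\lambda(E_r)=\bar p_k\lambda(E_r)$ since $\bar p_k=c_k/c_\tau$. Substitute $\ell=n-m$ and use $\mu(\tau^{(k)}\ge\ell)\sim c_k\ell^{-\alpha}$ from~\ref{itm:H2}(c). When $\alpha\in(0,1)$, $c_\tau\Sigma_n=\sum_{\ell=1}^{n-N}\mu(\tau^{(k)}\ge\ell)/(n-\ell)^{1-\alpha}$ is a Riemann sum converging to $c_k\int_0^1 t^{-\alpha}(1-t)^{-(1-\alpha)}\,dt=c_k\,\Gamma(\alpha)\Gamma(1-\alpha)=c_k\pi/\sin(\pi\alpha)$ (the integrable singularities at $t=0,1$, and the replacement of $\mu(\tau^{(k)}\ge\ell)$ by $c_k\ell^{-\alpha}$ up to $1+o(1)$, being handled by splitting off small neighbourhoods of the endpoints and a large index). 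When $\alpha=1$, $c_\tau\Sigma_n=\sum_{\ell=1}^{n-N}\mu(\tau^{(k)}\ge\ell)/\log(n-\ell)$; splitting at $\ell=\lfloor n/2\rfloor$, the part $\ell\le n/2$ (where $\log(n-\ell)=\log n+O(1)$) is $\sim c_k\bigl(\sum_{\ell\le n/2}\ell^{-1}\bigr)/\log n\to c_k$, while the part $n/2<\ell\le n-N$ is $O\bigl(n^{-1}\sum_{m=2}^{\lfloor n/2\rfloor}(\log m)^{-1}\bigr)=O(1/\log n)\to0$. In either case $\Sigma_n$ converges, so $\sup_n\Sigma_n<\infty$; since $\phi$ is increasing with $\phi(N)>0$, a two-scale estimate on $E_n$ (terms $N\le m\le M$, which tend to $0$ as $n\to\infty$ because $\mu(\tau^{(k)}\ge n-m)\to0$, versus terms $m>M$, bounded by $\bigl(\sup_{\ell>M}\|\eta_\ell\|_{L^\infty(Y)}\bigr)\Sigma_n$, and then $M\to\infty$) shows $E_n\to0$. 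Combining the three contributions, $\int_X 1_{X_k}\,df_*^n(\lambda|_{E_r})\to\bar p_k\lambda\bigl(\bigcup_{i=0}^r f^{-i}Y\bigr)$.

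The main obstacle is the convolution asymptotics $\Sigma_n\to\tfrac{c_k}{c_\tau}\kappa_\alpha$. For $\alpha\in(0,1)$ this is a balanced regular-variation convolution (the exponents $-\alpha$ and $-(1-\alpha)$ sum to $-1$) and is routine. The boundary case $\alpha=1$ is the genuinely delicate one: $\phi(m)=c_\tau\log m$ is only slowly varying (and vanishes at $m=1$, which is why we keep $m\ge N$ away from the edge), so no clean convolution lemma applies and one must split the sum by scale as above, checking that the main contribution comes from a wide window of indices $\ell$.
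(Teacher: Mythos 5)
Your proposal is correct and follows essentially the same route as the paper: Lemma~\ref{lem:decomposition} plus the uniform asymptotics of Corollary~\ref{cor:mixing-infinite}, followed by the convolution-sum estimates (which the paper delegates to the appendix Lemmas~\ref{prop:series-1}--\ref{lem:series-2-alpha-1} and you re-derive inline). The only differences are organisational — you split off the first $N$ terms rather than just the boundary term and control the error sum by a two-scale argument instead of citing Lemma~\ref{prop:series-2} — and your limit $c_\tau\Sigma_n\to c_k\,\pi/\sin(\pi\alpha)$ is the constant consistent with~\ref{itm:H2}(c) and with the final conclusion.
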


\begin{proof}
  Note that $\int\rho_r\,d\mu=\int_{E_r}\rho h^{-1}\,d\mu=\lambda(E_r)$.

  First, we consider the case $\alpha \in (0,1)$. Set
  \[
    \eps_{ n } \coloneqq
    \sup_{ Y }\Big| L^n \rho_r   - \frac{ d_{\alpha} \lambda (E_r) }{ n^{ 1 - \alpha }}\Big|,
  \]
  where \( d_{\alpha} = \frac{ \sin  \pi \alpha } {\pi c_{\tau }} \).  By Corollary~\ref{cor:mixing-infinite},
  \(
    \eps_n = o ( n^{  \alpha - 1} ),
  \)
  Hence, by Lemma~\ref{lem:decomposition},
  \begin{align}
    \nonumber
    \int 1_{X_k} \, d f_{*}^n ( \lambda |_{E_r} ) 
    &= \sum_{ j = 1 }^n\int_{ \{ \tau^{(k)} \ge j  \}} 1_Y L^{ n-j  } \rho_r  \, d\mu 
    \\ 
    \nonumber & = \sum_{ j = 1 }^{ n-1 }\int_{ \{ \tau^{(k)} \ge j  \}} 1_Y L^{ n-j } \rho_r  \, d\mu +O(\mu(\tau^{(k)}\ge n)) \\
    &= d_{\alpha} \lambda ( E_r ) 
      \sum_{ j = 1 }^{ n - 1 } \frac{ \mu ( \tau^{(k)} \ge j  ) }{ ( n - j )^{1 - \alpha} } 
      + O \Big( \sum_{ j = 1 }^{n-1} \frac{ \eps_{ n - j} \mu ( \tau^{(k)} \ge j  ) }{ ( n - j )^{1 - \alpha}} \Big)
      +O(n^{-\alpha}).
      \label{eq:push-with-Delta}
  \end{align}
  As \( \mu ( \tau^{(k)} \ge j ) \sim c_k j^{ -\alpha } \) we can conclude from Lemma \ref{prop:series-1} and Lemma \ref{prop:series-2} that
  \begin{equation}
    \label{eq:series-bigger-1}
    \sum_{ j = 1}^{n-1} \frac{ \mu ( \tau^{(k)} \ge  j   ) }{ ( n - j )^{1 - \alpha } } \to c_k \frac{ \pi } { \sin  \pi \alpha } 
    \quand
    \sum_{ j = 1 }^{n-1} \frac{ \eps_{ n - j} \mu ( \tau^{(k)} \ge j  ) }{ ( n - j )^{1 - \alpha}} \to 0. 
  \end{equation}
  Combining \eqref{eq:push-with-Delta} and \eqref{eq:series-bigger-1} we obtain
  \[
    \int_{X_k} \, d f_*^n (\lambda|_{E_r}) 
    = \frac{ c_k }{  c_\tau } \lambda ( E_r ) + o ( 1 )
    = \bar p_k \lambda ( E_r ) + o ( 1 ),
  \]
  concluding the result in the case that \( \alpha \in (0,1) \).

  When $\alpha = 1$, we proceed in the same manner as before. Set
  \[
    \eps_{ n } \coloneqq
    \sup_Y \Big|L^n\rho_r - \frac{ \lambda ( E_r ) }{ c_{\tau} \log n }\Big|
  \]
  and note by Corollary \ref{cor:mixing-infinite} that \( \eps_{ n } = o ( 1 / \log n) \)
  Hence, by Lemma \ref{lem:decomposition},
  \begin{align}
    \nonumber
    \int 1_{X_k} \, d f_{*}^n ( \lambda |_{E_r} ) 
    &= \sum_{ j = 1 }^n\int_{ \{ \tau^{(k)} \ge j \}} 1_YL^{ n-j  } \rho_r \, d\mu 
    \\ 
    \nonumber & = \sum_{ j = 1 }^{ n-2 }\int_{ \{ \tau^{(k)} \ge j \}} 1_YL^{ n-j  } \rho_r \, d\mu + O(\mu ( \tau^{(k)} \ge n - 1 ))
    \\
    &= \frac{ \lambda ( E_r ) }{ c_{\tau } }
      \sum_{ j = 1 }^{ n - 2 } \frac{ \mu ( \tau^{(k)} \ge j ) }{ \log ( n - j ) } + O \Big( \sum_{ j = 1 }^{n-2} \frac{ \eps_{ n - j} \mu ( \tau^{(k)} \ge j ) }{ \log ( n - j )} \Big) + O(n^{-1}).
      \label{eq:push-with-Delta-1}
  \end{align}
  Using Lemma \ref{lem:series-1-alpha-1} and Lemma \ref{lem:series-2-alpha-1} we conclude that
  \begin{equation}
    \label{eq:series-1}
    \sum_{ j = 1}^{n-2} \frac{ \mu ( \tau^{(k)} \ge j  ) }{ \log  ( n - j ) } \to c_k 
    \quand
    \sum_{ j = 1 }^{n-2} \frac{ \eps_{ n - j} \mu ( \tau^{(k)} \ge j ) }{ \log ( n - j ) } \to 0. 
  \end{equation}
  Combining \eqref{eq:push-with-Delta-1} and \eqref{eq:series-1} we obtain
  \(
  \int_{X_k} \, d f_*^n (\lambda|_{E_r}) = \bar p_k \lambda ( E_r ) + o ( 1 ),
  \)
  concluding the proof.
\end{proof}

\begin{proof}[Proof of Theorem \ref{thm:mixing}]
  Since $K(X)$ is dense in $L^1(X, m)$, 
  the conclusion of Proposition~\ref{prop:mixing-main} holds for
  general $\lambda\in \cM_1^{\mathrm ac}(X)$.

Fix $k\in\{1,\dots,d\}$. Then
    \[
\Big|\int_X 1_{ X_k } df_{*}^n (\lambda|_{E_r})-
    \int_X 1_{ X_k } df_{*}^n \lambda\Big|
    =\Big|\int_X 1_{ X_k }\circ f^n\, (1_X-1_{E_r})\,d\lambda\Big|
\le \int_X |1_X-1_{E_r}|\,d\lambda
\]
for all $r\ge1$,
so by the dominated convergence theorem,
$\lim_{r\to\infty}\int_X 1_{ X_k } df_{*}^n (\lambda|_{E_r})=
    \int_X 1_{ X_k } df_{*}^n \lambda$ uniformly in $n$.
Also, $\lim_{r\to\infty}\lambda(E_r)=1$ by the dominated convergence theorem. 
Hence, for any $\eps>0$, there exists $r\ge1$ such that
$\big|\int_X 1_{ X_k } df_{*}^n (\lambda|_{E_r})-
    \int_X 1_{ X_k } df_{*}^n \lambda\big|<\eps$ for all $n\ge1$ and
$\bar p_k|\lambda(E_r)-1|<\eps$.
By the first statement of the proof, we obtain that
$\limsup_{n\to\infty}\big|\int_X 1_{X_k}\circ f^n\,d\lambda-\bar p_k\big|<2\eps$.
Since $\eps>0$ is arbitrary, it follows that
$\lim_{n\to\infty}\int_X 1_{X_k}\circ f^n\,d\lambda=\bar p_k$.
In other words, $\lim_{n\to\infty}f_*^n \lambda(X_k)=\bar p_k$.

  Let $\omega_0=\lim_{i\to\infty} f_*^{n_i}\lambda$ be a subsequential limit of $f_*^n\lambda$.
Then $\omega_0(X_k)=\bar p_k$ for each $k$.
Also, $\omega_0=\nu_p$ for some $p\in\cS_0$ by Lemma~\ref{lem:exact}.
Hence $\omega_0=\nu_{\bar p}$.

We have shown that $\nu_{\bar p}$ is the unique subsequential limit of
$f_*^n\lambda$. By compactness of $\cM_1(X)$, it follows that
$\lim_{n\to\infty}f_*^n\lambda=\nu_{\bar p}$.
\end{proof}

\section{Examples}
\label{sec:examples}

In this section, we apply our main results to intermittent maps.
Throughout, we write $|E|=\Leb(E)$ for measurable subsets $E\subset[0,1]$.

\subsection{Thaler maps}
 \label{sec:thaler}

We define \( \cT \) to be the class of interval maps  \( f : [0,1] \to [0,1] \)  studied in \cite{Tha1980,Tha1983} which satisfy the following:
\begin{description}
  \item[{T1\label{itm:T1}}]
  The exists $0=\xi_1<\xi_2<\dots<\xi_d=1$, $d\ge2$, with $f\xi_k=\xi_k$ and
  $f'(\xi_k)=1$ for $k=1,\dots,d$;
  \item[{T2\label{itm:T2}}]
  There exist subintervals \( I_{ 1 }, \ldots, I_d \), $d\ge2$, with $\xi_k\in \Int I_k$, such that \( \bigcup_{ k = 1  }^d I_k = [0, 1]  \) and such that the restriction \( f|_{  I_k } \) extends to a \( C^{ 2 } \) diffeomorphism \( f_k : \bar{I}_k \to [0,1] \);
  \item[{T3\label{itm:T3}}] \( f'(x) > 1 \) for all \( x \not\in \{ \xi_{ 1 }, \ldots, \xi_d\} \) and there exists an \( \eps > 0 \) so that \( f \) is concave (resp.\ convex) on the interval \( (\xi_k - \eps, \xi_k) \cap I_k \) (resp.\ \( (\xi_k, \xi_k + \eps) \cap I_k \));

  \item[{T4\label{itm:T4}}]
  There exist \( \alpha \in ( 0,1 ] \) and \( b_{ 1 }, \ldots, b_d > 0 \) such that for every \( k = 1, \ldots , d \),
  \[
    | f x - x | \sim  b_k | x - \xi_k |^{ 1 + 1/\alpha } \quad \text{as } x \to \xi_k.
  \]
\end{description}

\begin{remark}
  When \( d = 2 \), conditions~\ref{itm:T1}--\ref{itm:T4} reduce to conditions~\ref{itm:tha1}--\ref{itm:tha4} given in the Introduction.
\end{remark}

Thaler maps $f\in\cT$ can be shown to lie in the abstract setting of Section~\ref{sec:abstract} and hence Theorems~\ref{thm:ae},~\ref{thm:dist-conv} and~\ref{thm:mixing} apply to these examples.
The verification of the hypotheses in Section~\ref{sec:abstract}
is essentially contained in~\cite[Proof of Theorem 4.6]{Ser2020} and the references therein. For completeness we recall the main steps of this argument here. Different inducing schemes are needed in the cases \( d = 2 \) and \( d \geq 3 \), and the case \( d = 2 \) will be treated in the more general setting considered in Section~\ref{sec:clm}, so we focus here on the case \( d \geq 3 \).\footnote{There are some typos in~\cite[Equation~(4.7)]{Ser2020} where $x_i$ should be $x_j$ and $f_j$ should be $f_i$.}

By \cite{Tha1980,Tha1983}, \( f \in \cT \) is conservative and exact with a unique (up to scaling) invariant absolutely continuous measure \( \mu \); moreover $\mu([0,1])=\infty$.
Since the branches $f_k:\overline{I}_k\to[0,1]$ are onto, it is immediate (see Section~\ref{sec:gm}) that $f$ is a topologically mixing Markov map. 

Next, we describe the inducing scheme.
Set
\[
  X_k \coloneqq I_k \cap f^{ -1 } I_k,\quad
  k = 1 ,\ldots, d;
  \quand
  Y \coloneqq [0,1]\setminus (X_1\cup\dots\cup X_k).
\]

Let
$Y_k \coloneqq I_k \setminus X_k$. Then $Y=Y_1\dot\cup\cdots \dot\cup Y_d$.
Let \( g_k \coloneqq f_k^{ -1 } : [0,1] \to \overline{I}_k \) denote the inverse branch of \( f \) on \( \overline{I}_k \).
Set
\( X_{ k, n } \coloneqq g_k^n Y_k \) for \( n \ge 1 \), $k=1,\dots, d$. Notice that \( \{X_{ k,n },\;n\ge 1\} \) is a partition of \( X_k \) and that the maps \( f : X_{  k , n + 1 } \to X_{ k, n }  \), \( f:X_{ k, 1 } \to Y_k \) are bijections. 
For \( j \neq k \), set 
\[
  Y_{j,k,1} \coloneqq g_jY_k \quand  Y_{ j, k ,  n } \coloneqq g_j X_{ k, n - 1 },\; n\ge2.
\]
Then $\{Y_{j,k,n}:j\neq k,\,n\ge 1\}$ is a partition of $Y_j$ for each $j$ and hence
\[
  \cP_Y \coloneqq \{ Y_{j,k,n}: j\neq k,\,n\ge 1\}
\]
is a partition of $Y$.

Let $F=f^\tau:Y\to Y$ be the first return map to $Y$.
Then $\tau=n$ on $\bigcup_{j\neq k}Y_{j,k,n}$ and
$F=f^n: Y_{ j , k,  n } \to Y_k $ is a bijection for all $j\neq k$ and all $n$.

\begin{proposition}\label{prop:thaF}
  The first return map \( F : Y \to Y \)
  is a topologically mixing Gibbs-Markov map with respect to the partition $\cP_Y$. 
In addition,~\ref{itm:H1},~\ref{itm:H2}(a,b) and~\ref{itm:H3} hold.
\end{proposition}

\begin{proof}
  By construction, \( F \) is Markov with respect to the partition $\cP_Y$. As \( F Y_{ j,k,n } = Y_k \) it is clear that \( F \) has finite images. Moreover, as \( F Y_k = \bigcup_{ \ell \neq k } Y_{ \ell } \) we see that \( F^{ 3 }  Y_{ j,k,n }  = Y \) and so \( F \) is topologically mixing.
  
  For \( 0 \leq m \leq n - 1 \) and \( x ,y \in [0,1]  \),
  \begin{equation}
    \label{eq:dist-calc-1}
    \log \frac{  (f^{ n - m })' (f^m x)  }{ (f^{ n - m})' (f^m y) } 
    = \sum_{ i = m }^{n-1} \log \frac{ f' ( f^i x) }{ f' ( f^i y) } 
    = \sum_{ i = m}^{n-1} \frac{ |f''( z_i)|  }{  f'( z _i)  } | f^i x -
    f^i y |
    \le |f''|_\infty \sum_{ i = m}^{n-1}  | f^i x - f^i y |,
  \end{equation}
  where \( z_i \in [ f^i x , f^i y ] \) is chosen by the mean value theorem. 
  In particular, for \( x ,y \in Y_{j, k, n} \), 
  \[
    \log \frac{ (f^{ n - m })' (f^m x)  }{ (f^{ n - m})' (f^m y) } 
    \le |f''|_\infty\left( | Y_{ j,k,n } | + \sum_{ i = 1 }^{ \infty } | X_{k,i} | \right) \le |f''|_\infty.
  \]
  Hence for  $i = 0,\ldots, n - 1$,
  \begin{equation}
    \label{eq:dist-calc-2}
    \frac{ | f^i x - f^i  y | }{ | X_{ k , n - i } | }
    \le e^{2|f''|_\infty}\frac{ | f^{n-i} f^ix - f^{n-i}  f^iy | }{ | f^{n-i}X_{k,n-i} | }
    = e^{2|f''|_\infty}\frac{ | F x - F y | }{ | Y_k | },
  \end{equation}
  Inserting~\eqref{eq:dist-calc-2} into~\eqref{eq:dist-calc-1} with \( m = 0 \), we obtain that
  \begin{equation}\label{eq:dist-calc-3}
    \log \Big| \frac{ F' x  }{  F'  y  } \Big|  \leq C | F x - F y |
  \end{equation}
  where $C=|f''|_\infty e^{2|f''|_\infty}\max_k|Y_k|^{-1}$.
  
  Let $\lambda=\inf_Y f'>1$ and set $\theta=\lambda^{-1}$.
  If \( s(x,y) = n  \), then \( 1 \geq | F^n x - F^n y|  \geq    \lambda ^{n - 1} | Fx - Fy | \). Combining this inequality with \eqref{eq:dist-calc-3}, we obtain
  \(
  \log \big| \frac{ F' x  }{  F'  y  } \big|  \leq  C\theta^{n-1} =C\theta^{-1}d_\theta(x,y),
  \)
  which concludes the proof that $F$ is Gibbs-Markov.

We denote the density of $\mu$ by $h$.  
 By~\cite[Lemma~4]{Tha1983}, $h$ is bounded on compact subsets of $[0,1]\setminus\{\xi_1,\dots,\xi_d\}$. Hence, \ref{itm:H1} is satisfied.
Also, it is immediate from the definitions that~\ref{itm:H2}(a,b) hold.

Finally, we  verify~\ref{itm:H3} taking $K(X)=C^1(X)$.
For this, we use very rough estimates (sharp estimates not depending on $r$ are available but not needed). 
Let $\rho \in C^1(X)$ and note that 
\[
Q_r^\rho(y)=\sum \frac{\rho(x)}{(f^r)'(x)}
\quad\text{for $y\in Y$},
\]
where the sum is over those $x\in X$ with $f^rx=y$ such that
$x,fx,\dots,f^{r-1}x\not\in Y$. There are at most $d$ such preimages $x$, one in each $X_k$, so $|Q_r^\rho|_\infty \le d |\rho|_\infty$.
Next, given $y,y'\in Y$ we denote the corresponding preimages $x,x'\in X_k$.
Then
\begin{equation}\label{eq:checking-h3}
|Q_r^\rho(y)-Q_r^\rho(y')|\le\sum |\rho(x)-\rho(x')| + \sum|\rho|_\infty\Big|\frac{1}{(f^r)'(x)}-\frac{1}{(f^r)'(x')}\Big|.
\end{equation}
Certainly,
\begin{align*}
& |(f^r)'(x)-(f^r)'(x')|  =\bigg|\prod_{j=0}^{r-1}f'(f^jx)-
\prod_{j=0}^{r-1}f'(f^jx')\bigg|
 \le |f'|_\infty^{r-1}\sum_{j=0}^{r-1}|f'(f^jx)-f'(f^jx')|
\\ & \quad\; \le |f'|_\infty^{r-1}|f''|_\infty\sum_{j=0}^{r-1}|f^jx-f^jx'|
\le r|f'|_\infty^{r-1}|f''|_\infty|f^rx-f^rx'|
\le r|f'|_\infty^{r-1}|f''|_\infty|y-y'|.
\end{align*}
It follows that
\[
\Big|\frac{1}{(f^r)'(x)}-\frac{1}{(f^r)'(x')}\Big|\le 
|(f^r)'(x)-(f^r)'(x')|
\le r|f'|_\infty^{r-1}|f''|_\infty|y-y'|.
\]
Also, $|\rho(x)-\rho(x')|\le |\rho'|_\infty |x-x'|
\le |\rho'|_\infty |y-y'|$.
Hence, 
\[
|Q_r^\rho(y)-Q_r^\rho(y')|\le d\big(|\rho'|_\infty+r|\rho|_\infty|f'|_\infty^{r-1}|f''|_\infty\big)|y-y'|.
\]

As shown above, 
$|y-y'|\lesssim d_\theta(y,y')$, so
$|Q_r^\rho(y)-Q_r^\rho(y')|\lesssim d_\theta(y,y')$,
yielding $Q_r^\rho\in \cB_\theta(Y)$ as required.
\end{proof}

Let $e_k=1$ for $k=1,d$ and $e_k=2$ for $2\le k\le d-1$. 
Define 
\[
  c_k \coloneqq
  e_k b_k^{ -\alpha } \alpha^\alpha
  \sum_{j = 1, \ldots, d,\; j \neq k}
  h ( g_j \xi_k ) g_j' (  \xi_k )  .
\]
In particular, for $d=2$, we have
\begin{equation}\label{eq:tha-2}
  c_1 = b_1^{ -\alpha } \alpha^\alpha h ( g_2 0 ) g_2' ( 0)  , \quad
  c_2 = b_2^{ -\alpha } \alpha^\alpha h ( g_1 1 ) g_1' ( 1)  .
\end{equation}

The remaining ingredients, namely~\ref{itm:H2}(c)
 and~\ref{itm:H4}, follow from the next result.

\begin{lemma}
  \(
  \mu ( \tau^{ ( k ) } = n )
  \sim
  \alpha c_k
  n^{-(1+\alpha)}
  \)
  as $n\to\infty$ for $k=1,\dots,d$.
\end{lemma}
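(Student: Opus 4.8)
The plan is to compute the measure of the level set $\{\tau^{(k)}=n\}$ by decomposing it into the pieces $Y_{j,k,n}$ coming from the inducing scheme and then estimating $\mu(Y_{j,k,n})=\int_{Y_{j,k,n}}h\,d\Leb$ using the explicit geometry of the inverse branches $g_j$ together with the asymptotics of the fixed-point iteration near $\xi_k$. First I would recall from the construction of the inducing scheme that, by assumption~\ref{itm:H2}(b) (here in the form established just above), $\{\tau^{(k)}=n\}=\bigcup_{j\neq k}Y_{j,k,n}$ up to the index shift, with $Y_{j,k,1}=g_jY_k$ and $Y_{j,k,n}=g_jX_{k,n-1}$ for $n\ge 2$, and $X_{k,m}=g_k^mY_k$. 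Since each $g_j$ is a $C^2$ diffeomorphism with $g_j'$ bounded and continuous near $\xi_k$, we have $\mu(Y_{j,k,n})=\int_{X_{k,n-1}}(h\circ g_j)\,|g_j'|\,d\Leb\sim h(g_j\xi_k)\,g_j'(\xi_k)\,|X_{k,n-1}|$ as $n\to\infty$, using that $X_{k,n-1}$ shrinks to the point $g_j\xi_k$... wait — more precisely $X_{k,m}\to\{\xi_k\}$ as $m\to\infty$ since $\xi_k$ is the neutral fixed point, and $h$ is continuous (indeed bounded away from $0$ and $\infty$) on compact subsets of $[0,1]\setminus\{\xi_1,\dots,\xi_d\}$, so $h\circ g_j$ and $g_j'$ are asymptotically constant on $X_{k,m}$ near $\xi_k$. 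Summing over $j\neq k$ gives
\[
\mu(\tau^{(k)}=n)\sim\Big(\sum_{j\neq k}h(g_j\xi_k)\,g_j'(\xi_k)\Big)\,|X_{k,n-1}|,
\]
with the convention that for $k=1$ and $k=d$ the point $\xi_k$ is an endpoint of $[0,1]$, so only the branches $g_j$ landing on the interior side contribute, which is precisely the role of the factor $e_k$ (equal to $1$ at the two endpoint fixed points and $2$ at interior ones, since an interior $\xi_k$ is approached from both sides by the two adjacent branches of $X_k$).

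The core estimate is therefore $|X_{k,m}|$, i.e.\ the length of the $m$-th preimage of $Y_k$ under the branch $g_k$ fixing $\xi_k$. Here I would use the standard Thaler/Pomeau–Manneville asymptotics: from~\ref{itm:T4}, $|fx-x|\sim b_k|x-\xi_k|^{1+1/\alpha}$, so writing $u_m$ for the endpoint of $X_{k,m}$ nearest $\xi_k$ (equivalently the $m$-th iterate of a fixed reference point under $g_k$), the recursion $u_{m}-\xi_k = g_k(u_{m-1})-\xi_k$ combined with $f(u_m)=u_{m-1}$ yields $u_{m-1}-\xi_k - (u_m-\xi_k)\sim b_k|u_m-\xi_k|^{1+1/\alpha}$, a discrete analogue of the ODE $\dot y = -b_k y^{1+1/\alpha}$. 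Solving this (the classical computation, e.g.\ as in~\cite{Tha1980} or~\cite{Tha1983}) gives $|u_m-\xi_k|\sim (b_k m/\alpha)^{-\alpha}$, hence $|X_{k,m}|=|u_{m-1}-\xi_k|-|u_m-\xi_k|\sim \alpha(b_k/\alpha)^{-\alpha}m^{-(1+\alpha)} = b_k^{-\alpha}\alpha^{1+\alpha}m^{-(1+\alpha)}$. Plugging $m=n-1\sim n$ into the displayed asymptotic for $\mu(\tau^{(k)}=n)$ and collecting constants gives
\[
\mu(\tau^{(k)}=n)\sim e_k\,b_k^{-\alpha}\,\alpha^{1+\alpha}\Big(\sum_{j\neq k}h(g_j\xi_k)g_j'(\xi_k)\Big)\,n^{-(1+\alpha)}
=\alpha c_k\,n^{-(1+\alpha)},
\]
which is exactly the claim, with $c_k$ as defined just before the statement.

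The main obstacle I anticipate is making the two $\sim$-replacements uniform and rigorous rather than merely formal: first, that $h$ restricted to $X_{k,m}$ (pushed forward by $g_j$) really is $h(g_j\xi_k)(1+o(1))$ uniformly, which needs the boundedness and continuity of $h$ near but not at $\xi_k$ — this is available from Lemma~\ref{lem:gm} together with Proposition~\ref{prop:thaF} and the regularity results of~\cite{Tha1980} cited in the text; and second, the passage from the differential-equation heuristic to the genuine discrete recursion for $|u_m-\xi_k|$, where one must control the error in $|fx-x|\sim b_k|x-\xi_k|^{1+1/\alpha}$ uniformly over the relevant range and invoke a standard lemma on asymptotics of such recursions (monotone, with polynomially decaying increments). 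Both of these are well-trodden in the intermittent-maps literature, so I would cite~\cite{Tha1980,Tha1983} for the length asymptotics of $|X_{k,m}|$ and spell out only the branch-counting bookkeeping (the factor $e_k$ and the sum over $j\neq k$) and the final assembly of constants.
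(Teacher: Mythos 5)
Your proposal is correct and follows essentially the same route as the paper: decompose $\{\tau^{(k)}=n\}$ into the cylinders $Y_{j,k,n-1}=g_jX_{k,n-2}$, use continuity of $h$ away from the fixed points together with the mean value theorem (your change-of-variables formulation is equivalent) to reduce to $|X_{k,n-1}|$, and obtain $|X_{k,m}|\sim b_k^{-\alpha}\alpha^{1+\alpha}m^{-(1+\alpha)}$ from the standard recursion asymptotics for $z_{m}=g_k(z_{m-1})$, which the paper isolates as a sublemma; the factor $e_k$ arises exactly as you say, from the two connected components of $X_{k,m}$ at interior fixed points. The constants assemble to $\alpha c_k$ as in the paper's computation.
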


\begin{proof}
  Note that $\{ \tau^{ (k) } = n \} = \bigcup_{ j = 1, \ldots, d,\; j \neq k   } Y_{ j, k, n - 1 }$.

  Fix $j,k\in\{1,\dots,d\}$, $j\neq k$.
  For \( k \neq 1, d \), the sets \( X_{ k, n } \) have two connected components \( X_{ k , n }^\pm \) which lie to the left or right of \( \xi_k \) and both accumulate at \( \xi_k \) as $n\to\infty$.
  Accordingly, define \( Y_{ j,k,n }^\pm  \coloneqq g_j  X_{ k,n-1 }^\pm  \).
  It suffices to show that $\mu(Y_{j,k,n}^\pm)$ for $k\neq 1,d$ and
  $\mu(Y_{j,k,n})$ for $k=1,d$ have the asymptotic 
  \[
    b_k^{- \alpha } \alpha^{ 1 +  \alpha }
    h ( g_j \xi_k ) g_j' ( \xi_k ) n^{-(1+\alpha)}
    \quad\text{as $n\to\infty$}
    .
  \]
  We give the details for \( \mu ( Y_{ j,k,n }^+ ) \), \( k \neq 1,d \), the other cases being similar.

  By~\cite[Lemma 4]{Tha1983}, $h$ is continuous on  $[ 0 , 1 ] \setminus \{ \xi_{ 1 }, \ldots, \xi_{ d } \}$ (in the standard topology).
  Hence, 
  \begin{equation}\label{eq:h-calc}
    \mu ( Y_{ j , k, n  }^+ ) - h ( g_j\xi_k ) | Y_{ j,k,n }^+ |
    =
    \int_{ Y_{ j , k , n }^{ + } } ( h - h( g_j \xi_k ) )\,d\Leb
    \leq | Y_{ j , k , n }^{ + }  |  \sup_{  Y_{j,k,n}^{ + } } | h - h( g_j \xi_k ) |
  \end{equation}
  It follows that
  $\mu ( Y_{ j , k, n  }^+ )  \sim h ( g_j \xi_k ) | Y_{ j,k,n }^+ |$.
  By the mean value theorem, there exists \( z_n \in X_{ k,n }^+ \) so that
  $| Y_{ j,k,n }^+ | = g_j' (z_n) | X_{ k,n-1 }^+ | \sim g_j'( \xi_k ) | X_{ k,n-1 }^+|$. Combining these last two estimates,
  \begin{equation}
    \label{eq:mvt}
    \mu ( Y_{ j , k, n  }^+ )  \sim h ( g_j \xi_k ) 
    g_j'( \xi_k ) | X_{ k,n-1 }^+|. 
  \end{equation}
  It remains to estimate \( | X_{k,n }^+ | \). We recall the following standard calculation.

  \begin{sublemma}[{see for example \cite[Lemma 4.8.6]{Aar1997}}]
    \label{sublem:recursion}
    Suppose that \( T : [0, C] \to [0, \infty) \) is such that \( Tx > x \) for each \( x \in (0,C] \) and  \( Tx \sim x+ b x^{ 1 + p } \) as \( x \to 0 \). Let \( z_{ n  } = T  z_{ n + 1  }   \). Then
    \[
      z_n \sim ( p b n )^{ -1/p }
      \quand
      z_{ n  } - z_{  n + 1 }  \sim b^{ -1/p } ( p n  )^{ -(1+ 1/p) }
      \quad \text{as } n \to \infty
    \]
  \end{sublemma}

  \begin{proof}
    It follows from the assumptions that the sequence \( z_n \) is strictly decreasing with \( z_n \to 0 \). We then calculate
    \[
      z_n^{-p}\sim
      z_{ n+1 }^{- p }  (1 + b z_{ n + 1 }^{ p } )^{ -p }
      = z_{n+1}^{-p}  ( 1 - p  b z_{n+1}^{p} + o ( z_{n+1}^{2p} ))
      = z_{n+1}^{- p }  - pb + o(1)
    \]
    as \( n\to \infty \).
    Summing over $n$, this yields  \(z_{n}^{-p} = p b n + o(n)\) which gives the estimate for $z_n$. Finally, \(  z_n - z_{ n + 1 }  =  Tz_{ n+1 } - z_{ n + 1 }  \sim b z_{ n + 1 }^{ 1+p} \sim b^{ -1/p } ( p n  )^{ -(1+ 1/p) } \).
  \end{proof}

  Recalling~\ref{itm:T4}, the estimate for $| X_{ k, n }^{+} |$  reduces after a  change of coordinates to the situation in
  Sublemma~\ref{sublem:recursion} with \( p = 1/\alpha \) and \( b = b_k \). Hence
  \( | X_{ k, n }^{+} | \sim b_k^{ -\alpha } (n/\alpha)^{ -(1 + \alpha) } \). 
  Combining this with \eqref{eq:mvt}, we obtain
  \[
    \mu ( Y_{ j , k, n }^+ ) 
    \sim  h ( g_j \xi_k ) g_j' ( \xi_k ) b_k^{- \alpha } (\alpha/n)^{ 1 +  \alpha }
  \]
  as required.
\end{proof}

\subsection{Intermittent maps with critical points and/or singularities}\label{sec:clm}

We now consider a class of intermittent interval maps with two branches that possibly admit a critical point and/or a singularity at the discontinuity. This class of maps include the maps in \( \cT \) with \( d = 2 \) and the maps described in \cite{CoaLuzMuh2023}. We define \( \cF \) to be set of maps \( f : [-1,1] \to [-1,1] \) which satisfy the following conditions.

\begin{description}
  \item[{F0\label{itm:F0}}]
  There exists a \( c \in (-1,1) \) such that the restrictions
  \(f_{-} \coloneqq f : (-1,c) \to (-1,1) \) and \(f_{+} : (c,1) \to (-1,1)\) are \(C^2\) orientation preserving diffeomorphisms with no fixed points.
\end{description}

\begin{remark}
  For notational simplicity we will assume that \( c = 0 \). Notice also that we do not assume that the \(f_{ \pm }\) extend to \(C^2\) functions on the closure of \(I_{ \pm }\).
\end{remark}

\begin{description}
  \item[{F1\label{itm:F1}}] There exist \(\ell_{+},\ell_{-} > 0\) and \(k_+,k_- > 0\) such that
  \begin{equation}
    \label{eq:def-clm}
    f (x)
    =
    \begin{cases}
      x + b_-(1 +  x )^{ 1 + \ell_{-} } + o ( (1 + x)^{ 1 + \ell_- } ), &\text{for } x \in U_{ -1 }\\
      1 - a_- | x |^{ k_- },  &\text{for } x \in U_{ 0- }\\
      - 1 + a_+ | x |^{ k_+ }, &\text{for } x \in U_{ 0+ }\\
      x - b_+( 1 -  x )^{ 1 + \ell_{+} } + o ( (1 - x)^{ 1 + \ell_+ } ) & \text{for } x \in U_{ +1 }\\
    \end{cases}
  \end{equation}
  whenever \( k_+, k_- \neq 1 \)
  for some \(a_{ \pm }, b_{ \pm } > 0\), and some neighbourhoods \( U_{ -1 }, U_{ 0- } \) of \( -1, 0 \) in \( [-1,0] \) and some neighbourhoods \( U_{ +1 }, U_{ 0+ } \) of \( 1,0 \) in \( [0,1] \).
  If \( k_+ = 1 \) and/or \( k_- = 1 \), then we replace the corresponding lines in~\eqref{eq:def-clm} with the assumption that \( f' (0  - ) = a_- > 1 \) and/or  \(f' (0+) = a_+ > 1 \) respectively.
\end{description}

\begin{remark}\label{rem:image-of-U-0}
  It is convenient to assume that \( f  U_{ 0 \pm }  \subset U_{ \mp 1 } \). Notice that this assumption poses no restriction on~\ref{itm:F1} as \( U_{ 0 \pm } \) can be taken to be arbitrarily small.
\end{remark}

\begin{remark}
  This definition is more general than the one in~\cite{CoaLuzMuh2023} as~\ref{itm:F1} stipulates only an asymptotic behaviour near the fixed points. However, it does not include the maps in~\cite{MubSch2022} due to the restriction mentioned in Remark~\ref{rmk:H2}. We expect that our results hold also for the maps in~\cite{MubSch2022}.
\end{remark}

Suppose that \(f\) satisfies~\ref{itm:F0} and let \(\gamma_{ \pm } \in I_{ \pm }\) be the two points of period \( 2 \) for \(f\). Define the intervals
\begin{equation}\label{eq:clm-def-partition}
  Y \coloneqq [\gamma_- , \gamma_+ ], \quad
  X_{\pm, n} \coloneqq f^{ -n }_{ \pm } Y, \quand
  Y_{ \pm, n + 1 } \coloneqq f_{ \pm }^{ - 1} X_{ \mp, n }.
\end{equation}
By definition, for $n \ge 1$,  \( f : Y_{ \pm, n + 1 } \to X_{ \mp, n } \) and \( f : X_{ \pm, n } \to X_{ \pm, n - 1 } \) and $f : X_{\pm, 1} \to Y$ are bijections. Moreover, from~\ref{itm:F0}, we know that $X_{\pm,n}$ are consecutive intervals, accumulating at $\pm 1$. So, the $\{ X_{\pm, n } : n \ge 1\}$ partition $X \setminus Y$ and, as $f (Y) = X \setminus Y$, the $\{ Y_{\pm, n} : n \ge 2\}$ partition $Y$. Thus, \(\cP \coloneqq \{ Y_{ \pm, j }, X_{ \pm , n } : n \geq 1, j \geq 2 \}\) forms a Markov partition for $f$. Our final condition will ensure that our maps have good expansion and distortion properties.

\begin{description}
  \item[F2\label{itm:F2}]
  \( f \) is convex (resp.\ concave) on \( U_{ -1 } \) (resp.\ \( U_{ 1 } \)) and moreover
  \begin{enumerate}
    \item\label{itm:F2.a}
    If \( f \) is not \( C^2 \) on \( \overline U_{-1} \) (resp.\ \( \overline U_{+1} \)), then \( f '' (x) \lesssim (1 + x)^{ \ell_- - 1 }  \) (resp.\ \( |f''( x )| \lesssim ( 1 - x )^{ \ell_+ - 1 }  \)).

    \item\label{itm:F2.b}
    If \( k_{ \pm } \neq 1 \), then there exists a \( \lambda > 1 \) such that  \( ( f^n )' (x) > \lambda  \)  for each \( x \in Y_{ \pm , n} \) and each \( Y_{ \pm , n} \not\subset U_{ 0\pm } \).
  \end{enumerate}
\end{description}

\begin{remark}
  Notice that~\ref{itm:F2}.\ref{itm:F2.a} is only assumed when the map is not \( C^{ 2 } \) at the fixed points and that~\ref{itm:F2}.\ref{itm:F2.b}  is only an assumption about the map outside of the neighbourhoods \( U_{ 0\pm }, U_{ \pm 1 } \) and is trivially satisfied if \( f ' (x) > 1 \) for each \( x \not\in \{ -1, 1 \} \).
\end{remark}

We let \( \alpha_+ \coloneqq 1 / \ell_+k_- \), \( \alpha_- \coloneqq 1 / \ell_- k_+ \) and define
\[
  c_{ 1 } \coloneqq
  h(0) a_-^{ - 1/k_-} ( \ell_+ b_+ )^{ - 1 / \alpha_- } \alpha_-^{2},
  \quand
  c_{ 2 } \coloneqq
  h(0) a_+^{ - 1/k_+} ( \ell_- b_- )^{ - 1 / \alpha_+ } \alpha_+^{2}.
\]
We will assume that \( \alpha_{ + } = \alpha_{ - } = \alpha \in (0,1) \).

\begin{theorem}\label{thm:clm}
  Suppose that \( f \in \cF \) with \(  \alpha \in (0,1)\). Then~\ref{itm:H1}--\ref{itm:H4} hold.
\end{theorem}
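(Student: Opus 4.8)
The plan is to check that every $f\in\cF$ fits the abstract framework of Section~\ref{sec:abstract} with $X=[-1,1]$, $m$ normalised Lebesgue measure, $d=2$, $\xi_1=-1$, $\xi_2=1$, the Markov partition $\cP$ and inducing set $Y=[\gamma_-,\gamma_+]$ described above, and $F=f^\tau:Y\to Y$; the theorem is then precisely that~\ref{itm:H1}--\ref{itm:H4} hold. Setting $X_1\coloneqq[-1,\gamma_-)$ and $X_2\coloneqq(\gamma_+,1]$, assumption~\ref{itm:H2}(a,b) is immediate: $\xi_k$ lies in the relative interior of $X_k$, while $f(X_1)=[-1,\gamma_+)=X_1\cup(Y\setminus\{\gamma_+\})$ and symmetrically for $X_2$, so an orbit in $X_k$ stays there until its next visit to $Y$, which gives $\{\tau^{(1)}=n\}=Y_{+,n+1}=f_+^{-1}X_{-,n}$ and $\{\tau^{(2)}=n\}=Y_{-,n+1}=f_-^{-1}X_{+,n}$. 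Since both branches are onto $(-1,1)$ one checks as in Section~\ref{sec:thaler} that $f$ (and hence $F$) is topologically mixing. For~\ref{itm:H3} one verifies $Q_0^\rho=\rho 1_Y\in\cB_\theta(Y)$ directly from~\eqref{eq:Q}, and for $r\ge1$ invokes Remark~\ref{rem:suff-cond-H3}: $f$ has two branches and is $C^2$ on $f^{-r}Y$, which is a finite union of intervals bounded away from the exceptional set $\{-1,0,1\}$ because a point near $\pm1$ or near $0$ needs arbitrarily many iterates to return to $Y$. One takes $K(X)=C^\eta(X)$ for any fixed $\eta\in(0,1]$.

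The crux is the analogue of Proposition~\ref{prop:thaF}: \emph{$F:Y\to Y$ is a topologically mixing Gibbs--Markov map}. On the partition element $Y_{-,k}$ (return time $\tau=k$) one has $F|_{Y_{-,k}}=f_+^{\,k-1}\circ f_-$, which maps $Y_{-,k}$ bijectively onto all of $Y$, and symmetrically for $Y_{+,k}$; thus $F$ is full-branch, so the finite-images property and topological mixing are automatic. For the distortion estimate I would first establish uniform expansion directly: on the finitely many shallow branches (those not contained in a neighbourhood of $0$) this is~\ref{itm:F2}.\ref{itm:F2.b}, while on the deep branches the product of the $f_+'$-values along the orbit through $X_{+,k-1}$ grows like a power of $k$ and dominates the factor coming from the critical or singular branch of $f$ at $0$, so that $\inf_{Y_{\pm,k}}|F'|\gtrsim\diam Y/|Y_{\pm,k}|\to\infty$. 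Then on $Y_{-,k}$ one writes $\log|F'|=\log\bigl|(f_+^{\,k-1})'\circ f_-\bigr|+\log|f_-'|$; since $f_+^{\,k-1}\circ f_-=F$ there, the first summand is the distortion of $f_+^{\,k-1}$ along an orbit approaching the neutral fixed point $1$ and is controlled by $C|Fx-Fy|$ exactly as in~\eqref{eq:dist-calc-1}--\eqref{eq:dist-calc-3}, with the uniform bound on $f''$ replaced by the convexity in~\ref{itm:F2} together with the estimate~\ref{itm:F2}.\ref{itm:F2.a} near $1$ when $f$ is not $C^2$ there. For the second summand, near $0^-$ one has $\log|f_-'(x)|=\log(a_-k_-)+(k_--1)\log|x|$, so its variation over $Y_{-,k}$ is $|k_--1|$ times $|x-y|/\operatorname{dist}(Y_{-,k},0)$; using $|x-y|\lesssim(|Y_{-,k}|/\diam Y)\,|Fx-Fy|$ (from the distortion of $f_+^{\,k-1}$ and of $f_-$ on $Y_{-,k}$) together with $|Y_{-,k}|/\operatorname{dist}(Y_{-,k},0)=O(k^{-1})$ (from the asymptotics in the next paragraph), this is again $\lesssim|Fx-Fy|$. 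Finally uniform expansion gives $|Fx-Fy|\lesssim\theta^{s(x,y)}$, so $\log(1/|F'|)$ is $d_\theta$-Lipschitz on elements of $\cP_Y$. Granted this, Lemma~\ref{lem:gm} yields $\mu(Y)<\infty$ and $h1_Y,h^{-1}1_Y\in\cB_\theta(Y)$; since $f^{-1}X_{\pm,n-1}=X_{\pm,n}\,\dot\cup\,Y_{\mp,n}$ one gets $\mu(X_{\pm,n})\le\mu(X_{\pm,1})\le\mu(Y)$ for all $n$, so any compact subset of $[-1,1]\setminus\{-1,1\}$ lies in $Y$ together with finitely many $X_{\pm,n}$ and hence has finite $\mu$-measure, which is~\ref{itm:H1}.

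It then remains to prove the tail estimates, following the lemma of Section~\ref{sec:thaler}. One computes $\mu(\tau^{(1)}=n)=\mu(Y_{+,n+1})$ and $\mu(\tau^{(2)}=n)=\mu(Y_{-,n+1})$; since $h$ is continuous at $0$ (by the general theory for these maps, cf.~\cite{CoaLuzMuh2023,Tha1983}), $\mu(Y_{+,n+1})\sim h(0)|Y_{+,n+1}|$ and $\mu(Y_{-,n+1})\sim h(0)|Y_{-,n+1}|$. The interval $Y_{+,n+1}=f_+^{-1}X_{-,n}$ lies near $0^+$; applying Sublemma~\ref{sublem:recursion} to $f$ near the neutral fixed point $-1$ (with $p=\ell_-$, $b=b_-$) gives $\operatorname{dist}(X_{-,n},-1)\sim(\ell_-b_-n)^{-1/\ell_-}$ and $|X_{-,n}|\sim b_-^{-1/\ell_-}(\ell_-n)^{-(1+1/\ell_-)}$, and then inverting $f_+(x)=-1+a_+|x|^{k_+}$ and applying the mean value theorem yields $|Y_{+,n+1}|\sim a_+^{-1/k_+}\tfrac1{k_+}\operatorname{dist}(X_{-,n},-1)^{1/k_+-1}|X_{-,n}|$, which is asymptotic to a constant multiple of $n^{-(1+\alpha_-)}$ since $\alpha_-=1/(\ell_-k_+)$. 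The symmetric computation near $0^-$ and $+1$ gives $|Y_{-,n+1}|\sim(\mathrm{const})\,n^{-(1+\alpha_+)}$. As $\alpha_+=\alpha_-=\alpha$, summing over $n$ gives $\mu(\tau^{(k)}>n)\sim c_k n^{-\alpha}$ with $c_1,c_2$ the constants defined before the theorem, which is~\ref{itm:H2}(c); and $\mu(\tau=n)=\mu(Y_{+,n})+\mu(Y_{-,n})=O(n^{-(1+\alpha)})$, which gives~\ref{itm:H4}.

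I expect the main obstacle to be the distortion estimate for $F$: away from the Thaler class one must simultaneously absorb the singular second derivative at the neutral fixed points when $f$ fails to be $C^2$ there (handled via the convexity/concavity in~\ref{itm:F2} and the bound~\ref{itm:F2}.\ref{itm:F2.a}) and the singularity or critical point of $f_\pm$ at $0$ (handled via the explicit form~\ref{itm:F1}, the $O(k^{-1})$ relative width of $Y_{\pm,k}$, and the power growth of $|F'|$ on the deep branches). Once $F$ is known to be Gibbs--Markov, the remaining verifications follow the pattern of Section~\ref{sec:thaler} closely.
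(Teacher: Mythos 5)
Your overall route is the paper's: the same inducing interval $Y=[\gamma_-,\gamma_+]$ with partition $\{Y_{\pm,n}\}$, the same asymptotics for $|X_{\pm,n}|$ and $|Y_{\pm,n}|$ via Sublemma~\ref{sublem:recursion}, distortion controlled through~\ref{itm:F2}, and the tails obtained from $\mu(Y_{\pm,n})\sim h(0)|Y_{\pm,n}|$. Two steps, however, have genuine gaps.

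First, uniform expansion of $F$. You cover the branches $Y_{\pm,n}\not\subset U_{0\pm}$ by~\ref{itm:F2}.\ref{itm:F2.b} and the branches with $n$ large by the asymptotic lower bound $F'\gtrsim n^{1+\alpha}$ on $Y_{\pm,n}$, but this leaves the finitely many branches that \emph{are} contained in $U_{0\pm}$ yet whose index is too small for the asymptotic bound to exceed $1$. Nothing in~\ref{itm:F0}--\ref{itm:F2} forces $F'>1$ there: if, say, $k_+$ is large, $f_+'$ is tiny on the first branch inside $U_{0+}$ and the compensating factor $(f_-^{\,m-1})'$ need not be large. The paper closes exactly this gap with the sublemma inside Proposition~\ref{lem:clm-expansion}: writing $\phi=f_+\circ f_-\circ f_+$ one shows $f'(y)f'(fy)/f'(\phi(y))>1$ on every deep branch, whence $F'(y)\ge F'(\phi^{j}(y))$, and induction pushes the expansion from the shallow branches (where~\ref{itm:F2}.\ref{itm:F2.b} applies) down to all deep ones. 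You need this, or an equivalent device; the asymptotics alone do not suffice. Relatedly, your bound $\inf_{Y_{\pm,k}}|F'|\gtrsim \diam Y/|Y_{\pm,k}|$ is a mean-value-theorem statement and becomes pointwise only once bounded distortion is available, so the logical order must be: distortion of the Thaler iterates $f_\pm^{\,k-1}$ first (this part is indeed independent of the behaviour at $0$), then the pointwise derivative estimate.

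Second, the continuity of $h$ at $0$. Lemma~\ref{lem:gm} gives $h1_Y\in\cB_\theta(Y)$, i.e.\ Lipschitz for the symbolic metric $d_\theta$, and this does \emph{not} imply continuity at $0$ in the usual topology, since infinitely many elements of $\cP_Y$ accumulate there. You cite the literature for this point, but $\cF$ is strictly more general than the classes of~\cite{CoaLuzMuh2023,Tha1983} (only asymptotic behaviour near the fixed points and a one-sided bound on $f''$ are assumed), so the continuity must be proved. The paper does this at the end of its proof: the Euclidean distortion bound for $F^n$ yields $|L_Y^n1(x)-L_Y^n1(y)|\lesssim|x-y|$, and Arzel\`a--Ascoli identifies $h1_Y$ with a genuinely Lipschitz function. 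Everything else in your outline --- the choice of $X_1,X_2$ and~\ref{itm:H2}(a,b), \ref{itm:H3} via Remark~\ref{rem:suff-cond-H3}, \ref{itm:H1} from $\mu(X_{\pm,n})\le\mu(Y)<\infty$, and the computation of $|Y_{\pm,n+1}|$ by inverting $f_\pm$ near $0$ --- matches the paper's argument.
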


Throughout this section we fix
\[
  X_-\coloneqq [0, \gamma_-), \quad
  X_+ \coloneqq (\gamma_+, 0],
\]
and let \( \tau : Y \to \N \) denote the first return time to \( Y \) and \( \tau^{ (\pm) } (x) \coloneqq \Card \{ n \leq \tau (x) : f^n (x) \in X_{ \pm } \} \). Let \( F:Y\to Y \) be the first return map. By construction, we obtain the following lemma.

\begin{lemma}\label{lem:clm-first-consequence}
  \( F|_{ Y_{ \pm , n} } = f^n \) and the interval \(Y\) dynamically separates \(X_-, X_+\).
\end{lemma}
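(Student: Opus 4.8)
The plan is to read both assertions off the combinatorics of the inducing scheme; only~\ref{itm:F0} and~\ref{itm:F1} are needed, with no appeal to distortion or expansion. I would first record the geometry of the two branches. By~\ref{itm:F0} each $f_{\pm}$ is an increasing bijection of $I_{\pm}$ onto $(-1,1)$, and by~\ref{itm:F1} it extends continuously to $\overline{I}_{\pm}$ with $f(-1)=-1$ and $f(1)=1$; since $\{\gamma_-,\gamma_+\}$ is the $2$-periodic orbit we also have $f(\gamma_-)=\gamma_+$ and $f(\gamma_+)=\gamma_-$. As $X_-$ and $X_+$ are the components of $X\setminus Y$ to the left of $\gamma_-$ and to the right of $\gamma_+$, monotonicity of $f_-$ on $[-1,\gamma_-]$ gives $f(X_-)\subseteq[-1,\gamma_+]=X_-\cup Y$, and symmetrically $f(X_+)\subseteq[\gamma_-,1]=X_+\cup Y$; the same monotonicity shows $f_{\pm}^{-1}$ carries any subset of $X_{\mp}$ into $Y$.

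The dynamical separation statement is then a one-line induction. If $x\in X_-$ and $f^ix\notin Y$ for all $0\le i\le j$, then by induction on $i$, using $f(X_-)\subseteq X_-\cup Y$, one gets $f^ix\in X_-$ for all such $i$; in particular $f^jx\notin X_+$, so the orbit cannot have passed from $X_-$ to $X_+$ without first entering $Y$. The symmetric argument handles the passage from $X_+$ to $X_-$. Equivalently, the excursion of any $y\in Y$ between consecutive returns to $Y$ is confined to one of $X_-$, $X_+$, which is exactly~\ref{itm:H2}(b).

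For the identity $F|_{Y_{\pm,n}}=f^n$ ($n\ge2$) I would unwind the definitions. Fix $y\in Y_{-,n}=f_-^{-1}X_{+,n-1}$. Since $Y_{-,n}\subseteq Y\cap I_-$ we have $fy=f_-(y)\in X_{+,n-1}$, and since $X_{+,m}=f_+^{-m}Y$ with $f_+(X_{+,m})=X_{+,m-1}$ (setting $X_{+,0}\coloneqq Y$), iterating $f$ yields $f^my=f^{m-1}(fy)\in X_{+,\,n-m}$ for $1\le m\le n$. For $1\le m\le n-1$ this lies in $X_+\subseteq X\setminus Y$, while $f^ny\in X_{+,0}=Y$; hence $\tau(y)=n$ and $F(y)=f^{\tau(y)}y=f^n y$. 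Interchanging the roles of $+$ and $-$ gives the statement for $Y_{+,n}$. One moreover reads off $f(Y_{\pm,n})=X_{\mp,n-1}$ and $f^{n-1}(X_{\mp,n-1})=Y$, so $F$ maps each $Y_{\pm,n}$ bijectively onto $Y$, which is the full-branch property underlying the Markov structure.

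No step is genuinely difficult; the only point requiring some care is the bookkeeping at the interval endpoints — checking, for instance, that $f_-^{-1}X_{+,n-1}$ does sit inside $Y$ and that the cylinders $X_{\pm,m}$, $Y_{\pm,m}$ exhaust $X_{\pm}$, $Y$ up to a countable (hence null) set. Both follow from monotonicity of the branches together with $f_{\pm}^{-m}\gamma_{\pm}\to\pm1$ as $m\to\infty$, which holds because $\pm1$ is the only fixed point of $f$ in $\overline{I}_{\pm}$.
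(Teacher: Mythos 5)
Your proof is correct and matches the paper's approach: the paper states this lemma with no proof beyond the phrase ``by construction,'' and your argument is exactly the routine verification from the definitions of $Y$, $X_{\pm,n}$, $Y_{\pm,n}$ and the monotonicity of the branches that this phrase leaves implicit. The endpoint bookkeeping you flag is indeed the only delicate point, and it is harmless for the measure-theoretic statements that follow.
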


In the remainder of this section we will show that \( f \) satisfies assumptions~\ref{itm:H1}--\ref{itm:H3} and thus conclude the proof of Theorem~\ref{thm:clm}. Notice that \( X_{ \pm, n } \) and \( Y_{ \pm , n} \) here play the same roles as \( \Delta_n^{ \pm  } \)  and \( \delta_n^{ \pm } \) respectively in \cite{CoaLuzMuh2023}. The next Lemma shows that the asymptotic behaviour of the sizes of these partition elements remains exactly the same as in~\cite{CoaLuzMuh2023}.

\begin{lemma}
  \label{lem:clm-partition-estimates}
  \[
    | X_{ \pm, n } | \sim b_{ \pm }^{ - 1 / \ell_{ \pm }} ( \ell_{ \pm }  n )^{ -1 - 1 / \ell_{ \pm } }
    \quand
    \Leb ( \tau^{(\pm)} = n )
    = | Y_{ \pm , n} |
    \sim a_{ \pm }^{ - 1/k_{ \pm }} ( \ell_{ \mp } b_{ \mp } )^{ - \alpha_{ \pm } } \alpha_{ \pm } n^{-1 -  \alpha_{ \pm }}.
  \]
\end{lemma}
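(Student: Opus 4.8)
The plan is to reduce both asymptotics to the recursion estimate in Sublemma~\ref{sublem:recursion} and then transport the estimate for $|X_{\pm,n}|$ through the inverse branch $f_\pm^{-1}$ and an application of continuity of the density $h$ at $0$, exactly as in the Thaler case treated above. I would organise it as follows.

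First, I would establish the asymptotic for $|X_{\pm,n}|$. By definition $X_{\pm,n}=f_\pm^{-n}Y$, so the right endpoints (for $X_{-,n}$, say) form a monotone sequence $z_n$ converging to the fixed point $-1$ with $z_n = f_-(z_{n+1})$, and near $-1$ the map satisfies $f_-(x)=x+b_-(1+x)^{1+\ell_-}+o((1+x)^{1+\ell_-})$ by~\ref{itm:F1}. After the change of coordinates $x\mapsto 1+x$ this is precisely the setting of Sublemma~\ref{sublem:recursion} with $p=\ell_-$ and $b=b_-$, giving $z_n-z_{n+1}\sim b_-^{-1/\ell_-}(\ell_- n)^{-(1+1/\ell_-)}$; since $|X_{-,n}|$ equals the gap between consecutive such endpoints (up to the fixed endpoint at $-1$), this yields $|X_{-,n}|\sim b_-^{-1/\ell_-}(\ell_- n)^{-1-1/\ell_-}$. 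The argument for $X_{+,n}$ is symmetric. One technical point to check is that the error term $o((1+x)^{1+\ell_-})$ in~\ref{itm:F1} is genuinely negligible in the summation step of the sublemma's proof; this is routine since the sublemma only uses $Tx\sim x+bx^{1+p}$.

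Next, I would pass to $|Y_{\pm,n}|$. By construction $Y_{\pm,n+1}=f_\pm^{-1}X_{\mp,n}$, and by Remark~\ref{rem:image-of-U-0} (via the $U_{0\pm}$ neighbourhoods) the relevant pieces eventually lie where $f$ has the form $1-a_-|x|^{k_-}$ near $0$, i.e. $f_-^{-1}$ near the endpoint $1$ behaves like $y\mapsto$ a constant times $(1-y)^{1/k_-}$. Applying the mean value theorem on $X_{\mp,n}$ and using the explicit local form of the inverse branch, one gets $|Y_{\pm,n+1}|\sim a_\pm^{-1/k_\pm}\cdot(\text{derivative factor})\cdot|X_{\mp,n}|^{1/k_\pm}$; inserting $|X_{\mp,n}|\sim b_\mp^{-1/\ell_\mp}(\ell_\mp n)^{-1-1/\ell_\mp}$ and simplifying with $\alpha_\pm=1/(\ell_\mp k_\pm)$ produces the stated power $n^{-1-\alpha_\pm}$ with the claimed constant $a_\pm^{-1/k_\pm}(\ell_\mp b_\mp)^{-\alpha_\pm}\alpha_\pm$. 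Finally, to go from Lebesgue measure to $\mu$, I would invoke continuity of $h$ at $0$ (the image of $Y_{\pm,n}$ accumulates at $0$, not at a fixed point, so $h$ is continuous there by the regularity of the density) and estimate $\mu(Y_{\pm,n})-h(0)|Y_{\pm,n}|$ by $|Y_{\pm,n}|\sup_{Y_{\pm,n}}|h-h(0)|=o(|Y_{\pm,n}|)$, exactly as in~\eqref{eq:h-calc}; combined with the Lebesgue asymptotic and $h(0)$ being absorbed into $c_1,c_2$ this gives the measure statement. The identity $\Leb(\tau^{(\pm)}=n)=|Y_{\pm,n}|$ is immediate from Lemma~\ref{lem:clm-first-consequence} since $\tau^{(\pm)}=n$ exactly on $Y_{\pm,n}$.

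The main obstacle I anticipate is the bookkeeping of the composition of exponents: one must carefully track which fixed point ($-1$ or $1$) governs $X_{\mp,n}$ versus which branch ($f_\pm$) is inverted to form $Y_{\pm,n}$, since these are \emph{opposite} sides, and the mismatch $\alpha_\pm=1/(\ell_\mp k_\pm)$ reflects precisely that crossing. A second delicate point is handling the cases $k_\pm=1$, where~\ref{itm:F1} replaces the explicit form by $f'(0\pm)=a_\pm>1$: here the inverse branch is locally affine with slope $a_\pm^{-1}$ near the endpoint, and one checks that the formula degenerates consistently (with $\alpha_\pm=1/\ell_\mp$), so the stated asymptotic still holds with the same constant.
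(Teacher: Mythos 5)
Your overall route is the paper's: apply Sublemma~\ref{sublem:recursion} to \( z_n = 1+\gamma_n \) (with \( b=b_- \), \( p=\ell_- \)) to get both \( z_n\sim(b_-\ell_- n)^{-1/\ell_-} \) and \( |X_{\pm,n}|\sim b_\pm^{-1/\ell_\pm}(\ell_\pm n)^{-1-1/\ell_\pm} \), then pull \( X_{\mp,n} \) back through the explicit local form of \( f_\pm^{-1} \) near \( 0 \). (The paper phrases the second step as computing the endpoint asymptotic \( f_+^{-1}\gamma_n=((1+\gamma_n)/a_+)^{1/k_+}\sim a_+^{-1/k_+}(\ell_-b_-n)^{-1/(\ell_-k_+)} \) and differencing; your mean-value-theorem version is equivalent. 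Your closing paragraph about the density \( h \) is not needed: the lemma is a pure Lebesgue statement, and the passage to \( \mu \) happens only later, in the proof of Theorem~\ref{thm:clm}.)

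One intermediate step is wrong as written and would fail if followed literally: the formula \( |Y_{\pm,n+1}|\sim a_\pm^{-1/k_\pm}\cdot(\text{derivative factor})\cdot|X_{\mp,n}|^{1/k_\pm} \). The power \( |X_{\mp,n}|^{1/k_\pm} \) is the scaling of \( f_\pm^{-1} \) on an interval \emph{abutting} the fixed point \( \mp1 \); but \( X_{\mp,n} \) sits at distance \( z_n\sim(b_\mp\ell_\mp n)^{-1/\ell_\mp} \) from \( \mp1 \), far larger than its length \( \asymp n^{-1-1/\ell_\mp} \). Inserting the asymptotic for \( |X_{\mp,n}| \) into your formula gives exponent \( -(1+1/\ell_\mp)/k_\pm=-1/k_\pm-\alpha_\pm \), not \( -1-\alpha_\pm \), and no sensible choice of ``derivative factor'' (in particular not \( (f_\pm^{-1})' \) evaluated on \( X_{\mp,n} \)) repairs the mismatch. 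The correct mean value computation keeps the \emph{first} power of the length: \( |Y_{\pm,n+1}|\sim|(f_\pm^{-1})'(\xi_n)|\,|X_{\mp,n}| \) with \( \xi_n\in X_{\mp,n} \) and \( (f_\pm^{-1})'(y)=\tfrac{1}{k_\pm}a_\pm^{-1/k_\pm}(1\mp y)^{1/k_\pm-1} \); evaluating at \( 1\mp\xi_n\sim z_n \) and multiplying by \( |X_{\mp,n}| \) yields \( \alpha_\pm a_\pm^{-1/k_\pm}(\ell_\mp b_\mp)^{-\alpha_\pm}n^{-1-\alpha_\pm} \) as claimed. With that correction (and your correct remarks on the crossing of indices and the degenerate case \( k_\pm=1 \)), the argument coincides with the paper's.
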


\begin{proof}
  We will only explicitly prove the estimates for \( Y_{ + , n} \) and \( X_{ - , n } \) as the other estimates follow in the same way.
  Let \(\gamma_n \coloneqq f^{-n}_- \gamma\) so that  \( X_{ - , n } = [\gamma_{n}, \gamma_{n-1}]\).
  As in the previous section we can apply Sublemma~\ref{sublem:recursion} to the sequence \( z_{n} \coloneqq 1 + \gamma_{n} \) with \( b = b_- \) and \( p = \ell_- \) to obtain
  \begin{equation}
    \label{eq:clm-bound-1-gamma}
    z_n \sim (b_- \ell_- n )^{ - 1 / \ell_-},
    \quand 
    | X_{ \pm, n } | \sim b_{ \pm }^{ - 1 / \ell_{ \pm }} ( \ell_{ \pm }  n )^{ -1 - 1 / \ell_{ \pm } }.
  \end{equation}

  Now, notice that as \(y \downarrow 0\) we have by definition \( f_{+}^{-1} y = \big(\frac{ y + 1 }{a_+} \big)^{1/k_+} \). It follows that \(f_+^{-1}  \gamma_n  = \big(\frac{1 + \gamma_n}{a_+}\big)^{1/k_+}\). 
From \eqref{eq:clm-bound-1-gamma} we obtain  \(f^{ -1 }_+\gamma_n \sim a_+^{ - 1/k_+} ( \ell_- b_- n)^{-1/ (\ell_- k_+)} \) which yields the claimed asymptotics for \( |Y_{ \pm , n} | \).
\end{proof}

\begin{proposition}\label{prop:clm-gm}
  The map \( F : Y \to Y \)
  is a topologically mixing Gibbs-Markov map with respect to the partition $\cP_Y$.
\end{proposition}

Having established Lemma~\ref{lem:clm-partition-estimates} the proof of Proposition~\ref{prop:clm-gm} follows essentially verbatim from~\cite{CoaLuzMuh2023} replacing the roles of \( \delta_n^{ \pm } \) with \( Y_{ \pm , n} \) and \( \Delta_n^{ \pm } \) with \( Y_n^{ \pm } \). For completeness we include the main steps of this argument.

\begin{lemma}\label{lem:clm-expansion}
  There exists \( \lambda > 1 \) such that \( F'(y) > \lambda \) for all \( y \in Y \).
\end{lemma}

\begin{proof}
  We follow the proof of~\cite[Proposition 3.6.]{CoaLuzMuh2023}.
  We only consider the case \( y \in Y_{ +, n} \) as the case that \( y \in Y_{ - , n} \) is the same. Define the function \( \phi \coloneqq f_+^{-1} \circ f_- \circ f_+ \) and notice that \( \phi : Y_{ + , n + 1} \to Y_{ + , n } \) bijectively.

  \begin{sublemma}
    Let \( y \in Y_{ + , n} \) where \( n \) is such that \( Y_{ + , n} \subset U_{ 0+ } \), then
    \(
      \dfrac{ f' (y) f' (f y) }{ f' ( \phi (y)) }  > 1.
    \)
  \end{sublemma}

  \begin{proof}
    If \( k_+ \in (0,1) \) then \( f' \) is decreasing on \( U_{0+} \) and so, as \( y < \phi(y) \), we have that \( f ' ( y ) / f ' ( \phi (y ) ) > 1 \). By construction \( f(y) \in U_{-1} \) for every \( y \in U_{0+} \), so \( f' ( f y ) > 1 \) and we are finished.

    Assume now that \( k_+ > 1 \) and to ease notation set \( k = k_+, a = a_+, b = b_-, \ell = \ell_- \). Setting \( x = fy \) we recall that \( x \in U_{ -1} \) by Remark~\ref{rem:image-of-U-0} and using the convexity of \( f \) on \( U_{ -1 } \) we obtain that 
    \(
      f  x   \leq f( -1 ) + f ' ( x ) ( x + 1 )
      = -1 + f'(x)  ay^k\)
which in turn implies that
      $\phi (y) 
      \leq f' (x)^{ 1/k } y$.
    As \( y< \phi(y)\) and as \( ( f '(y) )/ ( f' ( \phi( y)) ) = ( y / \phi (y) )^{ k - 1 } \),
    \[
      \frac{f ' (y)}{ f' ( \phi (y)) } f' (fy)
      \geq \Big( \frac{\phi(y)}{y} \Big) \Big( \frac{ f' (x)^{ 1/k } y }{ y } \Big)^{ -k } f'(x)
      \geq 1.
    \]
  \end{proof}

  Let \( m_+ = \min \{ m : Y_{ +,  m } \subset U_{ 0+ } \} \). Condition~\ref{itm:F2}
  implies that \( F ' y \geq \lambda \)  for all  \( y \in Y_{ +, m }\) whenever \( m \leq m_+ \).
  The Sublemma above allows us to conclude that if $y \in Y_{+, m + 1}$ and $m + 1 \ge m_+$ then
\[
    (f^{m+1}) ' (y) = f'(y) \cdot f' (f y) \cdots f' (f^{m} y)
    = \frac{ f'(y) \cdot f' (f y) }{ f' ( \phi (y)) }  (f^{m})' ( \phi (y))
    \geq (f^{m})' ( \phi (y)),
\]
  and so \( F' y \ge F ( \phi (y)) \). Proceeding inductively, for any $y \in Y_n \subset U_{0+}$, we obtain \( F ' (y) \geq F ' ( \phi^{ n + 1 - m^+ } (y)) \ge \lambda\).
\end{proof}

\begin{lemma}\label{prop:clm-distortion}
  There exists a \( C > 0 \) and a \( \theta \in (0,1) \) such that
  \[
    \left| \log \frac{ F ' (x) }{ F'(y) } \right| \leq C \theta^{ s ( x, y ) }.
  \]
\end{lemma}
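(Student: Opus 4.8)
The plan is to mimic the distortion argument already carried out for Thaler maps in the proof of Proposition~\ref{prop:thaF}, but with two new sources of difficulty: the branches $f_\pm$ need not extend to $C^2$ maps on the closed interval, and orbits in $Y_{\pm,n}$ spend one step near the singularity/critical point $0$ where $f''$ may blow up. First I would fix $x,y$ in a common element $Y_{+,n}\in\cP_Y$ (the case $Y_{-,n}$ being symmetric) and write $Fx=f^n x$, decomposing the log-derivative ratio as a telescoping sum
\[
\log\frac{F'x}{F'y}=\sum_{i=0}^{n-1}\log\frac{f'(f^i x)}{f'(f^i y)}
=\sum_{i=0}^{n-1}\frac{|f''(z_i)|}{f'(z_i)}\,|f^i x-f^i y|,
\]
with $z_i$ between $f^i x$ and $f^i y$ by the mean value theorem, exactly as in~\eqref{eq:dist-calc-1}. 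The orbit segment $f^0x,\dots,f^{n-1}x$ passes first through the fixed-point neighbourhood $U_{-1}$ (for most indices $i$, the points $f^ix$ lie in $X_{-,n-i}$, which is eventually inside $U_{-1}$), then makes one step near $0$, landing in $U_{0+}$, and finally returns; I would split the sum accordingly into the "fixed-point part" and the "$O(1)$ many remaining steps".

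For the fixed-point part, the key input is~\ref{itm:F2}(\ref{itm:F2.a}): where $f$ fails to be $C^2$ near $-1$ one has $f''(x)\lesssim(1+x)^{\ell_--1}$, and since $f'\ge1$ there, the summands are bounded by $(1+z_i)^{\ell_--1}|f^ix-f^iy|$. Using the size estimate $|X_{-,m}|\sim b_-^{-1/\ell_-}(\ell_- m)^{-1-1/\ell_-}$ from Lemma~\ref{lem:clm-partition-estimates} together with $1+z_i\asymp |X_{-,m}|^{1/(1+\ell_-)}\cdot(\text{bounded})$ when $f^ix\in X_{-,m}$ — more simply, $1+z_i\lesssim (b_-\ell_- m)^{-1/\ell_-}$ for $m=n-i$ — one checks that $(1+z_i)^{\ell_--1}\lesssim m^{-(\ell_--1)/\ell_-}$, which is bounded when $\ell_-\le1$ and otherwise combines with the diameter factor; in all cases the product $(1+z_i)^{\ell_--1}|f^ix-f^iy|$ is $\lesssim |X_{-,m}|\cdot|X_{-,m}|^{-1/(1+\ell_-)}\cdot(\dots)$, and after the analogue of the distortion-transfer inequality~\eqref{eq:dist-calc-2} (valid because the accumulated distortion over the tail is bounded, which is precisely what the expansion Proposition~\ref{lem:clm-expansion} and the convexity in~\ref{itm:F2} give us) each term is $\lesssim |Fx-Fy|$ times a summable sequence in $m$. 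Summing over $i$ yields a bound $C_1|Fx-Fy|$. The $O(1)$ remaining steps — the step near $0$ and any bounded initial segment — contribute at most a constant times $|Fx-Fy|$ as well, since on those finitely many branches $f$ is a $C^2$ (or uniformly expanding) diffeomorphism onto its image with bounded distortion, and again $|f^ix-f^iy|\le e^{C}|Fx-Fy|$ by transferring distortion forward. Thus $\big|\log(F'x/F'y)\big|\le C|Fx-Fy|$, and then, since $s(x,y)=N$ forces $1\ge|F^Nx-F^Ny|\ge\lambda^{N-1}|Fx-Fy|$ by Proposition~\ref{lem:clm-expansion}, we conclude $|Fx-Fy|\le\lambda^{1-N}$ and hence $\big|\log(F'x/F'y)\big|\le C\lambda\,\theta^{s(x,y)}$ with $\theta=\lambda^{-1}$.

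The main obstacle I anticipate is controlling the single step that lands near $0$ together with the possible non-$C^2$ behaviour there: the critical/singular exponents $k_\pm$ mean $f''$ is unbounded as $x\to0$, so one cannot naively bound $|f''|_\infty$ on that branch as was done in~\eqref{eq:dist-calc-1}. The resolution is that this branch $f_+\colon (c,1)\to(-1,1)$ (restricted appropriately) is a diffeomorphism with \emph{bounded distortion} in the logarithmic sense — indeed $\log f'(x)=\log(a_+ k_+)+ (k_+-1)\log|x|$ up to lower-order terms, so $\big|\log f'(x)-\log f'(x')\big|\lesssim|\log|x|-\log|x'||$, and since $x,x'$ here are comparable points deep inside $Y_{+,n+1}$ whose ratio is controlled by the already-established expansion, this difference is $\lesssim\theta^{s(x,y)}$ directly. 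So rather than fighting the blow-up of $f''$, I would treat that branch via a logarithmic (rather than additive) distortion estimate and absorb it into the constant. Once the structure of the argument is in place, the remaining work is the routine bookkeeping of the geometric series in $m$ using Lemma~\ref{lem:clm-partition-estimates}, entirely parallel to~\cite{CoaLuzMuh2023}.
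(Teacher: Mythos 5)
Your proposal follows essentially the same route as the paper's proof: the telescoped mean-value estimate, a separate treatment of the single step near \(0\) (your logarithmic bound \(|\log f'(x)-\log f'(y)|\lesssim |x-y|/|x|\) is exactly the paper's \(f''(u_0)/f'(u_0)\lesssim u_0^{-1}\) estimate), the \ref{itm:F2}(\ref{itm:F2.a}) bound combined with \(|X_{-,m}|\sim m^{-1-1/\ell_-}\) to produce a summable \(m^{-2}\) tail, the two-pass transfer to \(|\log(F'x/F'y)|\le C|Fx-Fy|\), and the uniform expansion of Proposition~\ref{lem:clm-expansion} to convert this into \(C\theta^{s(x,y)}\). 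The only blemish is the parenthetical claim that \((1+z_i)^{\ell_--1}\) is bounded when \(\ell_-\le 1\) (it is bounded when \(\ell_-\ge 1\) and grows like \(m^{1/\ell_--1}\) otherwise), but, as you note, the product with the diameter factor is \(\lesssim m^{-2}\) in all cases, so nothing is affected.
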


\begin{proof}
  We only consider \( x,y \in Y_{ + , n} \) as the argument for \( x,y\in Y_{ -,  n } \) is the same. Calculating as in~\eqref{eq:dist-calc-1} one finds 
  \begin{align}
    \left| \log \frac{ ( f^{ n - j } )' ( f^{j} x) }{ ( f^{ n - j } ) ' ( f^{j} y) } \right|
    \leq
    \label{eq:clm-dist-bound-2}
    \left| \frac{ f''  u_0  }{ f' u_0 } \right| | Y_{ + , n} |
    + \sum_{k=1}^{ n - 1} \left| \frac{ f''  u_k }{ f'  u_k } \right| | X_{ - , n - k } |
  \end{align}
  Notice the \( f'' ( u_{ 0 } ) / f' ( u_{ 0 } ) \lesssim u_{ 0 }^{ -1 } \lesssim  (f^{ -1 } \gamma_n)^{ -1 } \lesssim ( 1 + \gamma_n )^{ -1/k_+ } \) and~\eqref{eq:clm-bound-1-gamma}  gives that \( ( 1 + \gamma^-_n )^{ 1 / k_+  } \lesssim n^{ -\alpha_- } \). So, as \( |Y_{ + , n}| \lesssim n^{ -1 - \alpha_- } \) 
  we find that the first term in~\eqref{eq:clm-dist-bound-2} is uniformly bounded in \( n \). If \( f \) is \( C^{ 2 } \) on \( \overline U_{ -1 } \) the second term in \eqref{eq:clm-dist-bound-2} is summable and whence uniformly bounded. Otherwise, by assumption \ref{itm:F2},~\eqref{eq:clm-bound-1-gamma} and Lemma \ref{lem:clm-partition-estimates} we have that \( f''( u_k ) | Y_{ + , n } | \lesssim ( 1 + \gamma_{ n - k }^- )^{ \ell_- - 1 } |Y_{ + , n - k }| \lesssim ( n - k )^{ -1 + 1/\ell_- } ( n - k )^{ -1 - 1/\ell_{ -} } \lesssim ( n - k )^{ -2 } \) which is summable.
  Thus,~\eqref{eq:clm-dist-bound-2} is uniformly bounded  and we can calculate as in~\eqref{eq:dist-calc-2} to find \( \log | F'x / F'y | \leq C | F x  - F y | \). Thus, proceeding in the same way as in the proof of Proposition~\ref{prop:thaF} one concludes the result with \( \theta \coloneqq \tilde{ \lambda }^{ - 1 } \).
\end{proof}

\begin{proof}[Proof of Proposition~\ref{prop:clm-gm}]
  The fact that $F$ is Gibbs-Markov with respect to $\cP_Y$ follows from Lemmas~\ref{lem:clm-expansion} and~\ref{prop:clm-distortion}.
  From~\eqref{eq:clm-def-partition} we obtain $F (Y_{ \pm , n } ) = f^{ n } ( Y_{ \pm , n } ) = Y$, so $F$ is full-branched and hence topologically mixing.
\end{proof}

\begin{lemma}\label{lem:clm:h3}
  $f$ satisfies~\ref{itm:H3}.
\end{lemma}

\begin{proof}
  We follow the argument given in Proposition~\ref{prop:thaF}. The main difference between our current setting and that of Proposition~\ref{prop:thaF} is that $|f'|$ is no longer necessarily bounded away from $0$ and $|f''|$ may not be finite at the points $\pm 1, 0\pm$ so we have to take a little more care.

  Let $K(X)=C^1(X)$,
  and let $\rho \in C^1(X)$. Then 
  $Q_r^\rho(y)= \frac{\rho(x_+)}{(f^r)'(x_+)} + \frac{\rho(x_-)}{(f^r)'(x_-)}$
for $y\in Y$,
where $x_{\pm} \in X_{\pm,r}$ is such that $f^r x_\pm = y$. Thus, $|Q_r^\rho|_\infty \le 2 |\rho|_\infty$.

  Let $y, y' \in Y$ and let $x,x' \in X_{\pm}$ be such that $f^r x = y$ and $f^r x' = y'$. Set $X^{(r)} = \bigcup_{ k = 1 }^{ r } X_{-, k }\cup X_{+,k}$. Notice that~\eqref{eq:checking-h3} holds as before, but now
  \begin{align*}
    |(f^r)'(x)-(f^r)'(x')| 
    &\le \left| f'|_{X^{(r)}} \right|_\infty^{r-1}
    \sum_{j=0}^{r-1} \left| f'(f^jx)-f'(f^jx') \right| \\
    &\le \left| f'|_{X^{(r)}} \right|_\infty^{r-1} \left|f''|_{X^{(r)}} \right|_\infty\sum_{j=0}^{r-1} |f^jx-f^jx'|.
  \end{align*}
  Moreover, $\inf_{ j = 0,\ldots, r - 1, \zeta \in X_{ - ,  r - j }}  (f^{ r - j})' (\zeta) > 0$ and so $|f^{j} x - f^j x'| \lesssim |f^r x - f^r x'|$,
  where the implied constant depends on $r$.  Thus,
  \[
    \Big|\frac{1}{(f^r)'(x)}-\frac{1}{(f^r)'(x')}\Big|
    \lesssim  |(f^r)'(x)-(f^r)'(x')|
    \lesssim | y - y' |.
  \]
  As before, $|\rho(x)-\rho(x')|\le |\rho'|_\infty |x-x'| \le |\rho'|_\infty |y-y'|$.
  Hence, by~\eqref{eq:checking-h3}
  \[
    |Q_r^\rho(y)-Q_r^\rho(y')| \lesssim |y-y'|.
  \]
  Finally, $F' (x) \ge \lambda$, so $|y - y'|\lesssim d_{\theta} (y,y')$ where $\theta$ is as in Lemma~\ref{prop:clm-distortion}. Hence $Q_r^\rho \in \cB_{\theta} (Y)$ as required.
\end{proof}

\begin{proof}
  [Proof of Theorem~\ref{thm:clm}]
  Proposition~\ref{prop:clm-gm} yields~\ref{itm:H1} and Lemma~\ref{lem:clm:h3} yields~\ref{itm:H3}. So, it only remains to show~\ref{itm:H2} and~\ref{itm:H4}. By Lemma~\ref{lem:gm} \( F \) preserves an absolutely continuous invariant measure \( \mu \) with density \( h \). We claim that \( h \) is continuous on \( Y \). Given the validity of this claim we can apply the same argument as in~\eqref{eq:h-calc} to conclude from Lemma~\ref{lem:clm-partition-estimates} that
  \[
      \mu ( \tau^{ \pm } = n ) = \mu( Y_{ \pm, n } )
      \sim h(0) | Y_{ \pm,  n } |
      \sim  h(0) a_{ \pm }^{ - 1/k_{ \pm }} ( \ell_{ \mp } b_{ \mp } )^{ - \alpha_{ \pm } } \alpha_{ \pm } n^{-1 -  \alpha_{ \pm }}.
  \]
  Together with Lemma~\ref{lem:clm-first-consequence}, this concludes~\ref{itm:H2} and~\ref{itm:H4}. 

  Let \( \cP _{ Y }^{ ( n ) } \) denote the \( n^{ th } \) refined partition of \( F \) and let \( a \in \cP_{ Y }^{ (n) } \).
  From the uniform expansion of \( F \) and the mean-value theorem we know that for \( j \leq n \), \( | F^{ j } x - F^{ j }y | \leq \lambda^{ -( n - j ) } | F^{ n } x - F^{ n } y | \) for every \( x,y \in a \). Moreover,  recalling that \( \log | F' x / F' y | \leq C | F x - F y | \) for \( x, y \in Y \) it follows that for any \( x,y \in a \)
  \begin{equation}\label{eq:dist-calc-again}
    \log \left| \frac{ ( F^{ n } )' x  }{ ( F^{ n } )' y } \right|
    \leq \sum_{ j = 0 }^{ n - 1 } \left| \frac{ ( F^{ j } )' x  }{ ( F^{ j } )' y } \right|
    \leq \frac{ C }{ \lambda - 1 } | F^{ n } x - F^{ n } y |,
  \end{equation}
  and so
  \( | ( F^{ n }) ' x / ( F^{ n } )'  y ) - 1 | \lesssim | F^{ n } x - F^{ n } y |  \). 
Moreover,~\eqref{eq:dist-calc-again} implies \( 1 / [ ( F^{ n } )' y ] \lesssim | a | \) for all \( y \in a \).
  Using the standard pointwise formula for the transfer operator \( L_{ Y } \)  for \( F \), we obtain
  \begin{align*}
    | L_{ Y }^{ n } 1 (x) - L_{ Y }^{ n } 1 (y) |
    &= \bigg| \sum_{  \tilde x \in F^{ -n } x, \, \tilde y \in F^{ -n  } y}
      \frac{ 1 } { (F^{ n }) ' \tilde x } - \frac{ 1 } { (F^{ n }) ' \tilde y } 
      \bigg|\\
    &\leq \sum_{ \tilde x \in F^{ -n } x, \, \tilde y \in F^{ -n  }  }
      \left| \frac{ (F^{ n }) ' \tilde y }{  ( F^{ n }) ' \tilde x } - 1 \right| \left| \frac{1}{  ( F^{ n } ) ' \tilde y } \right|\\
    &\lesssim  \sum_{ a \in \cP_{ Y }^{ ( n ) } } | x - y | | a |
      \lesssim | x - y |.
  \end{align*}
  Similarly, one can check that \( L_{ Y }^{ n } \) is uniformly bounded. The Arzel\`{a}-Ascoli Theorem then yields that \( \frac{ 1 }{ n } \sum_{ j = 1 }^{ n - 1 }  L_{ Y }^{ j } 1 \) has a subsequence which is uniformly convergent to some Lipschitz \( \tilde h  : Y \to \R \). As \( \tilde h \) is necessarily the density of an invariant measure it follows that \( h 1_{ Y }  = \tilde h \) yielding that \( h \) is Lipschitz (and hence continuous). This establishes our claim and concludes the proof.
\end{proof}

\appendix
\section{Series estimates}

In this appendix, we carry out some calculations required in Section~\ref{sec:mixing}.

\begin{lemma} \label{prop:series-1}
  Let $\alpha\in(0,1)$. Then
  $\lim_{n\to\infty}\sum_{j=1}^{n-1}(n-j)^{\alpha-1}j^{-\alpha}=\tfrac{\pi}{\sin\alpha\pi}$.
\end{lemma}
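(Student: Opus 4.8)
The plan is to recognise the sum as a Riemann sum for a Beta integral. Writing $t=j/n$, we have
\[
  \sum_{j=1}^{n-1}(n-j)^{\alpha-1}j^{-\alpha}
  = \frac1n\sum_{j=1}^{n-1}\phi\!\left(\tfrac{j}{n}\right),
  \qquad \phi(t)\coloneqq t^{-\alpha}(1-t)^{\alpha-1},
\]
and the candidate limit is
\[
  \int_0^1 t^{-\alpha}(1-t)^{\alpha-1}\,dt
  = \Gamma(1-\alpha)\,\Gamma(\alpha) = \frac{\pi}{\sin\alpha\pi},
\]
by the Beta integral and Euler's reflection formula. So it suffices to prove that these Riemann sums converge to $\int_0^1\phi$.

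The only subtlety is that $\phi$ has integrable singularities at $t=0$ and $t=1$, so convergence is not an immediate consequence of continuity. I would handle this by splitting off the endpoints. Fix $\delta\in(0,\tfrac12)$ and decompose $\{1,\dots,n-1\}$ into the middle block $M_\delta=\{j:\delta n\le j\le(1-\delta)n\}$ and the two edges. On $M_\delta$ the function $\phi$ is continuous, hence uniformly continuous, on $[\delta,1-\delta]$, so the standard Riemann sum argument gives $\frac1n\sum_{j\in M_\delta}\phi(j/n)\to\int_\delta^{1-\delta}\phi$ as $n\to\infty$, and $\int_\delta^{1-\delta}\phi\to\int_0^1\phi$ as $\delta\to0$ since $\phi\ge0$ and the integral converges.

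It remains to bound the edge contributions uniformly in $n$. Since $\alpha<1$, the factor $(n-j)^{\alpha-1}$ is increasing in $j$, so using $\sum_{j=1}^{m}j^{-\alpha}\lesssim m^{1-\alpha}$,
\[
  \sum_{j\le\delta n}(n-j)^{\alpha-1}j^{-\alpha}
  \le \big((1-\delta)n\big)^{\alpha-1}\sum_{j=1}^{\lfloor\delta n\rfloor}j^{-\alpha}
  \lesssim \big((1-\delta)n\big)^{\alpha-1}(\delta n)^{1-\alpha}
  \lesssim \delta^{1-\alpha},
\]
with an implied constant independent of $n$. The sum over the right edge $\{j>(1-\delta)n\}$ is handled the same way — either directly via the substitution $i=n-j$, or by noting that $j\mapsto n-j$ turns the whole sum into the same expression with $\alpha$ replaced by $1-\alpha$ — and is $\lesssim\delta^{\alpha}$, again uniformly in $n$. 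Letting $n\to\infty$ and then $\delta\to0$ in the decomposition then yields $\lim_{n\to\infty}\sum_{j=1}^{n-1}(n-j)^{\alpha-1}j^{-\alpha}=\int_0^1\phi=\pi/\sin\alpha\pi$. The only real obstacle here is this uniform control of the endpoint sums; the rest is the classical convergence of Riemann sums together with the Gamma function evaluation.
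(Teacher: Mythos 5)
Your proof is correct and follows the same core route as the paper's: both identify the sum as a Riemann sum for the Beta integral $\int_0^1 t^{-\alpha}(1-t)^{\alpha-1}\,dt=\Gamma(\alpha)\Gamma(1-\alpha)=\pi/\sin\alpha\pi$. The only difference is in how the sum--integral comparison is justified near the singular endpoints: the paper splits at the unique critical point $x=\alpha n$ and uses monotonicity on the two resulting pieces to get explicit $O(n^{-\alpha})+O(n^{\alpha-1})$ errors, whereas you truncate at $\delta n$ and $(1-\delta)n$, use uniform continuity in the middle, and bound the edge sums by $O(\delta^{1-\alpha})+O(\delta^{\alpha})$ uniformly in $n$; both are valid, and your treatment of the singularities is if anything the more carefully spelled out of the two.
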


\begin{proof}
  The function $(n-x)^{\alpha-1} x^{-\alpha}$ has one critical point at 
  $x=\alpha n$. Hence, by approximating integrals by Riemann sums on the intervals
  $[0,\alpha n]$ and $[\alpha n,n]$,
  \[
    \sum_{j=1}^{n-1}(n-j)^{\alpha-1}j^{-\alpha}
    =
    \int_0^n (n-x)^{\alpha-1}x^{-\alpha}\,dx + O(n^{-\alpha})+O(n^{\alpha-1}).
  \]
  Using standard properties of Beta and Gamma functions, we obtain
  \begin{align*}
    \int_0^n (n-x)^{\alpha-1}x^{-\alpha}\,dx 
    & = \int_0^1 (1-x)^{\alpha-1}x^{-\alpha}\,dx 
      =\operatorname{B}(\alpha,1-\alpha)
      =\Gamma(\alpha)\Gamma(1-\alpha)/\Gamma(1)
    \\ & =\Gamma(\alpha)\Gamma(1-\alpha)
         =\tfrac{\pi}{\sin\alpha\pi},
  \end{align*}
  as required.
\end{proof}

\begin{lemma} \label{prop:series-2}
  Let $\alpha\in(0,1)$. Suppose that $g:\N\to\R$ satisfies $\lim_{n\to\infty}g(n)=0$.
  Then
  $$\lim_{n\to\infty}\sum_{j=1}^{n-1}j^{-\alpha}g(j)\,(n-j)^{\alpha-1}=0$$ 
  and
  $\lim_{n\to\infty}\sum_{j=1}^{n-1}(n-j)^{\alpha-1}g(n-j)\,j^{-\alpha}=0$.
\end{lemma}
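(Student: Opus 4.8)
The plan is to establish both limits in Lemma~\ref{prop:series-2} by the same mechanism: split the summation range according to whether $j$ is small (where $g(j)$ need not be small but the weight $(n-j)^{\alpha-1}$ is of order $n^{\alpha-1}$) or large (where $g(j)$ is uniformly small and the remaining sum is controlled by Lemma~\ref{prop:series-1}). For the \emph{first} limit, fix $\eps>0$ and choose $N$ so that $|g(j)|\le\eps$ for $j\ge N$. For $n>2N$ write
\[
\sum_{j=1}^{n-1}j^{-\alpha}g(j)(n-j)^{\alpha-1}
=\sum_{j=1}^{N-1}j^{-\alpha}g(j)(n-j)^{\alpha-1}
+\sum_{j=N}^{n-1}j^{-\alpha}g(j)(n-j)^{\alpha-1}.
\]
In the first (finite) sum, each factor $(n-j)^{\alpha-1}\le(n-N)^{\alpha-1}\to0$ as $n\to\infty$ while $g$ is bounded on $\{1,\dots,N-1\}$, so this piece tends to $0$. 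In the second sum, bound $|g(j)|\le\eps$ and extend the range back to $j=1$ to get a bound $\eps\sum_{j=1}^{n-1}j^{-\alpha}(n-j)^{\alpha-1}$, which by Lemma~\ref{prop:series-1} converges to $\eps\,\pi/\sin\alpha\pi$. Hence the limsup of the absolute value of the whole sum is at most $\eps\,\pi/\sin\alpha\pi$, and since $\eps$ is arbitrary the limit is $0$.

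For the \emph{second} limit, substitute $i=n-j$ so that
\[
\sum_{j=1}^{n-1}(n-j)^{\alpha-1}g(n-j)\,j^{-\alpha}
=\sum_{i=1}^{n-1}i^{\alpha-1}g(i)\,(n-i)^{-\alpha},
\]
which is the ``mirror image'' of the first sum with the roles of the exponents $-\alpha$ and $\alpha-1$ swapped. The same two-range argument applies: split at a fixed $N$ with $|g(i)|\le\eps$ for $i\ge N$; the finite head has factors $(n-i)^{-\alpha}\le(n-N)^{-\alpha}\to0$ with $g$ bounded; the tail is dominated by $\eps\sum_{i=1}^{n-1}i^{\alpha-1}(n-i)^{-\alpha}$, and reindexing $j=n-i$ this is exactly $\eps\sum_{j=1}^{n-1}(n-j)^{\alpha-1}j^{-\alpha}\to\eps\,\pi/\sin\alpha\pi$ by Lemma~\ref{prop:series-1} again. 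Letting $\eps\to0$ finishes it.

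I do not anticipate a serious obstacle here; the only mild point to get right is the bookkeeping when splitting the range, namely ensuring $n$ is large enough (say $n>2N$) that the head range $\{1,\dots,N-1\}$ and the tail range $\{N,\dots,n-1\}$ are disjoint and that $(n-N)^{\alpha-1}$ (resp.\ $(n-N)^{-\alpha}$) genuinely goes to $0$, and being careful that extending the tail sum's range back down to $j=1$ only increases it since all summands are nonnegative. Everything else is a direct appeal to Lemma~\ref{prop:series-1} together with $g(n)\to0$.
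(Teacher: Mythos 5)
Your proof is correct and follows essentially the same two-range splitting strategy as the paper's. The only cosmetic difference is that you split at a fixed threshold $N$ (chosen from $g$) and control the tail by invoking Lemma~\ref{prop:series-1}, whereas the paper splits at $j=\eps n$ and estimates both ranges by direct elementary bounds without reusing Lemma~\ref{prop:series-1}; both routes give a limsup of order $\eps$ and the same reduction of the second limit to the first via $j\mapsto n-j$ (equivalently $\alpha\mapsto 1-\alpha$).
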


\begin{proof}
  Set $C=\max_n |g(n)|$.
  Let $\eps\in(0,\frac12)$.
  Then
  \begin{align*}
    \sum_{j=1}^{n-1}j^{-\alpha}g(j)\,(n-j)^{\alpha-1}
    &=
      \sum_{1\le j<\eps n}j^{-\alpha}g(j)(n-j)^{\alpha-1}
      +\sum_{\eps n\le j< n}j^{-\alpha}g(j)(n-j)^{\alpha-1}\\ 
    & \le
      C2^{1-\alpha} n^{\alpha-1}\sum_{1\le j<\eps n}j^{-\alpha}
      +(\eps n)^{-\alpha}\max_{k\ge \eps n}g(k)\sum_{\eps n\le j< n}(n-j)^{\alpha-1} \\ 
    & \lesssim
      n^{\alpha-1}(1+(\eps n)^{1-\alpha})
      +(\eps n)^{-\alpha}\max_{k\ge \eps n}g(k) n^{\alpha} \\ 
    & \lesssim
      n^{\alpha-1}+\eps^{1-\alpha}
      +\eps^{-\alpha}\max_{k\ge \eps n}g(k).
  \end{align*}
  Hence,
  \[
    \limsup_{n\to\infty}\sum_{j=1}^{n-1}j^{-\alpha}g(j)(n-j)^{\alpha-1}
    \lesssim \eps^{1-\alpha}.
  \]
  The first limit follows since $\eps$ is arbitrarily small.
  Replacing $\alpha$ by $1-\alpha$, we obtain the second limit.
\end{proof}

\begin{lemma}
  \label{lem:series-1-alpha-1}
  Suppose that $g(x)=o(1)$ as $x\to\infty$.
  Then $\sum_{j=1}^n \frac{g(j)}{j} =o(\log n)$.
\end{lemma}

\begin{proof}
  For $\eps>0$, choose $n_0\ge 1$ such that $|g(x)|<\eps$ for $x>n_0$.
  Then 
  \[
    \sum_{j=1}^n \frac{g(j)}{j}\le 
    \sum_{j=1}^{n_0} \frac{g(j)}{j}+
    \eps \sum_{j=n_0}^n \frac{1}{j}
    \le \sum_{j=1}^{n_0} \frac{g(j)}{j}+
    \eps \log n.
  \]
  Hence
  $\limsup_{n\to\infty}(\log n)^{-1}\sum_{j=1}^n \frac{g(j)}{j}\le\eps$.
\end{proof}

\begin{lemma}\label{lem:series-2-alpha-1}
  Let $g_1,g_2:[0,\infty)\to\R$ with $\lim_{x\to\infty} g_i(x)=1$ for $i=1,2$.
  Then
  \[
    \lim_{n\to\infty}\sum_{j=1}^{n-2} \frac{g_1(j)}{j}\frac{g_2(n-j)}{\log(n-j)}=1.
  \]
\end{lemma}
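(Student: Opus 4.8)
The plan is to reduce the statement to the case $g_1\equiv g_2\equiv 1$, in which the sum is $A_n:=\sum_{j=1}^{n-2}\frac1{j\log(n-j)}$, to prove $A_n\to1$ by an elementary splitting, and then to pass to general $g_i$ by a squeeze that uses a little room near $n$ and near the bottom of the sum.

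First I would prove $A_n\to1$. For the lower bound, since $\log(n-j)\le\log(n-1)<\log n$ throughout, $A_n\ge(\log n)^{-1}\sum_{j=1}^{n-2}j^{-1}=(\log n)^{-1}(\log n+O(1))\to1$. For the upper bound I would split at $j=\lfloor n/2\rfloor$. On $1\le j\le n/2$ one has $\log(n-j)\ge\log(n/2)$, so $\sum_{j\le n/2}\frac1{j\log(n-j)}\le(\log(n/2))^{-1}(\log(n/2)+O(1))\to1$. On $n/2<j\le n-2$ one has $j^{-1}<2/n$, and after the substitution $k=n-j$ the contribution is at most $\frac2n\sum_{k=2}^{\lceil n/2\rceil}(\log k)^{-1}$; invoking the elementary bound $\sum_{k=2}^m(\log k)^{-1}=O(m/\log m)$ (obtained by splitting at $\sqrt m$, beyond which the summand is $\le2/\log m$), this is $O(1/\log n)\to0$. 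Hence $\limsup A_n\le1$, so $A_n\to1$. The same two edge estimates show moreover that $B_n^{(N)}:=\sum_{j=N}^{n-N}\frac1{j\log(n-j)}\to1$ for each fixed $N$, since $A_n-B_n^{(N)}$ is a sum of $2N-3$ terms each of which tends to $0$.

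For general $g_i$, fix $\eta\in(0,1)$ and pick $N\ge2$ so that $|g_i(j)-1|<\eta$ for every integer $j\ge N$ and $i=1,2$; since each sequence $(g_i(j))_{j\ge1}$ converges, it is bounded, say by $M$. For $n>2N$ I would break $S_n:=\sum_{j=1}^{n-2}\frac{g_1(j)g_2(n-j)}{j\log(n-j)}$ into three ranges $1\le j\le N-1$, $N\le j\le n-N$, and $n-N+1\le j\le n-2$. On the first range $\frac{|g_1(j)|}{j}\le M$ while $\frac{|g_2(n-j)|}{\log(n-j)}\le M/\log(n/2)$, so this block is $O(1/\log n)$; on the third range, writing $k=n-j$ so that $k\in\{2,\dots,N-1\}$, the factor $\frac{|g_1(n-k)|}{n-k}$ is $O(1/n)$ while $\frac{|g_2(k)|}{\log k}$ stays bounded, so this block is $O(1/n)$; both vanish as $n\to\infty$. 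On the middle range both $g_1(j)$ and $g_2(n-j)$ lie in $(1-\eta,1+\eta)$, hence this block is squeezed between $(1-\eta)^2 B_n^{(N)}$ and $(1+\eta)^2 B_n^{(N)}$, and $B_n^{(N)}\to1$. Combining, $(1-\eta)^2\le\liminf S_n\le\limsup S_n\le(1+\eta)^2$, and letting $\eta\downarrow0$ gives $S_n\to1$.

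The one point that needs a little care is the large-$j$ tail of $A_n$ in the first step: the naive bound $\sum_{k=2}^m(\log k)^{-1}\le(m-1)/\log2$ is only $O(m)$, which does not beat the $2/n$ prefactor, so the sharper $O(m/\log m)$ estimate really is needed there. Everything else is bookkeeping with the harmonic-sum asymptotics $\sum_{j\le m}j^{-1}=\log m+O(1)$.
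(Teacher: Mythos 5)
Your proof is correct and follows essentially the same route as the paper: split off the range $j>n/2$ and kill it with the bound $\sum_{k\le m}(\log k)^{-1}=O(m/\log m)$, then squeeze the main range using the convergence of the $g_i$ and the harmonic-sum asymptotics. The only cosmetic difference is that you first establish the case $g_1\equiv g_2\equiv 1$ and then perturb via a three-range split, whereas the paper squeezes $g_2$ directly on $j\le n/2$ and absorbs $g_1$ using Lemma~\ref{lem:series-1-alpha-1}; both are valid and the bookkeeping is sound.
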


\begin{proof}
  Write 
  \[
    \sum_{j=1}^{n-2} \frac{g_1(j)}{j}\frac{g_2(n-j)}{\log(n-j)}=S_1+S_2
  \]
  where
  \[
    S_1=\sum_{1\le j\le n/2} \frac{g_1(j)}{j}\frac{g_2(n-j)}{\log(n-j)}, \qquad
    S_2=\sum_{n/2<j\le n-2} \frac{g_1(j)}{j}\frac{g_2(n-j)}{\log(n-j)}.
  \]

  Now, 
\begin{align*}
|S_2| & \le 
|g_1|_\infty|g_2|_\infty\frac{2}{n} \sum_{n/2\le j\le n-2}\frac{1}{\log (n-j)}
\\ & =|g_1|_\infty|g_2|_\infty\frac{2}{n} \sum_{2\le j\le n/2}\frac{1}{\log j}\le 
|g_1|_\infty|g_2|_\infty\frac{2}{n} \sum_{2\le j\le n}\frac{1}{\log j}. 
\end{align*}
  But 
  \[
    \sum_{2\le j\le n}\frac{1}{\log j}\lesssim \frac{n}{\log n}
    +\sum_{n/\log n\le j\le n}\frac{1}{\log j}
    \le \frac{n}{\log n}+\frac{1}{\log(n/\log n)}n\lesssim \frac{n}{\log n},
  \]
  so $|S_2|\lesssim \frac{1}{\log n}$.

  By Lemma~\ref{lem:series-1-alpha-1}, $\sum_{j=1}^n \frac{g_1(j)}{j}\sim\log n$. Hence,
  \[
    S_1\le \sup_{k\ge n/2}g_2(k)\frac{1}{\log n/2}\sum_{1\le j\le n/2}\frac{g_1(j)}{j}\sim \frac{1}{\log n/2}\log n/2 = 1.
  \]
  Similarly,
  \[
    S_1\ge \inf_{k\ge n/2}g_2(k)\frac{1}{\log n}\sum_{1\le j\le n/2}\frac{g_1(j)}{j}\sim \frac{1}{\log n}\log n/2 \sim 1.
  \]
  Hence $S_1+S_2\sim S_1\sim 1$ as required.
\end{proof}

\subsubsection*{Acknowledgment} We are very grateful to Stefano Luzzatto for important discussions that led to this paper and for the hospitality of ICTP where part of this research was carried out.

We are also greatly indebted to the referees for carefully checking the arguments and for numerous suggestions that improved the clarity of the paper.

\printbibliography

\end{document}